\newcommand{\nons}[1]{}
\newtheorem{teo}{Theorem}[section]
\newtheorem{defin}[teo]{Definition}
\newtheorem{prop}[teo]{Proposition}
\newtheorem{cor}[teo]{Corollary}
\newtheorem{lemma}[teo]{Lemma}
\newtheorem{quest}[teo]{Question}
\theoremstyle{definition}
\newtheorem{remark}[teo]{Remark}
\newtheoremstyle{dico}
 {\baselineskip}   
  {\topsep}   
  {}  
  {0pt}       
  {} 
  {.}         
  {5pt plus 1pt minus 1pt} 
  {}          
\theoremstyle{dico}
\numberwithin{equation}{section}
\newcommand{\Ga}{\Gamma}
\newcommand{\meno}{^{-1}}
\newcommand{\Zeta}{{\mathbb{Z}}}
\newcommand{\PP}{\mathbb{P}}
\newcommand{\proba}{\mathscr{P}}
\newcommand{\pb}{\proba(X)}
\newcommand{\misu}{\mathscr{M}}
\newcommand{\nuo}{{\nu_0}}
\newcommand{\convo}{E( \mu)}
\newcommand{\intec}{\Omega(\mu)}
\newcommand{\into} {\intec}
\newcommand{\liek}{\mathfrak{k}}
\newcommand{\supp}{\operatorname{supp}}
\newcommand{\spam}{\,\operatorname{span}\, }
\newcommand{\alfa}{\alpha}
\newcommand{\vacuo}{\varnothing}
\newcommand{\enf}{\emph}
\newcommand{\restr}[1]          {\vert_{#1}}
\newcommand{\sx}{\langle}
\newcommand{\xs}{\rangle}
\newcommand{\scalo}{\sx \cdot , \cdot \xs}
\newcommand{\Aut}{\operatorname{Aut}}
\newcommand{\Sl}{\operatorname{SL}}
\newcommand{\Gl}{\operatorname{GL}}
\newcommand{\PGL}{\operatorname{\PP GL}}
\newcommand{\End}{\operatorname{End}}
\newcommand{\lds}{\ldots}
\newcommand{\cds}{\cdots}
\newcommand{\cd}{\cdot}
\renewcommand{\setminus}{-}
\newcommand{\ra}{\rightarrow}
\newcommand{\lra}{\longrightarrow}
\newcommand{\C}{\mathbb{C}}
\newcommand{\de}{\partial}
\newcommand{\om}{\omega}
\newcommand{\eps}{\varepsilon}
\renewcommand{\phi}{\varphi}
\renewcommand{\Bigl}{\left}
\renewcommand{\Bigr}{\right}
\newcommand{\fun}{\mathfrak{F}}
\newcommand{\bly}{F}
\newcommand{\id}{\operatorname{id}}
\newcommand{\y}{Y}
\newcommand{\z}{Z}
\newcommand{\M}{X}
\newcommand{\ratto}{\dashrightarrow}
\newcommand{\mero}{\dashrightarrow}
\newcommand{\Mero}{\mathcal{M}}
\newcommand{\pr}{\pi} 
\newcommand{\mult}{\operatorname{mult}}
  \newcommand{\weak}{\rightharpoonup}
\newcommand{\fuj}{{\mathscr C}}
\newcommand{\au}{{{B}^0(X)}}
\newcommand{\aus}{{{B}(X)}}
\newcommand{\ai}{{\Aut^0(X)}}
\newcommand{\Autz}{\Aut^0}
\newcommand{\B}{{\partial \aus }}
\newcommand{\inde}{\operatorname{indet}}
\newcommand{\A}{\hat{A}}
\newcommand{\Alb}{\operatorname{Alb}}
\newcommand{\alb}{\operatorname{alb}}
\newcommand{\cicl}{{C}^{\operatorname{loc}}}
\newcommand{\cic}{{C}}
\newcommand{\Dou}{\mathscr{D}}
\newcommand{\Fu}{F}
\newcommand{\red}{{\operatorname{red}}}
\begin{document}

\title{Meromorphic limits of automorphisms}

\author{Leonardo Biliotti}

\author{Alessandro Ghigi}

\begin{abstract}
  Let $X$ be a compact complex manifold in the Fujiki class $\fuj$.
  We study the compactification of $\ai$ given by its closure in
  Barlet cycle space. The boundary points give rise to non-dominant
  meromorphic self-maps of $X$.  Moreover convergence in cycle space
  yields convergence of the corresponding meromorphic maps.  There are
  analogous compactifications for reductive subgroups acting trivially
  on $\operatorname{Alb} X$.  If $X$ is K\"ahler, these
  compactifications are projective.  Finally we give applications to
  the action of $\Aut(X)$ on the set of probability measures on $X$.
  In particular we obtain an extension of Furstenberg lemma to
  manifolds in the class $\fuj$.
\end{abstract}

\address{Universit\`{a} di Parma} \email{leonardo.biliotti@unipr.it}
\address{Universit\`a di Pavia}\email{alessandro.ghigi@unipv.it}

\thanks{Both authors were partially supported by FIRB 2012
    ``Geometria differenziale e teoria geometrica delle funzioni'' and
    by INdAM - GNSAGA.  The first author was also supported by MIUR
    PRIN 2015 ``Real and complex manifolds: geometry, topology and
    harmonic analysis''.  The second author was also supported by MIUR
    PRIN 2015 ``Moduli spaces and Lie Theory'' , by MIUR FFABR, by FAR
    2016 (Pavia) ``Variet\`a algebriche, calcolo algebrico, grafi
    orientati e topologici'' and by MIUR, Programma Dipartimenti di
    Eccellenza (2018-2022) - Dipartimento di Matematica
    ``F. Casorati'', Universit\`a degli Studi di Pavia.}

\keywords{Complex manifolds; automorphisms; meromorphic maps; cycle space; 
  probability measures.}

\subjclass[2010] {Primary 32M05; 
  Secondary 32M12} 

\maketitle

\tableofcontents

\section{Introduction}

Let $X$ be a compact complex manifold and assume that $\ai$, the
connected component of $\Aut(X)$ containing the identity, is not
trivial.  It is interesting to consider pointwise limits of sequences
$\{g_n\} $ in $\ai$.  Even more interesting is the fact that such
limits often exist!  We first met with this phenomenon in the case of
a rational homogeneous space $X=G/P$.  Fix an ample class on $X$ and a
Cartan involution $\theta$ on $G$. Call \emph{self-adjoint} the
elements $g\in G$ such that $\theta(g) = g\meno$. These elements form
a submanifold of $G$ diffeomorphic to the symmetric space $G/K$, where
$K=\mathrm{Fix}(\theta)$.  The ample class allows to fix a particular
Satake compactification of $G/K$.  One can prove that if a sequence
$\{g_n\}$ of self-adjoint elements converges in the Satake
compactification, then the maps $g_n : X \ra X$ converge almost
everywhere on $X$ (with respect to smooth Lebesgue measures).  The
limit map is a rational self-map of $X$ and one can describe it rather
explicitely, see \cite[\S 3.1]{bgAmer}.  In particular the pointwise
limit of the maps $g_n$ exists, it is holomorphic on a Zariski open
subset of $X$ and its image is contained in a proper subvariety of
$X$.

We later discovered that this phenomenon holds in greater generality.
Assume that $X$ is a K\"ahler manifold and that a compact connected
subgroup $K \subset \ai$ acts on $X$ in a Hamiltonian way, i.e. with a
momentum mapping.  If $\xi \in \liek$ and $x\in X$, then the limit
\begin{gather}
  \label{eq:2}
  \lim_{t\to+\infty} \exp (it\xi) \cd x
\end{gather}
always exists and defines a limit map, see
e.g. \cite[Prop. 5.18]{bgs}. This map is not continuous on the whole
manifold $X$, but its restriction to a Zariski open subset is
continuous and holomorphic \cite[\S 5.20]{bgs}.  If we set
$g_n (x) : = \exp (it_n \xi)\cd x$ for a sequence $\{t_n\}$ converging
to $ +\infty$, then we observe the same phenomenon as above: the
pointwise limit of $g_n$ exists and is holomorphic on a Zariski open
subset of $X$.  The proof of these facts relies heavily on the
Linearization Theorem proved in the papers \cite
{heinzner-huckleberry-Inventiones}, \cite{heinzner-loose}, \cite[\S
14]{heinzner-schwarz-Cartan}. As is well-known the flow $\exp(it \xi)$
in \eqref{eq:2} is a Morse-Bott flow.  It is interesting to notice
that using quite different methods one can make sense of the limit for
every Morse-Bott flow, see \cite{hl,latschev}.

In the present paper we study this phenomenon, that is the existence
of the limit, in full generality:
\begin{quest}
  \label{que}
  Let $X$ be a compact complex manifold and let $\{g_n\}$ be a
  sequence in $\ai$. For which $x\in X$ does the limit
  \begin{gather*}
    f(x):=\lim_{n\to \infty}  g_n \cd x
  \end{gather*}
  exist (up to passing to a subsequence)?  What is the structure of
  the set of such points?  What can be said about the limit map $f$?
\end{quest}

The basic idea of our approach is simply to replace a biholomorphism
of $X$ by its graph. This idea goes back at least to Douady
\cite{douady-modules} and is of course common in many areas of
mathematics. The graph of a biholomorphism is an analytic subvariety
of $X\times X$. Subvarieties can be considered either as ideal
sheaves, i.e. points in the Douady space (the Hilbert scheme in the
projective case), or as cycles, i.e. points in the Barlet cycle space
(the Chow scheme in the projective case).  For our purposes the choice
between these two approaches is not fundamental.

The manifolds for which we can answer the question above are those in
Fujiki class $\fuj$: this class contains by definition all the
manifolds that are meromorphic images of compact K\"ahler manifolds
(see Definition \ref{defclassc} below).  For these manifolds the
irreducible components of both Douady and cycle space are compact.
Let $\aus$ (respectively $\Fu(X)$) denote the irreducible component of
the diagonal in the cycle space $C_n(X\times X)$, where $n=\dim X$
(resp. in the Douady space of $X\times X$).  Thus $\aus$
(resp. $\Fu(X)$) is an analytic compactification of $\ai$.  Some
instances of this compactification have already been considered in the
literature. For example Brion \cite{brion-completions} has studied
$\aus$ in great detail in the case where $X$ is a rational homogeneous
space.  Using the compactness of $\aus$ we prove the following result,
which gives a rather complete answer to Question \ref{que} for $X$ in
the class $\fuj$ (see \S \ref{autx0}, especially Theorems
  \ref{nondominant} and \ref{convergenza}).
\begin{teo}
  \label{main1}
  Let $X \in \fuj$ and let $\{g_n\}$ be a divergent sequence in $\ai$.
  Up to passing to a subsequence there are a meromorphic map
  $f: X\mero X$ and a proper analytic subset $A\subset X$ such that
  \begin{enumerate}
  \item $f$ is defined outside $A$;
  \item $g_n \to f$ uniformly on compact subsets of $X-A$;
  \item \label{terzo} $f$ is not dominant, i.e. $f(X)$ is contained in
    a proper subvariety of $X$.
  \end{enumerate}
\end{teo}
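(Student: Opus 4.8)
The plan is to replace each automorphism by its graph and to pass to the limit in the cycle space, then to read off $f$, the set $A$, and the three properties from the resulting limit cycle. To each $g_n$ I associate its graph $\Gamma_n=\{(x,g_n(x)):x\in X\}\subset X\times X$, an irreducible $n$-cycle lying in $\aus\subset C_n(X\times X)$; recall that $\ai$ is identified with the open subset of $\aus$ parametrizing graphs of honest automorphisms, and that $\aus$ is compact since $X\in\fuj$. Hence, after passing to a subsequence, $\Gamma_n\to Z$ in the cycle space for some $Z\in\aus$. Because $\{g_n\}$ is divergent, $Z$ cannot be the graph of an automorphism, so $Z\in\B$.

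Next I would analyze the structure of the limit $Z=\sum_i m_i Z_i$. Each $g_n$ lies in the connected group $\ai$, so $\Gamma_n$ is homotopic to the diagonal and $[\Gamma_n]=[\Delta]$ in $H_{2n}(X\times X;\Zeta)$; since the fundamental class is locally constant and $\aus$ is irreducible, also $[Z]=[\Delta]$. Intersecting with a generic fibre $\{x\}\times X$ gives $[Z]\cdot[\{x\}\times X]=[\Delta]\cdot[\{x\}\times X]=1$, and as $Z$ is effective this forces a unique component $Z_0$, of multiplicity one, meeting the generic fibre: thus $p_1|_{Z_0}\colon Z_0\to X$ is bimeromorphic, while $p_1(Z_i)\subsetneq X$ for every $i\ge 1$. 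I then define $f:=p_2\circ(p_1|_{Z_0})^{-1}\colon X\mero X$ and let $A$ be the union of the indeterminacy locus of $f$, of the locus where $p_1|_{Z_0}$ fails to be a local biholomorphism, and of the proper subvarieties $p_1(Z_i)$ for $i\ge1$. This $A$ is a proper analytic subset on whose complement $f$ is defined, which gives assertion (1).

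For assertion (2) I would use that convergence in the cycle space forces Hausdorff convergence of the supports over compact subsets of $X-A$ (this is the content I would isolate as Theorem \ref{convergenza}). Over a point $x\in X-A$ the fibre of $\Gamma_n$ is the single point $(x,g_n(x))$, and no vertical component $Z_i$ lies above $X-A$, so the fibre of $Z$ above $x$ is exactly $(x,f(x))$; the Hausdorff convergence of the supports then yields $g_n\to f$ uniformly on compact subsets of $X-A$.

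The main obstacle is assertion (3), the non-dominance, and I would argue it by contradiction. Suppose $f$ is dominant. Intersecting $Z$ with a generic fibre $X\times\{y\}$ gives, by the same homological computation, total $p_2$-degree $1$; since $Z_0$ dominates via $p_2$ when $f$ is dominant, it must absorb all of this degree, so $p_2|_{Z_0}$ is bimeromorphic as well and $f$ is a bimeromorphic self-map. Applying assertion (2) to $\{g_n^{-1}\}$, whose graphs are the images of the $\Gamma_n$ under the involution exchanging the two factors, exhibits $f^{-1}$ as the corresponding limit, so $f$ and $f^{-1}$ are mutually inverse bimeromorphic maps realized as locally uniform limits of biholomorphisms. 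The crux is then to upgrade this to a genuine automorphism: a Hurwitz--type normal-families argument should show that a dominant locally uniform limit of biholomorphisms is an open embedding without indeterminacy, so that $Z=Z_0$ is the graph of an automorphism; then $Z$ lies in the open part $\ai$ of $\aus$, whence $\Gamma_n\to Z$ means $g_n\to f$ in $\ai$, contradicting divergence. I expect this final upgrading --- simultaneously ruling out the extra vertical components of $Z$ and promoting ``bimeromorphic'' to ``biholomorphic'' --- to be the delicate point, and the reason a dedicated argument (Theorem \ref{nondominant}) is required.
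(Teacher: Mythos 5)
Your treatment of assertions (1) and (2) follows the paper's own route: graphs in $C_n(X\times X)$, compactness of $\aus$ for $X\in\fuj$, the intersection computation $[Z]\cd[\{x\}\times X]=[\Delta]\cd[\{x\}\times X]=1$ forcing a unique multiplicity-one component $Z_0$ dominating the first factor (Lemmas \ref{interz} and \ref{cavallo}), and uniform convergence off $A$ read off from the universal family over the open locus of single-point fibres (Theorem \ref{convergenza}). Up to that point the proposal is sound.

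The gap is in assertion (3). Your contradiction argument reduces non-dominance to the claim that a dominant limit, together with its inverse limit, must be an honest automorphism exhausting the whole limit cycle, and you defer this to a ``Hurwitz-type normal-families argument''; that argument is not supplied and does not close the gap as stated. A several-variables Hurwitz theorem gives at most that $f$ is injective on $X-A$. This does not exclude indeterminacy of $f$ on $A$ (the standard quadratic Cremona involution of $\PP^2$ is injective off three lines); it does not by itself exclude the extra vertical components $Z_i$ of the limit cycle (for that you would need positivity of homology classes of nonzero effective cycles in $X\times X$, immediate in the K\"ahler case but requiring the Campana--Fujiki machinery for general $X\in\fuj$); and it does not show that the limiting automorphism, if it exists, lies in $\ai$, which is what you need for $Z\in\au$ to contradict divergence. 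The paper proves (3) by a completely different and softer mechanism (Theorem \ref{nondominant}): the graph map $j$ is equivariant for the action $g\cd\Ga=(\id_X\times g)_*\Ga$ of $\ai$ on $C_n(X\times X)$, $\au$ is a single open orbit, and $\B=\aus-\au$ is a proper analytic invariant subset of the irreducible space $\aus$; hence every orbit in $\B$ has dimension strictly less than $\dim\ai$, every $b\in\B$ has a positive-dimensional stabilizer $\ai_b$, and the relation $h\cd(x,f_b(x))=(x,hf_b(x))\in Z_b$ for $h\in\ai_b$ forces $f_b(X)\subset X^{\ai_b}$, a proper analytic subset. You should either adopt this orbit-dimension argument or supply the missing positivity and extension steps; as written, (3) is not proved.
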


An example of complex manifold not in the class $\fuj$ is provided by
Hopf manifolds \cite{ueno-class-C}. We are able to show that for such
manifolds our result fails, see Remark \ref{ciuccia}.

In \S \ref{ridut} we consider reductive subgroups of $\ai$. We recall
several results from Fujiki's fundamental paper
\cite{fujiki-automorphisms}.  Fujiki used $\Fu(X)$ instead of $\aus$.
We explain that they are equivalent for our purposes.  It follows that
for every connected complex reductive subgroup $G\subset \ai$ that
acts trivially on $\Alb X$, the closure $\overline{G} \subset \aus$ is
analytic. (The corresponding statement in $\Fu(X)$ was proved by
Fujiki.)  This allows to refine (\ref{terzo}) in Theorem \ref{main1}:
if the sequence $\{g_n\}$ lies in $ G$, then $f (X)$ is contained in
the fixed set of a positive-dimensional subgroup of $G$.

The compactification of a reductive $G \subset \ai$ obtained in this
way is quite interesting in its own. If $X$ is K\"ahler we are able to
prove the following (see Theorem \ref{proj}).
\begin{teo}
  \label{main2}
  If $X$ is a K\"ahler manifold and $G \subset \ai$ is a connected
  complex reductive subgroup, that acts trivially on $\Alb X$, then
  the closure of ${G}$ inside $\aus$ is a projective variety.
\end{teo}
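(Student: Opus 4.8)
The plan is to show that the closure $\overline{G} \subset \aus$ is a projective variety by exhibiting it as a closed subvariety of a projective space. Since we are told in the excerpt that this closure is analytic (from the discussion in \S\ref{ridut}, following Fujiki), the object $\overline{G}$ is already a compact complex analytic variety. The entire content of the theorem therefore reduces to the Moishezon-versus-projective dichotomy: a compact complex variety in the Fujiki class $\fuj$ is projective precisely when it carries a Kähler class that is moreover integral (or, via Moishezon's theorem, when it is both Moishezon and Kähler). So my first step would be to recall that $\aus \subset C_n(X\times X)$ sits inside the cycle space of a Kähler manifold (here $X\times X$ is Kähler because $X$ is), and to invoke the fundamental fact that irreducible components of the cycle space of a compact Kähler manifold are themselves compact Kähler spaces — indeed projective when the cycles have the right dimension. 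This is the Fujiki–Lieberman–Barlet circle of results on the Kähler structure of Barlet cycle spaces.

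Concretely, I would proceed as follows. First, observe that $X$ Kähler implies $X\times X$ Kähler, so fix a Kähler class $[\omega]$ on $X\times X$. Second, equip the cycle space $C_n(X\times X)$ with the induced structure: the key classical input is that for a compact Kähler manifold $M$, each irreducible component of $C_p(M)$ is a compact Kähler space, and when it parametrizes cycles whose class is rigid enough it is in fact projective. The cleanest route is to use that $\overline{G}$ is a compact \emph{Moishezon} variety — because $\ai$ is an algebraic group acting on $X$ and $\overline{G}$ is a compactification of the reductive group $G$, so it is bimeromorphic to a projective variety — and simultaneously Kähler, as a subvariety of the Kähler cycle space. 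By Moishezon's theorem, a compact complex variety that is both Moishezon and Kähler (more precisely, admits a Kähler metric, in the singular sense of Moishezon/class $\fuj$) is projective. Thus the two properties together force projectivity.

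The step I expect to be the \textbf{main obstacle} is verifying that $\overline{G}$ genuinely inherits a Kähler structure from the cycle space in a way that is compatible with its Moishezon-ness, and handling the singularities of $\overline{G}$ rigorously. The cycle space is in general only a reduced complex space, and "Kähler" for singular spaces must be interpreted in the sense of Fujiki (class $\fuj$) or via a Kähler form on a desingularization; one must check that the pairing of a Kähler class on $X\times X$ with the universal family over $\overline{G}$ produces a genuine Kähler class, not merely a positive but possibly degenerate class, on $\overline{G}$. Lieberman's construction gives a canonical smooth positive $(1,1)$-form on the smooth locus, and the real work is in controlling its behavior across the singular set and confirming it represents a Kähler class on a resolution. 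Once this Kähler class is in hand and shown to be compatible with the Moishezon structure, projectivity follows from the standard characterization.

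I would organize the final write-up as: (i) reduce to showing $\overline{G}$ is Kähler, citing that it is already analytic and Moishezon; (ii) recall the Kählerianity of cycle spaces of compact Kähler manifolds (Fujiki--Lieberman); (iii) transport a Kähler class from $X\times X$ to $\overline{G}$ via the incidence/universal cycle and the fiber-integration (slant product) construction; (iv) conclude via Moishezon's theorem that a Moishezon Kähler variety is projective. The crux, again, is step (iii) combined with controlling singularities in step (iv).
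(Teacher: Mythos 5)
Your overall skeleton --- $\overline{G}$ is analytic in $\aus$, $\aus$ is K\"ahler by Varouchas, and a compact space that is both Moishezon and K\"ahler is projective --- matches the paper's strategy. But there is a genuine gap at the step you dispatch in one clause: you assert that $\overline{G}$ is Moishezon ``because $\ai$ is an algebraic group acting on $X$ and $\overline{G}$ is a compactification of the reductive group $G$, so it is bimeromorphic to a projective variety.'' This is not a valid inference. First, $\ai$ need not be algebraic for a general compact K\"ahler $X$ (only its linear part $L(X)$ is); more importantly, even granting that $G$ itself is an affine algebraic group, the closure of $G$ in the cycle space is one particular compactification, and a compact complex space containing a quasi-projective variety as a dense Zariski-open subset is not automatically Moishezon --- the transcendence degree of its meromorphic function field could a priori drop. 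Establishing Moishezon-ness is exactly where the paper does its real work, and it is also where the hypothesis that $G$ acts trivially on $\Alb X$ is used beyond merely guaranteeing analyticity of $\overline{G}$.

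Concretely, the paper takes a $G$-equivariant resolution $\pi: Z \to \overline{G}$, notes $Z$ is a compact K\"ahler $G$-almost homogeneous manifold, and then must show $G$ acts trivially on $\Alb(Z)$: the semisimple part acts trivially for free, while for each $\C^*$-factor one uses that it is a meromorphic subgroup (via $G\subset L(X)$, Fujiki's results, and Propositions \ref{bimer} and \ref{fubamero}) so that its orbit closures in $Z$ are analytic, hence contain fixed points, whence triviality on $\Alb Z$ by Sommese. Only then do the theorems of Oeljeklaus and Huckleberry--Wurzbacher give $b_1(Z)=0$ and $Z$ projective, so that $\overline{G}$ is Moishezon. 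You would need to supply this entire chain (or an equivalent argument) to close your proof. Incidentally, you locate the ``main obstacle'' in transporting the K\"ahler class to $\overline{G}$ and controlling singularities, but the paper handles that with a direct citation to Varouchas's theorem that $\aus$ is a K\"ahler space; the genuinely hard part is the one you treated as obvious.
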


In \S \ref{misure} we apply Theorem \ref{main1} to study the action of
$\ai$ on the set of probability measures on $X$.  A famous lemma due
to Furstenberg \cite{furstenberg-Borel}, which is used in the proof of
Borel density theorem, says (among other things) that a measure on
$\PP^n$ whose stabilizer in $\operatorname{PGL}(n+1,\C)$ is
non-compact, is supported on a union of proper linear subspaces.  The
previous results allow to generalize this to any manifold in $\fuj$: a
measure on $X$ with non-compact stabilizer in $\ai$ is supported on a
proper analytic subset (see Theorem \ref{fursty-analitico}).

Finally in Theorem \ref {propria} we give an application of the
results obtained in the paper to the map $F_\nu$, originally
introduced by Bourguignon, Li and Yau \cite{bly} and studied in
\cite{bgAmer,bgs}.  We are able to give a much shorter proof of one of
the main results in \cite{bgs}, although in a slightly less general
setting.

{\bfseries \noindent{Acknowledgements}}.  The authors would like to
thank Professor Barlet for helping with cycle space, Professor Pirola
for interesting discussions and Professor Dolgachev for turning their
attention to the important paper \cite{neretin}.

\section{Notation and  preliminaries}

We start by recalling the basic definitions on meromorphic maps and
some elementary lemmata needed in the paper. See
\cite{barlet-magnusson-vol-1,fritzsche-grauert,grauert-remmert-cas,noguchi-ochiai}
for more details.

\begin{defin}
  Let $X$ and $Y$ be reduced complex spaces. A map $\tau : X \ra Y$ is
  a \enf{proper modification} if it is proper and there is an analytic
  subset $T \subset Y$ with empty interior such that
  \begin{enumerate}
  \item $\tau\meno(T)$ has empty interior and
  \item the restriction of $\tau$ to $X -\tau\meno(T) $ is a
    biholomorphism onto $Y-T$.
  \end{enumerate}
  The \enf{center} of $\tau$ is the intersection of all the analytic
  subset $T\subset Y$ satisfying the above condition. The
  \enf{exceptional set} of $\tau$ is the inverse image of the
  center. 
\end{defin}

\begin{defin}
  \label{merodef}
  Let $X$ and $Y$ be reduced complex spaces. A \enf{meromorphic map}
  of $X$ in $Y$ is an analytic subset $G$ of $X\times Y$ such that
  $p:=\pr_1\restr{G} : G \ra X$ is a proper modification.  If
  $S \subset X$ is the center of $p$ then
  $f:=\pr_2 \circ p\meno : X-S \ra Y$ is a holomorphic map. We write
  $f : X \ratto Y$. The set $G$ is called the \emph{graph} of $f$
  and it is denoted by $\Ga_f$.  The \enf{image} of $f$ is
  $\pr_2(G) \subset Y$.  The meromorphic map $f$ is \enf{surjective}
  if $\pr_2(G)=Y$.  The center of $p$ is called the \enf{set of
    indeterminacy} of $f$, denoted $\inde (f)$, and its complement is
  called the \enf{domain of definition} of $f$. We say that $f$ is
  defined at $x \in X$ if $x$ lies in the domain of definition.
\end{defin}

\begin{remark}
  If $\tau : X \ra Y$ is a proper modification and $Y$ is irreducible,
  then also $X$ is irreducible.  In fact $Y -T$ is irreducible and so
  is $X -\tau\meno(T)$. Moreover $X - \tau\meno(T)$ is dense in $X$.
  As a corollary, if $f: X \ratto Y$ is a meromorphic map with graph
  $G$, and $X$ irreducible, then $ G$ is irreducible.
\end{remark}

\begin{lemma}
\label{lemma-1}
Let $X$ and $Y$ be reduced and irreducible compact analytic spaces.
Let $f : X \ratto Y$ be a meromorphic map with graph $G$ and set of
indeterminacy $S \subset X$.  Then $G$ is the closure of the graph of
$f: X-S \ra Y$.
\end{lemma}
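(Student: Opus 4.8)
The plan is to realise the graph of the holomorphic restriction as a dense subset of $G$, so that $G$, being analytic and hence closed, must coincide with its closure. Write $U := X - S$ for the domain of definition of $f$ and set $\Ga_0 := \{(x, f(x)) : x \in U\} \subset X \times Y$, the graph of $f\restr{U} : U \ra Y$. Two things have to be established: that $\Ga_0 \subseteq G$, and that $\Ga_0$ is dense in $G$.

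For the inclusion I would simply unwind Definition \ref{merodef}. There $p = \pr_1\restr{G}$ is a proper modification with center $S$, which means precisely that $p$ restricts to a biholomorphism of $p\meno(U)$ onto $U$ and that $f = \pr_2 \circ p\meno$ on $U$. Consequently, for $x \in U$ the point $p\meno(x) \in G$ has first coordinate $p(p\meno(x)) = x$ and second coordinate $\pr_2(p\meno(x)) = f(x)$; that is, $p\meno(x) = (x, f(x))$. This single identity shows at once that $(x, f(x)) \in G$ for every $x \in U$, hence $\Ga_0 \subseteq G$, and moreover that $\Ga_0 = p\meno(U)$.

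It remains to prove density. Since $S$ is analytic, $p\meno(S)$ is an analytic subset of $G$ and $\Ga_0 = p\meno(U) = G - p\meno(S)$. This subset is proper: $p$ maps $p\meno(U)$ onto $U$, which is nonempty because the center of a proper modification has empty interior, so $p(G) \not\subseteq S$ and therefore $p\meno(S) \neq G$. Because $X$ is irreducible, the Remark preceding this lemma guarantees that $G$ is irreducible as well; and in an irreducible complex space the complement of a proper analytic subset is dense. Hence $\overline{\Ga_0} = G$, the closure being the same whether taken in $G$ or in $X \times Y$, since $G$ is closed.

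The only step that genuinely requires the structure theory recalled above, rather than formal manipulation, is the identification $\Ga_0 = p\meno(U)$: it rests on the fact that $p$ is biholomorphic over the complement of the center $S$ \emph{itself}, and not merely over the complement of one of the larger sets $T$ occurring in the definition of a proper modification. This is exactly what makes $f = \pr_2 \circ p\meno$ well defined on all of $U$, and it is built into Definition \ref{merodef}. Once it is in hand---equivalently, once one knows that $p\meno(S)$ is a proper, indeed nowhere dense, analytic subset of the irreducible graph $G$---the conclusion is immediate.
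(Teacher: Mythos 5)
Your proof is correct and follows essentially the same route as the paper's: identify the graph of $f\restr{X-S}$ with the Zariski-open subset $G - p\meno(S) = G - (S\times Y)$ of $G$, invoke the preceding remark for irreducibility of $G$, and conclude density of the complement of a proper analytic subset. The only difference is expository — you spell out the identification $p\meno(x) = (x, f(x))$ that the paper leaves implicit in Definition \ref{merodef}.
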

\begin{proof}
  Since $f: X-S \ra Y$ is a holomorphic map, its graph $\Ga_f$ is an
  analytic subset of $(X-S)\times Y$ and it is biholomorphic to $X-S$.
  By the definition of meromorphic map we have
  $\Ga_f = G - (S \times Y)$.  Therefore $\Ga_f$ is Zariski open in
  $G$. By the previous remark $G$ is irreducible, so $\Ga_f$ is dense
  in $G$ for the Hausdorff topology.
\end{proof}

\begin{lemma}
  \label{meromorfason}
  If $X$ and $Y$ are reduced and irreducible compact analytic spaces
  and $S\subset X$ is a proper analytic subset, a holomoprhic map
  $f: X -S \ra Y$ is meromorphic if and only if the closure of its
  graph is an analytic subset of $X\times Y$.
\end{lemma}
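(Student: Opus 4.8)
The plan is to prove the two implications separately, reading ``$f$ is meromorphic'' as: $f$ is the restriction to $X-S$ of a meromorphic map $F:X\ratto Y$ in the sense of Definition \ref{merodef}. For the easy direction ($\Rightarrow$), I would start from such an $F$, whose graph $G\subset X\times Y$ is analytic by definition. Since $F$ is defined on all of $X-S$, its indeterminacy satisfies $\inde(F)\subseteq S$, so $X-S$ is a Zariski-open dense subset of the domain of definition of $F$; over $X-S$ the modification $\pr_1\restr{G}$ is single-sheeted, whence $\Ga_f = G\cap \pr_1\meno(X-S)$ is Zariski-open in $G$. The space $G$ is irreducible by the Remark preceding Lemma \ref{lemma-1}, so $\Ga_f$ is dense in $G$ for the Hausdorff topology and $\overline{\Ga_f}=G$ is analytic.

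For the converse ($\Leftarrow$), set $G:=\overline{\Ga_f}$ and assume it is analytic; the goal is to produce a meromorphic extension of $f$ with graph $G$. First, $\Ga_f$ is biholomorphic to $X-S$, hence irreducible (the complement of a proper analytic subset in an irreducible space is irreducible), so its closure $G$ is irreducible. Since $X$ and $Y$ are compact, $X\times Y$ is compact, the analytic subset $G$ is compact, and therefore $p:=\pr_1\restr{G}:G\ra X$ is proper. To exhibit $p$ as a proper modification I would take $T=S$ and verify the two conditions: the set $S$ has empty interior, and I claim $G\cap \pr_1\meno(X-S)=\Ga_f$. Granting the claim, $p$ restricts to a biholomorphism from $\Ga_f=G-p\meno(S)$ onto $X-S$, while $p\meno(S)=G-\Ga_f$ has empty interior because $\Ga_f$ is dense in the irreducible $G$. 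Thus $p$ is a proper modification, so $G$ is the graph of a meromorphic map $F:X\ratto Y$ with $F\restr{X-S}=\pr_2\circ p\meno = f$, proving $f$ meromorphic.

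The routine points are the compactness and hence properness of $p$ and the density arguments; the only step requiring care, and the main obstacle, is the claim $G\cap \pr_1\meno(X-S)=\Ga_f$, namely that passing to the closure creates no extra sheets of $G$ over points where $f$ is already holomorphic. This is where I would use continuity of $f$ on the open set $X-S$: any $(x,y)\in G$ is a limit of points $(x_n,f(x_n))\in\Ga_f$, and if $x\in X-S$ then $x_n\to x$ eventually inside $X-S$, so $y=\lim f(x_n)=f(x)$. This is precisely what guarantees that the center of the resulting proper modification lands inside $S$ rather than somewhere larger, and thus that the meromorphic map genuinely extends $f$.
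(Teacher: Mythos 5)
Your argument is correct and follows essentially the same route as the paper's: the necessity is Lemma \ref{lemma-1}, and for sufficiency one uses compactness of $G=\overline{\Gamma_f}$ to get properness of $p=\pi_1|_G$, irreducibility of $G$ from that of $X-S$, and nowhere-density of $p^{-1}(S)$ to conclude that $p$ is a proper modification. Your explicit check that $G\cap\bigl((X-S)\times Y\bigr)=\Gamma_f$ via continuity of $f$ is left implicit in the paper's proof, but it is the same argument in substance.
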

\begin{proof}
  We already proved that the condition is necessary. To prove that it
  is sufficient, assume that $G:=\overline{\Ga_f}$ is analytic in
  $X\times Y$.  Since $G$ is compact the map $p: = \pi_1 \restr{G}$ is
  proper.  Moreover $\pr_1(G) = X$, since $\pr_1(G)$ is compact and
  contains $X-S$.  Since $X$ is irreducible, also $\Ga_f$ and $G$ are
  irreducible.  Finally $p\meno(S) = G\cap (S\times X)$ is a proper
  analytic subset of $G$, so it is nowhere dense. We have proved that
  $p: G \ra X$ is a proper modification.
\end{proof}

\begin{lemma}
  \label{chiusura}
  Let $X$ and $Y$ be reduced and irreducible compact analytic spaces
  and let $f : X \mero Y$ be a meromorphic map. Let $A\subset X$ be a
  proper analytic subset containing $\inde(f)$.  If $W \subset X$ is
  an irreducible analytic subset which is not contained in $A$, then
  $f(W - A)$ has analytic closure in $Y$.
\end{lemma}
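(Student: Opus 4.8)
The plan is to pass to the graph and take a strict transform. Write $G = \Ga_f \subset X\times Y$ for the graph of $f$, let $p := \pr_1\restr{G} : G \ra X$ be the associated proper modification, and recall that $S := \inde(f)$ is its center, so that $A \supseteq S$ and $p$ restricts to a biholomorphism over $X-S \supseteq X-A$. Since $W$ is irreducible and $W \not\subseteq A$, the set $W - A = W - (W\cap A)$ is a nonempty Zariski open, hence dense and irreducible, subset of $W$; as it lies in $X-S$, its preimage $p\meno(W-A)$ is carried biholomorphically onto it by $p$ and is therefore irreducible, of dimension $\dim W$. The key object is the closure $\tilde W := \overline{p\meno(W-A)}$ taken in $G$ (equivalently in $X\times Y$, since $G$ is closed). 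Because $f(W-A) = \pr_2(p\meno(W-A))$, I expect $\overline{f(W-A)} = \pr_2(\tilde W)$, so the whole point is to recognize $\tilde W$ as an analytic set and then push it forward.

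The technical heart is showing that $\tilde W$ is analytic, and I would do this by identifying it with an irreducible component of $G \cap (W\times Y) = p\meno(W)$. Note that $p\meno(W)$ is analytic in $G$ and that $p\meno(A)$ is a closed analytic subset, so on the open set $U := G - p\meno(A)$ one has $p\meno(W-A) = p\meno(W)\cap U$. Choosing an irreducible component $Z$ of $p\meno(W)$ that contains the irreducible set $p\meno(W-A)$, I would check that $Z\cap U = p\meno(W)\cap U = p\meno(W-A)$: the inclusion $Z\cap U \subseteq p\meno(W)\cap U$ is immediate from $Z\subseteq p\meno(W)$, while $p\meno(W-A)\subseteq Z$ together with $p\meno(W-A)\subseteq U$ gives the reverse inclusion. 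Since $Z\cap U$ is then a nonempty Zariski open, hence dense, subset of the irreducible $Z$, taking closures in $G$ yields $\tilde W = \overline{Z\cap U} = Z$. Thus $\tilde W$ is an irreducible component of the analytic set $p\meno(W)$, in particular analytic and compact.

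It then remains to conclude by properness. The projection $\pr_2 : X\times Y \ra Y$ is proper, as $X$ is compact, so its restriction to the compact analytic set $\tilde W$ is proper, and Remmert's proper mapping theorem gives that $\pr_2(\tilde W)$ is an analytic subset of $Y$. Finally I would verify $\pr_2(\tilde W) = \overline{f(W-A)}$: the inclusion $\pr_2(\tilde W) \supseteq \pr_2(p\meno(W-A)) = f(W-A)$ together with the closedness of $\pr_2(\tilde W)$ gives $\pr_2(\tilde W)\supseteq \overline{f(W-A)}$, while continuity of $\pr_2$ gives $\pr_2(\tilde W) = \pr_2(\overline{p\meno(W-A)}) \subseteq \overline{\pr_2(p\meno(W-A))} = \overline{f(W-A)}$. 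Hence $\overline{f(W-A)} = \pr_2(\tilde W)$ is analytic. The only delicate step is the identification $\tilde W = Z$, which hinges on the irreducibility of $p\meno(W-A)$; everything else is a routine application of the proper mapping theorem.
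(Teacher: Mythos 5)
Your proof is correct and follows essentially the same route as the paper's: both pass to the graph, observe that $p\meno(W-A)\cong W-A$ is irreducible and hence lies in (and is dense in) a single irreducible component of $p\meno(W)$, and then push that component forward by $\pr_2$ via Remmert's proper mapping theorem. The only cosmetic difference is that you select the component as the one containing $p\meno(W-A)$ directly, while the paper first picks the component $Z_1$ with $\pi_1(Z_1)=W$ and then identifies it as that same component.
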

\begin{proof}
  Let $G \subset X\times Y$ be the graph of $f$ and let $\pi_1, \pi_2$
  be the restrictions of the projections:
  \begin{equation*}
    \begin{tikzcd}
      & G \arrow{dl}[swap]{\pi_1}  \arrow{dr}{\pi_2}   &   \\
      X \arrow[dashed]{rr}{f}& & Y.
    \end{tikzcd}
  \end{equation*}
  Let $\pi_1\meno (W) = Z_1 \cup \cds \cup Z_r$ be the decomposition
  in irreducible components.  Since $W$ is irreducible, we can assume
  $\pi_1(Z_1) = W$.  We claim that $\pi_2 (Z_1) = \overline{f(W-A)}$.
  Indeed since $W$ is irreducible, $W-A$ is also irreducible. Since
  $\pi_1$ is a biholomorphism over $X-A$, also
  $\pi_1 \meno(W-A) \cong W-A$ is irreducible.  Hence it is contained
  in a unique irreducible component of $\pi_1\meno(W)$, which is
  necessarily $Z_1$.  This shows that
  $\pi_1\meno(W-A) \subset Z_1 - \pi_1\meno(A)$.  The opposite
  inequality being obvious, we get
  $\pi_1\meno(W-A) = Z_1 - \pi_1\meno(A)$.  Since $Z_1$ is
  irreducible, $\pi_1\meno(W-A) $ is dense in $ Z_1$.  So
  $f (W-A) = \pi_2 \pi_1\meno(W-A) = \pi_2 ( Z_1 - \pi_1\meno(A))$ is
  dense in $\pi_2(Z)$. This means that the closure of $f(W-A)$ is the
  set $\pi_2(Z_1)$, which is analytic by Remmert Proper Mapping
  Theorem.
\end{proof}

\begin{lemma}
  \label{bimeromorfi} Let $X$ and $Y$ be reduced and irreducible
  compact analytic spaces and let $f : X \ra Y$ be a holomorphic map.
  Let $B\subset Y$ be a proper analytic subset such that for any
  $y\in Y - B$, the fibre $f\meno(y)$ consists of a single point.
  Then $f$ is a bimeromorphic map.
\end{lemma}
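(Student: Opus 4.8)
The plan is to prove that, after deleting suitable proper analytic subsets from $X$ and $Y$, the map $f$ becomes a genuine biholomorphism, and then to repackage this as a bimeromorphic map by producing a meromorphic inverse via Lemma \ref{meromorfason}. First I would record that $f$ is surjective. Since $X$ is compact, $f$ is proper and $f(X)$ is analytic in $Y$ by Remmert's proper mapping theorem, hence closed. For every $y \in Y - B$ the fibre $f\meno(y)$ is a single point, so it is nonempty and $y \in f(X)$; thus $Y - B \subseteq f(X)$. As $Y$ is irreducible and $B$ is proper analytic, $Y - B$ is dense, and therefore $f(X) = Y$.

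The key step is to upgrade generic injectivity to a biholomorphism. Set $U := Y - (B \cup Y_{\sing})$, where $Y_{\sing}$ is the singular locus of $Y$; this is a dense Zariski-open subset, and it is smooth, hence normal. Over $U$ the map restricts to $f : f\meno(U) \ra U$, which is proper (the preimage of a compact subset of $U$ is closed in the compact space $X$), surjective (because $f(X) = Y$), and injective (the fibres over points of $U \subseteq Y - B$ are singletons). By the standard fact that a proper holomorphic bijection onto a normal complex space is a biholomorphism, $f : f\meno(U) \ra U$ is a biholomorphism. Here $f\meno(B \cup Y_{\sing})$ is a proper analytic subset of $X$: it is analytic because $f$ is holomorphic, and it is proper because equality with $X$ would force $B \cup Y_{\sing} \supseteq f(X) = Y$.

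I would then produce the meromorphic inverse. Let $g := (f\restr{f\meno(U)})\meno : U \ra X$, a holomorphic map, and consider the graph of $f$ with the factors interchanged, $\Ga' := \{(f(x), x) : x \in X\} \subset Y \times X$. This is an analytic subset, and through the second projection it is biholomorphic to $X$, hence irreducible and compact. The graph $\Ga_g = \{(y, g(y)) : y \in U\}$ coincides with $\Ga' \cap (U \times X)$, which is a nonempty Zariski-open subset of the irreducible set $\Ga'$, and so is dense in $\Ga'$ for the Hausdorff topology. Consequently $\overline{\Ga_g} = \Ga'$ is analytic, and Lemma \ref{meromorfason} (with the roles of $X$ and $Y$ exchanged and $S = B \cup Y_{\sing}$) shows that $g : Y \mero X$ is meromorphic. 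Since $f \circ g = \id$ on $U$ and $g \circ f = \id$ on $f\meno(U)$, the map $g$ is a meromorphic inverse of $f$, so $f$ is bimeromorphic.

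The main obstacle is precisely the passage from generic bijectivity to a biholomorphism over $U$. This is where normality of the target is indispensable: a proper holomorphic bijection onto a non-normal space, such as a normalization, need not be an isomorphism, which is exactly why $Y_{\sing}$ must be excised. Once the biholomorphism over $U$ is established, the remaining assertions follow formally from Lemma \ref{meromorfason} together with the irreducibility and compactness of the interchanged graph $\Ga'$.
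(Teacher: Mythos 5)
Your proof is correct and follows essentially the same route as the paper: transpose the graph of $f$ to $Y\times X$, observe that its intersection with $(Y-S)\times X$ is the graph of the set-theoretic inverse and is dense in the irreducible transposed graph, then invoke Lemma \ref{meromorfason}. The one difference is that you also excise $Y_{\sing}$ so that the restricted bijection has normal target and its inverse is genuinely holomorphic before Lemma \ref{meromorfason} is applied; the paper's proof defines $h(y)=f\meno(y)$ on $Y-B$ and tacitly treats it as holomorphic, so your extra step is a legitimate tightening rather than a different approach.
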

\begin{proof}
  Define $h: Y - B \ra X$ by $h(y):= f\meno(y)$.  Let
  $G\subset X\times Y$ denote the graph of $f$, which is an
  irreducible analytic subset of $X\times Y$.  The map
  $t: X\times Y \ra Y\times X$, $t(x,y) : = (y,x)$ is a
  biholomorphism, so also $G':=t(G)$ is analytic and irreducible in
  $Y\times X$. The set $G' - \pi_1\meno (B) $ is Zariski open in $G'$
  and it coincides with the graph of $h$.  By Lemma \ref
  {meromorfason} we conclude that $h$ extends to a meromorphic map
  $Y\mero X$. By construction we have $hf = \id_X$ on $X - f\meno(B)$
  (which is and nonempty and dense in $X$) and $fh = \id_Y$ on $Y-B$.
  Therefore $h$ is a meromorphic inverse to $f$.
\end{proof}

We will need the following classical result (see e.g. \cite[Cor. 1.20
p. 108]{peternell-differential} and \cite[p. 116]{open-demailly}).

\begin{teo}
  \label{mappotta}
  Let $X$ and $Y$ be compact complex spaces and let $f : X \ra Y$ be a
  proper surjective holomorphic map.  Assume that $X$ and $Y$ are
  reduced and irreducible.  Then there are Zariski open subsets
  $Y^0 \subset Y$ and $X^0 \subset X$ such that $f(X^0) = Y^0$, both
  $X^0$ and $Y^0$ are non-singular and $f\restr{X^0} : X^0 \ra Y^0$ is
  a submersion with fibres of dimension equal to $\dim X - \dim X$.
\end{teo}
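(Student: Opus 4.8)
The plan is to reduce the whole statement to a question about a holomorphic map of complex \emph{manifolds} and then to analyse the rank of its differential, controlling the images of the bad loci by means of Remmert's Proper Mapping Theorem (already used in Lemma \ref{chiusura}). Write $d=\dim X$ and $e=\dim Y$; surjectivity forces $d\ge e$. Since $X$ and $Y$ are reduced, the singular loci $X_\sing$ and $Y_\sing$ are proper analytic subsets, so $X_\reg$ and $Y_\reg$ are Zariski open and dense. Moreover $f\meno(Y_\sing)$ is a proper analytic subset, for otherwise $f(X)\subset Y_\sing\subsetneq Y$, contradicting surjectivity. Hence $X_2:=X_\reg\cap f\meno(Y_\reg)$ is Zariski open and dense in $X$, and $f$ restricts to a holomorphic map $f\colon X_2\ra Y_\reg$ of complex manifolds.

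First I would determine the generic rank $r$ of $df$ on $X_2$. As $X_2$ is dense and $f$ is continuous and surjective, $Y=f(X)=f(\overline{X_2})\subset\overline{f(X_2)}\subset Y$, so $\overline{f(X_2)}=Y$. For a holomorphic map of manifolds the maximal rank of the differential equals the dimension of the closure of the image (rank theorem), whence $r=\dim\overline{f(X_2)}=e$. Consequently the locus $U:=\{x\in X_2:\operatorname{rank}df_x=e\}$, cut out on $X_2$ by the nonvanishing of some $e\times e$ minor of the Jacobian, is Zariski open and dense in $X$; the restriction $f\colon U\ra Y_\reg$ is a submersion, so it is open and its fibres are smooth of pure dimension $d-e$.

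The remaining, and genuinely delicate, point is to replace the Euclidean-open image $f(U)$ by a \emph{Zariski} open $Y^0$ with $f(X^0)=Y^0$ for a Zariski open $X^0\subset U$. I would bound the two bad loci separately. The critical locus $C:=X_2-U$ equals $(X-U)\cap X_2$, i.e. it is Zariski open in the analytic set $X-U$, so $\overline C$ is a union of irreducible components of $X-U$ and is analytic. Stratifying $C$ by the value of $\operatorname{rank}df$ and applying the rank theorem on each stratum, where $f$ has rank $<e$, gives $\dim f(C)<e$; since $\overline{f(C)}=f(\overline C)$ is analytic by Remmert, it is a proper analytic subset of $Y$. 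For the base points whose fibre avoids the smooth part, I would invoke the fibre-dimension theorem: the general fibre of $f$ has dimension $d-e$, whereas the general fibre of $f|_{X_\sing}$ has dimension $\le\dim X_\sing-\dim\overline{f(X_\sing)}\le(d-1)-e<d-e$. Hence over a Zariski dense open $V\subset Y$ the full fibre strictly contains its intersection with $X_\sing$ and therefore meets $X_\reg$; equivalently $Y-f(X_\reg)$ lies in a proper analytic subset $B:=Y-V$.

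Putting $Y^0:=Y_\reg-\bigl(B\cup\overline{f(C)}\bigr)$, a Zariski open dense subset of $Y_\reg$, and $X^0:=U\cap f\meno(Y^0)$, I would verify $f(X^0)=Y^0$: for $y\in Y^0$ the fibre meets $X_\reg$, and since $y\notin\overline{f(C)}$ no point of $f\meno(y)\cap X_\reg$ is critical, so $f\meno(y)\cap U\ne\varnothing$. By construction $X^0$ and $Y^0$ are smooth and Zariski open, $f|_{X^0}$ is a submersion, and its fibres have dimension $d-e$. The main obstacle is exactly this last construction, namely passing from Euclidean to Zariski openness by showing that the set of bad base values is contained in a proper analytic subset; this is where properness (through Remmert's theorem) and the strict inequality $\dim X_\sing<\dim X$ (through the fibre-dimension theorem) are indispensable, while everything upstream is a routine rank computation.
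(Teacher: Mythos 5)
The paper does not actually prove Theorem \ref{mappotta}; it quotes it as a classical result with references to Peternell--Remmert and Demailly, so there is no internal proof to compare against, and your proposal must stand on its own. Its architecture (excise the singular loci, determine the generic rank, control the images of the bad loci via Remmert) is the standard one, but two steps are not sound. The lesser one: the principle ``for a holomorphic map of manifolds the maximal rank of the differential equals the dimension of the closure of the image'' is false --- a generic one-parameter subgroup $\C \ra T$ of a $2$-dimensional compact complex torus has rank $1$ everywhere and dense image. It holds for \emph{proper} maps, but $f\restr{X_2} : X_2 \ra Y_\reg$ is not proper. The conclusion $r=e$ is nevertheless correct and easily recovered from the properness of $f$ itself: by Remmert's rank theorem the generic fibre dimension is $d-e$, and at a point where the rank of $df$ is locally constant the constant rank theorem exhibits the fibre as a manifold of dimension $d-r$; intersecting the two dense open sets gives $r=e$.

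The genuine gap is the analyticity of $\overline{C}$. You assert that $C = X_2 - U$ ``is Zariski open in the analytic set $X-U$, so $\overline{C}$ is a union of irreducible components of $X-U$ and is analytic'', but this presupposes exactly what has to be proved: $X-U$ is the union of the analytic set $X-X_2$ with $C$, and $C$ is only known to be analytic in the \emph{open} set $X_2$ (it is cut out by Jacobian minors that exist only on $X_\reg \cap f\meno(Y_\reg)$). The closure in $X$ of an analytic subset of a dense Zariski open subset need not be analytic --- this is precisely the phenomenon the paper exhibits in Remark \ref{ciuccia}, and more simply $\{1/n\}\subset \C^*$ is analytic in $\C^*$ with non-analytic closure in $\C$. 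Without this, Remmert cannot be applied to produce an analytic $\overline{f(C)}$, and then neither $Y^0$ nor $X^0 = U \cap f\meno(Y^0)$ is known to be Zariski open; since Sard already gives that the bad values have measure zero, this passage from ``measure zero'' to ``contained in a proper analytic subset'' is the entire content of the theorem, and it is left unproved. A standard repair is to first pass to a resolution of singularities $\pi : \widetilde{X} \ra X$ (as the paper does elsewhere, e.g.\ in the proof of Theorem \ref{proj}): for the proper map $\widetilde{f} = f \circ \pi$ from a manifold, the degeneracy locus $\{x : \dim \operatorname{im}(d\widetilde{f}_x) < e\}$, defined via local embeddings $Y \hookrightarrow \C^N$, is a closed analytic subset of all of $\widetilde{X}$, so Remmert applies to its image, which then has dimension $<e$ by your stratification-plus-Baire argument over $Y_\reg$ and lies in $Y_\sing$ over $Y_\sing$. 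Your treatment of the fibres entirely contained in $X_\sing$, via the fibre-dimension theorem, is correct.
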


We now recall the basic definitions related to Barlet cycle space.

\begin{defin}
  Let $\M$ be a reduced complex space. A $n$-cycle in $\M$ is a locally
  finite sum $Z=\sum_i n_i Z_i$ where $n_i \in \mathbb{N}$ and $Z_i$
  is an irreducible analytic subset of $\M$ of dimension $n$.
\end{defin}

The set of $n$-cycles in $\M$ will be denote by $\cicl_n(\M)$.
A cycle is \emph{compact} if the subsets $Z_i$ are compact and $n_i\neq 0$
for only finitely many indices.
The set of compact $n$-cycles in $\M$ will be denote by $\cic_n(\M)$.
It can
be provided with the structure of a Banach analytic space. The
irreducible components have finite dimension.  A family of
$n$-dimensional cycles in $\M$ parametrized by a topological space $S$
is a map $f:S \ra C_n(\M)$. We also denote the family by
$\{\y_s:=f(s)\}_{s\in S}$.  The family is called continuous if the
corresponding map is continuous. It is called analytic if $S$ is a
complex space and the map is holomorphic.

The universal family of $n$-cycles in $\M$ is the analytic family
corresponding to to the identity map of $C_n(\M)$
\cite[p. 367]{barlet-magnusson-vol-1}.

An $n$-cycle $\y$ on $\M$ has a well-defined multiplicity $\mult_x(\y)$
at every point $x\in \M$ \cite[p. 446]{barlet-magnusson-vol-1}.

Let $(\y_s)_{s\in S}$ be an analytic family of $n$-cycles on $\M$.  The
\enf{set-theoretic graph} of the family is the analytic subset
\begin{gather}
  |G_S|:= \{(s, x) \in S\times \M: \mult_x(\y_s) \geq 1\}.
\label{def-grafico}
\end{gather}
Let $|G_S|= \cup G_i$ be the decomposition in irreducible components.
For each $i$ the function $(s,x) \mapsto \mult_x(\y_s)$ has a generic
value $n_i$ on $G_i$.  Then $ G:= \sum_i n_i G_i $ is the \enf{graph}
of the family. It is an $n+q$-cycle on $S\times \M$, where $S$ is
reduced and has pure dimension $q$. This cycle is compact if and only
if $S$ is compact.

\begin{teo}[\protect{
\cite  [Thm. 3.3.1 p. 448]{barlet-magnusson-vol-1}}]
\label{molteplicita-componenti}
For very general $s\in S$ let $\y_s = \sum_k n_{s,k} Z_{s,k}$ be the
decomposition in irreducible components. Then $n_{s,k} = n_i$ if
$\{s\}\times Z_{s,k} \subset G_i$.
\end{teo}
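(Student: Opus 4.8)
The plan is to exploit the fact, recalled just before the statement, that on each irreducible component $G_i$ of $|G_S|$ the multiplicity function $(s,x)\mapsto \mult_x(\y_s)$ takes a generic value $n_i$. Denote by $U_i\subset G_i$ the Zariski open dense subset where this function equals $n_i$, and by $B_i:=G_i - U_i$ the proper analytic locus where it differs. Since $n_{s,k}$ is by definition the value of $\mult_x(\y_s)$ at a generic point $x$ of the component $Z_{s,k}$ (there $x$ is a smooth point of $Z_{s,k}$ lying on no other component, so the cycle multiplicity is exactly $n_{s,k}$), it suffices to prove that for very general $s$, whenever $\{s\}\times Z_{s,k}\subset G_i$, the set $\{s\}\times Z_{s,k}$ is \emph{not} contained in $B_i$; then its generic point lies in $U_i$ and $n_{s,k}=n_i$. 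Thus the whole assertion becomes a statement about the fibres of the projection $\pi_1\colon |G_S|\ra S$ over a very general point $s$.

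The heart of the argument is a fibre--dimension count. Recall $G=\sum_i n_i G_i$ is an $(n+q)$-cycle, so each $G_i$ has dimension $n+q$, while $S$ is reduced of pure dimension $q$. The map $\pi_1$ is proper, its fibre over $s$ being the compact support $\{s\}\times\supp\y_s$, so $\pi_1(G_i)$ is analytic by Remmert's theorem. If $\pi_1(G_i)\neq S$ it is a proper analytic subset, and $(G_i)_s:=G_i\cap(\{s\}\times\M)$ is empty for $s$ outside it; such $G_i$ contribute no component of $\y_s$ for very general $s$. The remaining $G_i$ dominate $S$, with generic fibre of dimension $n$. Now $B_i$ and, for $i\neq i'$, the intersections $G_i\cap G_{i'}$ all have dimension $\leq n+q-1$. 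By upper semicontinuity of fibre dimension along $\pi_1$, each such set has its fibre over $s$ of dimension $\leq n-1$ once $s$ avoids a proper analytic subset of $S$: a component not dominating $S$ has empty fibre over a generic $s$, while a dominating one of dimension $\leq n+q-1$ has generic fibre dimension $\leq n-1$.

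Let $N\subset S$ be the union of all these proper analytic subsets, taken over the $G_i$ and over the pairs $(i,i')$. If $S$ is compact the family $\{G_i\}$ is finite and $N$ is a single proper analytic subset; in general the family is only locally finite, so $N$ is a countable union of proper analytic subsets, and its complement is precisely the set of very general $s$. Fix $s\notin N$. Each irreducible component $\{s\}\times Z_{s,k}$ of $\pi_1\meno(s)=\{s\}\times\supp\y_s$ is compact, irreducible, and of dimension $n$; being contained in the locally finite union $\bigcup_i (G_i)_s$ of closed analytic sets, it lies in a single $(G_i)_s$. It cannot be contained in any $(B_i)_s$ nor in any $(G_i\cap G_{i'})_s$, both of dimension $\leq n-1$; in particular the component $G_i$ containing it is unique, and $\{s\}\times Z_{s,k}\not\subset B_i$. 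Hence the generic point of $Z_{s,k}$ lies in $U_i$, where $\mult_x(\y_s)=n_i$, giving $n_{s,k}=n_i$ as claimed.

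The main obstacle is the fibre--dimension input and its two prerequisites: first, that the multiplicity function is generically constant on each $G_i$ with a well-defined value $n_i$, which is exactly the content of the construction of the graph $G$ recalled above and may be taken as known; second, the upper semicontinuity of fibre dimension for $\pi_1$, which guarantees that the bad loci $B_i$ and the pairwise intersections $G_i\cap G_{i'}$ meet a very general fibre in dimension $\leq n-1$. The only genuinely delicate step is the passage from ``generic'' to ``very general'' forced by the possibly infinite, locally finite family of components $G_i$ when $S$ is non-compact; this is what necessitates excluding a countable union of proper analytic subsets rather than a single one.
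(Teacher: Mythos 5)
This statement is not proved in the paper at all: it is quoted verbatim from Barlet--Magn\'usson \cite[Thm.~3.3.1 p.~448]{barlet-magnusson-vol-1} as a black box, so there is no in-paper argument to compare yours against. Judged on its own, your proof is the natural one and is essentially sound: reduce $n_{s,k}$ to the value of $\mult_x(\y_s)$ at a generic (smooth, single-component) point of $Z_{s,k}$, and then show by a fibre-dimension count that for very general $s$ the $n$-dimensional set $\{s\}\times Z_{s,k}$ cannot sit inside the deviation locus $B_i$ nor inside any pairwise intersection $G_i\cap G_{i'}$, since these are at most $(n+q-1)$-dimensional and hence meet a very general fibre of the proper map $\pi_1:|G_S|\ra S$ in dimension at most $n-1$. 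Your handling of the non-dominating components, of the locally finite (possibly countable) family of components forcing ``very general'' rather than ``generic'', and of the uniqueness of the component $G_i$ containing $\{s\}\times Z_{s,k}$ is all correct.

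One point deserves to be made explicit rather than implicit. Your dimension count applies Chevalley-type semicontinuity of fibre dimension to $B_i$, and this requires $B_i$ to be contained in a \emph{proper analytic} subset of $G_i$, not merely a closed one: the mere statement that $\mult_x(\y_s)$ ``has a generic value $n_i$ on $G_i$'' could a priori be read as constancy off some closed set with no analytic structure, to which the semicontinuity argument does not apply. In Barlet--Magn\'usson's construction the multiplicity is indeed Zariski-generically constant on each $G_i$ (the locus where it exceeds, respectively falls below, $n_i$ is analytic), so the input you ``take as known'' does hold, but since that input is essentially the content of the theorem's source, you should either cite it precisely or prove it; as written, this is the one genuine load-bearing assumption in your argument.
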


\begin{teo} [\protect{\cite[Thm. 3.4.1
    p. 449]{barlet-magnusson-vol-1}}]
  \label{barlet1}
  Let $S$ be a normal complex space and let $G\in \cicl_{n+q}(S\times \M)$.
  Assume that the fibres of $\pi: |G|\ra S$ have pure dimension $n$
  and that $\pi$ is proper. Then there is a unique analytic family of
  cycles whose graph is $G$.
\end{teo}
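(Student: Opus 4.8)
The plan is to separate uniqueness, which is essentially formal, from existence, which requires producing the family in local coordinates as a branched covering. For uniqueness, suppose $(\y_s)$ and $(\y'_s)$ are two analytic families with the same graph $G$. By Theorem \ref{molteplicita-componenti} both families have the same supports (read off from $|G|$ as $\{x : \mult_x \ge 1\}$) and the same generic multiplicities $n_i$ for very general $s$, so $\y_s = \y'_s$ on a dense subset of $S$; since two holomorphic maps $S \ra C_n(\M)$ agree on a closed set, they coincide. Thus I would concentrate entirely on existence.

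For existence I would first define the candidate family set-theoretically by $|\y_s| := \{x \in \M : (s,x) \in |G|\}$, prescribing on each component the multiplicity dictated by Theorem \ref{molteplicita-componenti}, and then verify that $s \mapsto \y_s$ is holomorphic. Recall that holomorphy of a map into $C_n(\M)$ is tested, through the very definition of the analytic structure on cycle space, on adapted scales: near $s_0$ and a point $x_0 \in |\y_{s_0}|$ I would choose coordinates identifying a neighborhood of $x_0$ in $\M$ with a product $U \times B$ of polydiscs, $U \subset \C^n$ and $B \subset \C^{p}$ with $n + p = \dim \M$, arranged so that $|\y_{s_0}|$ does not meet $U \times \partial B$. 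Then for $s$ in a small neighborhood $V$ of $s_0$ the projection
\[
  q : |G| \cap (V \times U \times B) \longrightarrow V \times U
\]
is proper with finite fibres, hence a branched covering; here properness of $\pi$ and the pure $n$-dimensionality of its fibres are exactly what guarantee that $q$ is finite and that its degree $k$ (counted with the weights $n_i$) is locally constant.

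The crux is the holomorphy of the associated classifying map. Over the dense open set $W \subset V \times U$ where $q$ is unramified the sheets give holomorphic local sections into $B$, and their elementary symmetric functions define a holomorphic map $W \ra \Sym^k B$. These symmetric functions are locally bounded, since all sheets remain in the relatively compact $\overline{B}$. Because $S$ is normal, $V \times U$ is normal and $(V \times U) - W$ is a nowhere dense analytic subset; by the Riemann extension theorem on normal spaces the symmetric functions extend holomorphically across it, yielding a holomorphic map $V \times U \ra \Sym^k B$ whose restriction to each slice $\{s\} \times U$ is the classifying map of $\y_s \cap (U \times B)$. This is precisely the analyticity of the family in the chosen scale. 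I expect the genuine difficulty to lie in these last two points: arranging the scales so that $q$ is honestly a finite branched cover of constant degree — which needs a careful use of properness and equidimensionality to exclude degenerations of the fibres — and invoking normality of $S$ to extend the symmetric functions across the degeneracy locus. Normality is essential here: without it the extension, and hence the analyticity of the family, can fail.
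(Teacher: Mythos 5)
The paper offers no proof of this statement: it is quoted verbatim from Barlet--Magn\'usson \cite[Thm.~3.4.1]{barlet-magnusson-vol-1} and used as a black box. Your outline (uniqueness via agreement on the dense set of very general $s$, existence via adapted scales, finite branched coverings of locally constant weighted degree, classifying maps into $\Sym^k B$, and Riemann extension on the normal space $V\times U$) is essentially the standard argument given in that reference, so it is the "same approach" in the only meaningful sense here; the only imprecision is that for non-generic $s$ the multiplicities of $\y_s$ are not supplied by Theorem \ref{molteplicita-componenti} but must be read off from the value of the extended symmetric-function map at $s$, which your construction does in fact provide.
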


\begin{defin}
  \label{defclassc} A complex manifold $X$ is said to belong to the
  Fujiki class $\mathscr{C}$ if there is a compact K\"ahler manifold
  $Y$ and a surjective meromorphic map $h:Y \mero X$.  By Hironaka's
  theorem one can assume that $h$ is holomorphic.  Moreover in
  \cite{varouchas-image,barlet-varouchas} it is proven that $h$ can be
  assumed to be bimeromorphic.  For more details see \cite[\S
  4.3]{fujiki-closedness},\cite{ueno-class-C,varouchas-image,barlet-varouchas}.
\end{defin}

The following result due to Campana and Fujiki is fundamental for the
whole paper.  See \cite[p. 431]{barlet-magnusson-vol-1} for a proof in
the K\"ahler case and \cite{fujiki-Douady,campana-algebricite} for the
general case.
\begin{teo}\label{C}
  If $\M$ is a reduced complex space in class $\fuj$, then any
  irreducible component of $C_n(\M)$ is compact.
\end{teo}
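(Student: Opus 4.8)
\textbf{The plan} is to reduce the assertion to a uniform bound on volumes, settle the \keler\ case by a cohomological argument, and then transfer this case to class $\fuj$ through a \keler\ modification. First I would fix a Hermitian metric on $\M$ with associated $(1,1)$-form $\omega$ and set $\vol(Z):=\sum_i n_i\int_{Z_i}\omega^n$ for an $n$-cycle $Z=\sum_i n_iZ_i$. By Bishop's compactness theorem in its cycle-space formulation (see \cite{barlet-magnusson-vol-1}), for every $C>0$ the set $\{Z\in C_n(\M):\vol(Z)\le C\}$ is a compact subset of $C_n(\M)$. Since every irreducible component $\mathcal Z$ of $C_n(\M)$ is closed, it then suffices to prove that $\vol$ is bounded on $\mathcal Z$, for then $\mathcal Z$ is a closed subset of a compact set.

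When $\M$ is \keler\ and $\omega$ is closed this is immediate: $\vol(Z)=\langle[\omega]^n,[Z]\rangle$ depends only on the homology class $[Z]\in H_{2n}(\M,\R)$, and since cycles in a connected analytic family are homologous, $[Z]$ and hence $\vol(Z)$ is locally constant on $C_n(\M)$. Thus $\vol$ is constant on $\mathcal Z$ and the \keler\ case follows.

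In general I would choose, by Hironaka and Varouchas, a bimeromorphic holomorphic modification $\mu:\hat X\ra\M$ with $\hat X$ compact \keler, and fix a \keler\ form $\hat\omega$ on $\hat X$. If $Z\in\mathcal Z$ has no component inside the center $S\subset\M$ of $\mu$, its proper transform $\hat Z$ is an $n$-cycle on $\hat X$ with $\mu_*\hat Z=Z$; choosing $C$ with $\mu^*\omega\le C\hat\omega$ as $(1,1)$-forms gives $\vol(Z)=\int_{\hat Z}(\mu^*\omega)^n\le C^n\langle[\hat\omega]^n,[\hat Z]\rangle$. Since $[\hat Z]$ ranges over finitely many classes as $Z$ varies in the irreducible $\mathcal Z$, the right-hand side is bounded and we conclude as before. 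Equivalently, the holomorphic direct-image morphism $\mu_*:C_n(\hat X)\ra C_n(\M)$ realizes $\mathcal Z$ as $\mu_*(\mathcal W)$ for a component $\mathcal W$ of $C_n(\hat X)$, which is compact by the \keler\ case.

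\textbf{The hard part} will be to justify, rather than assert, that $[\hat Z]$ is constant on $\mathcal Z$: one must show that $Z\mapsto\hat Z$ is the restriction of a meromorphic map $\mathcal Z\mero C_n(\hat X)$, which rests on the functoriality of Barlet cycle spaces under $\mu$ and on the behaviour of proper transforms in analytic families. The second difficulty is the disposal of components $\mathcal Z$ whose general member is supported in $S$: these are not reached by $\mu_*$, since the fibres of $\mu$ over $S$ may be everywhere positive-dimensional. I would handle them by induction on $\dim\M$, using that a desingularization of $S$ again lies in class $\fuj$ and that $n$-cycles supported on $S$ correspond to $n$-cycles on that desingularization. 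This interplay between the functoriality of cycle spaces and the inductive control of cycles inside the center---carried out in detail by Campana and Fujiki---is where I expect the real work to lie.
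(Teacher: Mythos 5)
The paper offers no proof of this statement: it is quoted from the literature, with \cite[p. 431]{barlet-magnusson-vol-1} cited for the K\"ahler case and \cite{fujiki-Douady,campana-algebricite} for the general case. Your outline is exactly the strategy of those references: Bishop compactness reduces everything to a volume bound; in the K\"ahler case the volume is a topological pairing, hence locally constant on $C_n(\M)$; in the general case one transfers the bound through a K\"ahler modification. The K\"ahler half of your argument is complete and correct as written.

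For the general case, however, what you label ``the hard part'' is not a technical footnote but the entire content of Campana's theorem, and your proposal does not supply it. Two things are missing. First, the assertion that $[\hat Z]$ ranges over finitely many classes as $Z$ varies in $\mathcal Z$ --- equivalently, that the proper transform defines a meromorphic (or at least generically holomorphic) map from $\mathcal Z$ into $C_n(\hat X)$, so that a dense Zariski-open subset of $\mathcal Z$ lands in a single component $\mathcal W$ of $C_n(\hat X)$ and $\mathcal Z\subset\mu_*(\mathcal W)$ follows by properness of $\mu_*$ and irreducibility of $\mathcal Z$. The proper transform of a cycle can jump in families, so this requires a genuine argument, not an appeal to ``functoriality.'' Second, the components whose general member is supported in the center $S$: here one must check that such a component is actually a component of $C_n(S)$, that $S$ (or its desingularization) is again in class $\fuj$, and that an induction on dimension closes. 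You name both difficulties honestly, but naming them is not the same as resolving them; as it stands the proposal is a correct roadmap to the proof in \cite{campana-algebricite}, not a proof. Since the paper itself treats the theorem as a black box, the practical upshot is that your sketch is a faithful summary of why the cited result is plausible, with the decisive steps still delegated to Campana and Fujiki.
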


\section{Limit maps for sequences in $\Aut^0(X)$}
\label{autx0}

Let $X$ be an $n$-dimensional compact connected complex manifold in
the class $\fuj$.  For $f\in \ai$, let $\Ga_f \subset X\times X$
denote the graph of $f$.  Since $X$ is a connected manifold, the graph
is an irreducible analytic subset. In particular
$\Ga_f \in C_n(X\times X)$.  This yields a map
\begin{gather}
  \label{def-j}
  j : \ai \ra C_n(X\times X),\qquad j(f):=\Ga_f.
\end{gather}
We denote by $\au$ the image of $j$ and by $\aus$ the closure of $\au$
in $C_n (X\times X)$.  We will often identify $f\in \ai$ with $j(f)$
and consider $\ai$ as a subset of $\aus$.  The idea of replacing $f$
by its graph goes back to \cite{douady-modules} and has been used in
\cite{lieberman-automorphisms} and \cite{fujiki-automorphisms}.  Also
the following Proposition has been proven in
\cite{fujiki-automorphisms,lieberman-automorphisms}.
\begin{prop}
  \label{lieber} The map $j$ is a holomorphic embedding, $\aus$ is an
  irreducible component of $C_n(X\times X )$ and $\B:=\aus - \au$ is
  an analytic subset of $\aus$.
\end{prop}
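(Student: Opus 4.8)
The plan is to prove that $j$ is holomorphic by means of Barlet's universal-family theorem, and then to invert it explicitly on an open set of cycles; all three assertions will follow once one controls which cycles in $C_n(X\times X)$ are graphs of automorphisms. First recall that $\ai$ is a connected complex Lie group (Bochner--Montgomery), hence a smooth and in particular normal complex space, acting holomorphically on $X$. To see that $j$ is holomorphic I would form the universal graph
\[ \mathcal G:=\{(g,x,y)\in\ai\times(X\times X): y=g\cd x\}, \]
which is the graph of the action map and is therefore an irreducible analytic subset, biholomorphic to $\ai\times X$. The projection $\mathcal G\to\ai$ is proper because $X\times X$ is compact, and its fibres are the graphs $\Ga_g\cong X$, of pure dimension $n$. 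Viewing $\mathcal G$ as a reduced cycle and applying Theorem \ref{barlet1} with $S=\ai$ gives a unique analytic family of $n$-cycles on $X\times X$ whose graph is $\mathcal G$; by Theorem \ref{molteplicita-componenti} its member over $g$ is the reduced cycle $\Ga_g$. Thus the associated holomorphic map $\ai\to C_n(X\times X)$ is exactly $j$, and $j$ is visibly injective.

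Next I would construct a holomorphic inverse on the set $U\subset C_n(X\times X)$ of cycles that are graphs of automorphisms, obtaining simultaneously the embedding property and the openness of $\au$. Working with the universal family over $C_n(X\times X)$, write $p:|G|\to C_n(X\times X)$ for the (proper) family projection and $p_1,p_2:|G|\to X$ for the maps induced by the two projections of $X\times X$. Since the homology class of a cycle is locally constant on $C_n(X\times X)$, near a graph the intersection number of $Z$ with $\{x\}\times X$ stays equal to $1$; combining this with upper semicontinuity of fibre dimension, the locus $U$ where $p_1$ and $p_2$ are fibrewise finite and $Z$ is reduced is open, and there $\pi_1|_Z:|Z|\to X$ is a degree-one finite map onto the smooth, hence normal, manifold $X$, so a biholomorphism. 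Hence $\Phi(Z):=\pi_2\circ(\pi_1|_Z)\meno$ is an automorphism depending holomorphically on $Z$, and one checks $\Phi\circ j=\id$ and $j\circ\Phi=\id_U$. Therefore $j$ is a biholomorphism of $\Aut(X)$ onto the open set $U$; restricting to the identity component shows that $j$ is a holomorphic embedding and that $\au=j(\ai)$ is open in $C_n(X\times X)$.

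Since $\au\cong\ai$ is connected and open, it is irreducible and dense in the unique irreducible component containing it, so $\aus=\overline{\au}$ is that component (and is compact by Theorem \ref{C}). For the analyticity of $\B$ I would restrict the universal family to $\aus$: every cycle there has the class of the diagonal $[\Delta]$, so a cycle $Z\in\aus$ fails to be the graph of an automorphism precisely when $p_1$ or $p_2$ has a positive-dimensional fibre over some point of $X$. Letting $E_i\subset|G|$ be the locus (analytic, by semicontinuity of fibre dimension) where the $p_i$-fibre has dimension $\geq1$, the images $p(E_i)$ are analytic in $\aus$ by Remmert's proper mapping theorem, and one concludes $\B=p(E_1)\cup p(E_2)$.

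The hard part will be the control of limits of graphs that underlies the openness of $U$ and the holomorphy of $\Phi$: one must rule out that a cycle near a graph, or a boundary cycle with finite projections, degenerates into something reducible, non-reduced, or with contracted fibres. This is where the local constancy of the homology class and the semicontinuity of fibre dimension do the real work. A companion point, needed to ensure that no boundary cycle slips into $U$, is the identity $U\cap\aus=\au$; I would deduce it from the fact that $\ai$ is closed in $\Aut(X)$ together with $j$ being a homeomorphism onto $U$.
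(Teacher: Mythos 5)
Your first step---holomorphy of $j$ via the universal graph $Z\cong\ai\times X$ and Theorem \ref{barlet1}---is exactly the paper's argument. For the remaining three assertions the paper simply cites \cite[Prop.~2.1]{lieberman-automorphisms}, so you are reconstructing that proof; your skeleton (local constancy of the homology class forces intersection number $1$ with the slices $\{x\}\times X$, semicontinuity of fibre dimension makes the bad locus analytic, Remmert's proper mapping theorem pushes it down to the base) is the right one and is essentially the content of Lemmas \ref{interz} and \ref{cavallo} later in the paper.

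There is, however, a genuine gap where you assert that $\Phi(Z)=\pi_2\circ(\pi_1|_Z)\meno$ ``depends holomorphically on $Z$'' and hence that $j$ is a biholomorphism onto the open set $U$. What you are inverting is the proper holomorphic bijection $\psi\colon|G_U|\to U\times X$, and a bijective holomorphic map of reduced complex spaces is a biholomorphism only when the target is weakly normal; the paper makes exactly this point in the remark following Lemma \ref{cavallo}, where the analogous inverse exists holomorphically only on the weak normalization of $\aus$. Nothing guarantees a priori that $C_n(X\times X)$ is weakly normal (or smooth, or normal) near a graph, so your argument yields only that $j$ is a holomorphic \emph{homeomorphism} onto the open set $\au$---compare $t\mapsto(t^2,t^3)$, a holomorphic homeomorphism of $\C$ onto the reduced cuspidal cubic that is not an isomorphism. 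To get the embedding one needs an extra input, e.g.\ the universal property of $\Aut(X)$ applied to the flat family of graphs over the corresponding open subset of the Douady space, transported to the cycle space via the comparison morphism $\Fu(X)_\red\to\aus$; this is how Lieberman and Fujiki proceed. A second, smaller slip: ``$\au$ is connected and open, hence dense in the unique irreducible component containing it'' is not valid as stated, since a nonempty open subset of an irreducible space need not be dense. Density of $\au$ in that component $W$ should instead be deduced from the analyticity of $W-U=W\cap\bigl(p(E_1)\cup p(E_2)\bigr)$, which exhibits $W\cap U$ as the complement of a proper analytic subset of $W$, hence dense and connected; connectedness then also forces $W\cap U=\au$, closing the loop you flag at the end.
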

\begin{proof}
  To prove that $j$ is holomorphic it is enough to prove that the
  family of cycles $(\Ga_f)_{f\in \ai}$ is analytic. Indeed
  $Z:= \{ (f, x, y) \in \ai\times X \times X: f(x) = y\}$ is a complex
  submanifold of $\ai\times X\times X$ biholomorphic to $\ai\times X$,
  hence irreducible. By Theorem \ref{barlet1} it defines an analytic
  family, which corresponds to the map $j$. The image of $j$ is
  contained in a unique irreducible component of $C_n(X\times X)$ that
  we denote by $\aus$.  The rest is proven in
  \cite[Prop. 2.1]{lieberman-automorphisms}.
\end{proof}

It follows from Theorem \ref{C} that $\aus$ is a compact irreducible
analytic space. In fact it belongs to class $\mathscr{C} $
\cite[Cor. 3]{campana-algebricite}.  The inclusion
$\aus \hookrightarrow C_n(X\times X)$ corresponds to a family of
$n$-cycles on $X\times X$ that we denote by $\{\y_b\}_{b\in \aus}$.
In other words $\{\y_b\}_{b\in \aus}$ is the restriction of the
universal family of cycles to $\aus \subset C_n(X\times X)$.  Let
$G_{\aus } $ be the graph of the family $\{\y_b\}_{b\in \aus}$.

\begin{lemma}
  \label{interz}
  For any $b\in \aus$ and any $x \in X$ the intersection
  $\y_b \cap (\{x\}\times X)$ is non-empty. It either contains a
  component of positive dimension or it reduces to a single point. In
  the latter case this point is a smooth point of $\y_b$, at which
  $Y_b$ and $\{x\}\times X$ is intersect transversally.
\end{lemma}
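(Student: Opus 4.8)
The plan is to use that the cycles $\y_b$ form an analytic family over the irreducible base $\aus$, so that their fundamental homology class is constant, and to compute everything on the diagonal $\Delta$, which equals $\Ga_{\id}$ and hence lies in $\au\subset\aus$ (recall $\id\in\ai$).

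First I would observe that, for an analytic family of compact cycles over a connected parameter space, the fundamental class in $H_{2n}(X\times X;\Zeta)$ is locally constant: along a convergent sequence of cycles the integration currents converge weakly, and $H_{2n}$ is discrete. Since $\aus$ is irreducible, the class is in fact constant, so $[\y_b]=[\Delta]$ for every $b\in\aus$. The classes of $\y_b$ and of $\{x\}\times X$ both have real dimension $2n$ in the $4n$-real-dimensional manifold $X\times X$, hence are of complementary dimension, and their intersection number does not depend on $b$. Evaluating on $\Delta$, where $\Delta\cap(\{x\}\times X)=\{(x,x)\}$ is a single transverse point, yields
\begin{gather*}
  \y_b\cdot(\{x\}\times X)=\Delta\cdot(\{x\}\times X)=1 .
\end{gather*}
In particular this number is non-zero, and two complex-analytic cycles of complementary dimension in a compact manifold with non-zero intersection number must meet (otherwise the localized intersection product would vanish); hence $\y_b\cap(\{x\}\times X)\neq\vacuo$.

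For the dichotomy I would argue by dimension. The compact analytic set $|\y_b|\cap(\{x\}\times X)$ sits inside $\{x\}\times X\cong X$, so it either has a component of positive dimension---the first alternative---or it is $0$-dimensional, hence a finite set of points. In the latter case the intersection number $1$ is the sum over these points of the local intersection multiplicities, each a positive integer, so there is exactly one point $p$, with local multiplicity $1$. Writing $\y_b=\sum_i n_i Z_i$, the contribution at $p$ is $\sum_i n_i\,\mu_p(Z_i,\{x\}\times X)$ with each term through $p$ being $\geq 1$; thus a single component $Z_{i_0}$ passes through $p$, with $n_{i_0}=1$ and $\mu_p(Z_{i_0},\{x\}\times X)=1$. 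I would then invoke the standard local fact that two analytic sets of complementary dimension meeting at an isolated point with multiplicity $1$ are both smooth there and meet transversally; since $\{x\}\times X$ is smooth, this forces $p$ to be a smooth point of $\y_b$ at which $\y_b$ and $\{x\}\times X$ are transverse.

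The step I expect to be most delicate is the constancy of the homology class along the family: it must be grounded in the continuity properties of the Barlet cycle space, namely the weak convergence of integration currents along convergent sequences of cycles (equivalently, local constancy of the fundamental class in analytic families). The other technical input is the purely local statement that intersection multiplicity one forces smoothness and transversality, which I would quote from standard references on intersection theory of analytic sets rather than reprove.
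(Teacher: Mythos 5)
Your proof is correct and follows essentially the same route as the paper's: constancy of the homology class of $\y_b$ over the connected base $\aus$, evaluation of the intersection number against $\{x\}\times X$ on the diagonal to get $[\y_b]\cdot[\{x\}\times X]=1$, and the local fact that an isolated intersection of multiplicity one forces smoothness and transversality (for which the paper cites Fulton, pp.~137--138). Your extra care about the cycle multiplicities $n_i$ and about justifying the local constancy of the class via convergence of integration currents only makes explicit what the paper leaves implicit.
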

\begin{proof}
  Since $\aus$ is connected, the homology class of $\y_b$ is constant
  for $b\in \aus$. In particular it coincides with the homology class
  of the diagonal $\Delta$, which is the graph of the identity map of
  $X$.  Setting for simplicity $F_x:=\{ x\} \times X$, in the homology
  ring of $X\times X$ we have
  \begin{gather*}
    [\y_b] \cd [F_x] = [\Delta] \cd [F_x] .
  \end{gather*}
  Since $\Delta$ and $F_x$ intersect only at $(x,x)$ and the
  intersection is transverse, $ [\Delta] \cd [F_x] =1$ and therefore
  $ [\y_b] \cd [\{x\} \times X]=1$.  It follows immediately that
  $\y_b \cap F_x \neq \vacuo$.  This intersection is a compact
  analytic subset of $X\times X$. If there are no components of
  positive dimension, then $\y_b \cap F_x = \{p_1, \lds, p_k\}$. So
  $\y_b$ and $F_x$ intersect \emph{properly} and
  \begin{gather*}
    1 = [\y_b] \cd [F_x] = \sum_{i=1}^k I(p_i, \y_b , F_x, X\times X).
  \end{gather*}
  Since $I(p_i, \y_b , F_x, X\times X) \geq 1$, we conclude that
  $k=1$, i.e. $\y_b\cap F_x = \{p_1\}$ and also that
  $I(p_1, \y_b, F_x , X\times X) = 1$. It follows that both $\y_b$ and
  $F_x$ are smooth at $p_1$ and that they are transversal, see
  \cite[p. 137-138]{fulton}.
\end{proof}

Given spaces $X_1, X_2, \lds X_n$ we denote by $\pi_i$ and
$\pi_{i,j} $ the natural projections
\begin{gather*}
  \pi_{i} : X_1\times X_2 \times \cds \times X_n \lra X_i \\
  \pi_{ij} : X_1\times X_2 \times \cds \times X_n \lra X_i\times X_j
\end{gather*}

\begin{lemma}
  \label{cavallo}
  Assume that $X\in \fuj$. Set
  \begin{gather*}
    \psi:=\pi_{12}\restr{|G_\aus|} : |G_\aus| \lra \aus \times X,
    \\
    \psi(b, x_1, x_2 ) :=(b, x_1).
  \end{gather*}
  \begin{enumerate}[label={(\roman*)}]
  \item \label{uno} The map $\psi$ is onto.
  \item \label{due} The set
    $ \Omega := \{ (b, x) \in \aus \times X : |\psi\meno (b,x)| = 1\}
    $ is Zariski open in $\aus \times X$.
  \item \label{trebis} The restriction
    $ \psi\restr{\psi\meno (\Omega ) } : \psi\meno (\Omega ) \lra
    \Omega $
    is a homeomorphism.
  \item \label{tre} If $(b,x) \in \Omega$, then there is an open
    neighbourhood $U$ of $x$ in $X$ and a holomorphic function
    $\phi: U \ra X$ such that $\y_b \cap (U\times X)$ coincides with
    the graph of $\phi$.
  \item \label{quaccio} $\au \times X \subset \Omega$.
  \item \label{quinque} The set-theoretic graph $|G_\aus|$ is
    irreducible and $G_\aus = |G_\aus|$.
  \item \label{sei} For any $b\in B$ we have
    $ \Omega \cap ( \{b\}\times X) \neq \vacuo$.
  \item \label{septimius} If $b \in \aus $ there is one and only one
    irreducible component $Z_b$ of $\y_b$ such that $\pi_1(Z_b)=X$.
    This component has multiplicity 1 in $\y_b$ and it is the graph of
    a meromorphic map $f_b: X \mero X$.
  \end{enumerate}
\end{lemma}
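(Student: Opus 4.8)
The plan is to establish the eight assertions in the stated order, using Lemma \ref{interz} to control the fibres of $\psi$ pointwise and the properness of $\psi$ (which holds because $X\times X$ is compact, so $\aus\times X\times X\to\aus\times X$ is proper and $|G_\aus|$ is closed) to upgrade pointwise statements to topological ones. For (i) I would simply read off Lemma \ref{interz}: the fibre $\psi\meno(b,x)=\{b\}\times\bigl(\y_b\cap(\{x\}\times X)\bigr)$ is never empty. For (ii) the point is that, again by Lemma \ref{interz}, the fibre over $(b,x)$ fails to be a single point \emph{exactly} when it contains a positive-dimensional component; hence the complement of $\Omega$ is the locus where $\psi$ has fibres of dimension $\geq 1$, which is analytic by upper semicontinuity of fibre dimension for the proper map $\psi$, so $\Omega$ is Zariski open. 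For (iii), over $\Omega$ the map $\psi$ is a continuous bijection onto $\Omega$; being also proper, hence closed, it is a homeomorphism. For (iv), Lemma \ref{interz} gives that at the unique point $p$ of $\y_b$ over $x$ the cycle $\y_b$ is smooth and meets $\{x\}\times X$ transversally, so $\pi_1\restr{\y_b}$ is a local biholomorphism at $p$; by the inverse function theorem $\y_b$ is locally the graph of $\phi:=\pi_2\circ(\pi_1\restr{\y_b})\meno$ over a neighbourhood $U$ of $x$, and shrinking $U$, using $\y_b\cap(\{x\}\times X)=\{p\}$ together with properness, removes any other sheets. Finally (v) is immediate: for $b=j(g)\in\au$ one has $\y_b=\Ga_g$, whose intersection with each $\{x\}\times X$ is the single point $(x,g(x))$, so $(b,x)\in\Omega$ for all $x$; in particular $\Omega$ is nonempty and dense.

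The heart of the lemma is (vi). To prove that $|G_\aus|$ is irreducible I would set $V:=\overline{\psi\meno(\au\times X)}$; since $\psi\meno(\au\times X)\cong\au\times X$ is irreducible, so is $V$, and $\dim V=\dim\aus+n$. The claim is $V=|G_\aus|$, and here I would invoke the continuity of supports in Barlet cycle space: choosing $b_k\to b$ with $b_k\in\au$ (possible as $\au$ is dense), every point of $|\y_b|$ is a limit of points of $|\y_{b_k}|$, so any $(b,x_1,x_2)\in|G_\aus|$ is a limit of points of $\psi\meno(\au\times X)$ and hence lies in $V$. This is the step I expect to be the main obstacle, since it is exactly where one must use the precise relation between convergence in cycle space and convergence of the underlying point sets. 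Granting irreducibility, the graph cycle has the form $G_\aus=m\,|G_\aus|$ for a single multiplicity $m$, which by Theorem \ref{molteplicita-componenti} equals the generic multiplicity of $\y_b$ along its component; since this multiplicity is $1$ over $\au$ by (v), we conclude $m=1$ and $G_\aus=|G_\aus|$.

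For (vii), I would argue by contradiction: if $\{b\}\times X\subset(\aus\times X)-\Omega$ for some boundary point $b\in B:=\aus-\au$, then by (i) the proper surjection $\psi\restr{\{b\}\times|\y_b|}:\{b\}\times|\y_b|\to\{b\}\times X$ would have all fibres of dimension $\geq 1$, forcing $\dim(\{b\}\times|\y_b|)\geq n+1$, contradicting $\dim\y_b=n$; hence $\Omega\cap(\{b\}\times X)\neq\vacuo$. For (viii), set $\Omega_b:=\{x\in X:(b,x)\in\Omega\}$, a nonempty, hence dense, Zariski open subset of $X$ by (ii) and (vii). Picking $x\in\Omega_b$ and using (iv), the local graph of $\y_b$ near the unique point over $x$ lies in a single irreducible component $Z_b$ of $\y_b$; as this local graph projects onto the open set $U$, the irreducible analytic set $\pi_1(Z_b)$ (analytic by Remmert) has nonempty interior, so $\pi_1(Z_b)=X$. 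Uniqueness follows because for each $x\in\Omega_b$ the single point $p_x$ of $\y_b$ over $x$ must lie in every component dominating $X$, so two such components would meet along a set of dimension $n$ and hence coincide. The multiplicity of $Z_b$ is $1$ because the intersection number $I(p_x,\y_b,\{x\}\times X,X\times X)=1$ from Lemma \ref{interz} is computed at a smooth point of $\y_b$ meeting $\{x\}\times X$ transversally, so the cycle multiplicity there is $1$. Finally, over $\Omega_b$ the map $\pi_1\restr{Z_b}$ is a local biholomorphism by (iv) and bijective, hence a biholomorphism onto $\Omega_b$, so $\pi_1\restr{Z_b}:Z_b\to X$ is a proper modification with center contained in $X-\Omega_b$; by Definition \ref{merodef} (equivalently Lemma \ref{bimeromorfi}) $Z_b$ is the graph of the meromorphic map $f_b:=\pi_2\circ(\pi_1\restr{Z_b})\meno:X\mero X$.
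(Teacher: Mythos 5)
Your proof is correct, and on seven of the eight items it runs along the same lines as the paper's: (i)--(iv) and (vii) via Lemma \ref{interz} plus properness of $\psi$ and the dimension count of Theorem \ref{mappotta}, and (viii) by isolating the unique dominating component over the dense open set $\Omega\cap(\{b\}\times X)$. The genuine divergence is at the irreducibility claim in \ref{quinque}, which you correctly identify as the crux. The paper disposes of it in two lines by quoting Theorem \ref{molteplicita-componenti}: for very general $b$ the cycle $\y_b$ is the reduced graph of an automorphism, so the graph of the family has a unique component, of generic multiplicity $1$. You instead prove $|G_{\aus}|=\overline{\psi\meno(\au\times X)}$ directly, using density of $\au$ together with the fact that convergence in $C_n(X\times X)$ forces Hausdorff convergence of the supports. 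This is legitimate --- the paper itself invokes exactly that statement, \cite[Cor. 2.7.13]{barlet-magnusson-vol-1}, in the proof of Theorem \ref{fursty-analitico} --- and it has the merit of making explicit the geometric reason no extra components can appear over the boundary, whereas the appeal to Theorem \ref{molteplicita-componenti} silently uses the structural fact that every component of the graph is accounted for by components of $\y_s$ at very general $s$; the price is importing the continuity-of-supports result, which is of comparable depth. Two smaller points in your favour: in (iii) your justification (properness, hence closedness, of the restricted map) is cleaner than the paper's appeal to local compactness of the domain; and your multiplicity-one argument in (viii), deducing the coefficient of $Z_b$ from $[\y_b]\cdot[\{x\}\times X]=1$, is a sound local alternative to reading it off from \ref{quinque}.
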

\begin{proof}
  Recall that
  $|G_\aus| = \{ (b, x_1, x_2) \in \aus \times X \times X : (x_1, x_2)
  \in \y_b\}$.  So for any $(b, x_1 ) \in \y_b$
  \begin{gather}
    \label{fibra}
    \psi\meno (b, x_1) = \{ b\} \times \Bigl ( \y_b \cap ( \{x_1\}
    \times X ) \Bigr).
  \end{gather}
  Thus \ref{uno} follows directly from Lemma \ref{interz}.  Next set
  \begin{gather*}
    \Sigma_1 (\psi):= \{ (b, x_1, x_2 ) \in |G_\aus|: \dim_{(b, x_1,
      x_2)} \psi\meno (b,x_1) \geq 1\},
  \end{gather*}
  Since $\psi$ is a proper holomorphic map between reduced complex
  spaces \cite[Thm. II.4.5.3 p. 179]{barlet-magnusson-vol-1} ensures
  that $\Sigma_1(\psi)$ is an analytic subset of $|G_\aus|$. Since
  $\psi$ is proper, its image $Z : = \psi (\Sigma_1(\psi))$ is also an
  analytic set by Remmert Proper Mapping theorem.
  Its
  complement $\Omega' : = \aus \times X -Z$ is Zariski open and it
  contains exactly the points of $\aus\times X$ whose fibre (for
  $\psi$) is 0-dimensional.  Using \eqref{fibra} and Lemma
  \ref{interz} we conclude that $\Omega' = \Omega$.  This proves
  \ref{due}.  The restriction $\psi\restr{\psi\meno (\Omega)}$ is by
  construction a continuous bijection of $\psi\meno(\Omega)$ onto
  $\Omega$.  Since the domain is locally compact and the target is
  Hausdorff, it is a closed map. This proves \ref{trebis}.  Let
  $(b,x) \in \Omega$ and assume
  $\y_b \cap (\{x\}\times X) = \{(x, x')\}$.  It follows from Lemma
  \ref{interz} that $\y_b$ is smooth at $(x,x')$ and transverse to
  $\{x\}\times X$.  Hence there is a neighbourhood $V$ of $ (x,x')$ in
  $\y_b$ such $\pi_1\restr{V}$ is a biholomorphism onto a
  neighbourhood $U\subset X$ of $x$. Set
  $\phi:= \pi_2 \circ (\pi_1\restr{V})\meno : U \ra X$. Then
  $V=\Ga_\phi$.  But $U\subset \Omega$, so
  $V = \y_b \cap (U\times X)$.  This proves \ref{tre}.  \ref{quaccio}
  is obvious.

  If $b \in \au$, then $Y_b$ has a unique component of multiplicity
  1. Therefore the definition \eqref{def-grafico} of $G_\aus$ and
  Theorem \ref {molteplicita-componenti} imply that $G_\aus$ has a
  unique component of multiplicity 1, i.e.  \ref{quinque} holds.

  If $b\in \au$ we have $\{b\} \cap X \subset \Omega$. Assume
  $b \in \B$. By \ref {uno} $\pi_1 \restr{|\y_b|} : \y_b \ra X$ is
  onto. If every fibre had positive dimension, Theorem \ref{mappotta}
  would imply that $\dim| \y_b| \geq \dim X +1$, which is absurd. So
  the fibre over some $x \in X$ has dimension 0. By Lemma \ref{interz}
  $(b,x) \in \Omega$. This proves \ref{sei}.

  Let $Y_b = \sum_{i=1}^r n_i Z_i$ be the decomposition in irreducible
  components.  Since $ \cup_i \pi_1(Z_i) =\pi_1(|\y_b|) =X$, there is
  at least one index $i$, such that $\pi_1(Z_i) = X$.  Set
  $T:= \{x\in X: (b,x) \not \in \Omega\}$.  By \ref{due} $T$ is an
  analytic subset of $X$ and by \ref{sei} it is a proper subset.  If
  $x\in X-T$, then there is exactly one $y\in X$ such that
  $(x,y) \in \y_b$.  Necessarily $(x,y) \in Z_i$ and
  $x \not \in \pi_1 (Z_j)$ for $j\neq i$.  This shows that the
  component $Z_i$ is unique and also that $\pi_1 (Z_j) \subsetneqq X$
  for $j\neq i$.  Denote by $Z_b$ the component $Z_i$.  By Theorem
  \ref{mappotta} applied to $p:=\pi_1\restr{Z_b}: Z_b \ra X$ there are
  Zariski open subsets $Z^0 \subset Z_b$ and $X^0 \subset X$, such
  that both $Z^0$ and $X^0$ are smooth and $p: Z^0 \ra X^0$ is a local
  biholomorphism.  We can assume that $X^0 \subset X- T$. So
  $p \restr{Z^0}$ is injective, hence a biholomorphism.  It follows
  that $p : Z_b \ra X$ is a modification with center $T$, hence
  $f_b: = \pi_2 \circ p\meno : X \mero X$ is a meromorphic map and the
  graph of $f_b$ coincides with $Z_b$ by Lemma \ref {lemma-1}.
\end{proof}

\begin{remark}
  In general the map in \ref{trebis} is not necessarily a
  biholomorphism.  The point is that a bijective holomorphic is
  automatically biholomorphic only if the target is weakly normal, see
  e.g.  \cite[p. 310-11 and p. 358]{barlet-magnusson-vol-1}. So one
  can only assert that $\psi\restr{\psi\meno(\Omega)}$ is a
  biholomorphism on the weak normalization of $\aus$. This kind of
  problem is quite common in the study of cycle spaces.  Indeed the
  weak normalization goes back to \cite{andreotti-norguet-67}.

\end{remark}

For $b\in \aus$ we will denote by $\z_b$ be the unique irreducible
component of $\y_b$ such that
\begin{gather*}
  \pi_1( \z_b) = X.
\end{gather*}
We will call $Z_b$ the \emph{meromorphic component} of $\y_b$.  We
will denote by $f_b$ the meromorphic map such that $\Ga_{f_b} = \z_b$.
We have $b\in \au$ iff $f_b \in \ai$.  We also denote by $A_b$ the set
of points $x\in X$ such that $(\{x\}\times X) \cap \y_b$ contains more
than one point.  This means that
\begin{gather}
  \label{eq:6}
  \{b\} \times (X- A_b) = \Omega \cap (\{b\}\times X).
\end{gather}
In other words, if $\y_b = Z_b + \sum_{i=1}^r n_i Z_i$, then
\begin{gather*}
  A_b:=\inde(f_b) \cup \bigcup_{i=1}^r \pi_1(Z_i).
\end{gather*}
The intersection $\y_b \cap ((X-A_b)\times X) $ is the graph of the
holomorphic map $f_b\restr{X-A_b}$. Let $\Mero(X)$ denote the set of
meromorphic self-maps of $X$. We have constructed a map
\begin{gather}
  \label{mappona}
  \Phi:\aus \ra \Mero(X), \quad \Phi(b):= f_b.
\end{gather}

\begin{remark}
  In general the map $\Phi$ is not injective: different points
  $b, b'\in \de B:=\aus - \au$ can have the same meromorphic
  components, i.e. $Z_b=Z_{b'}$.  The fibres of the map
  \eqref{mappona} can be even of positive dimention.  We describe such
  an example for $X=\PP^n$ based on the results of Brion
  \cite[p. 621-622]{brion-completions}.
  Set $V=\C^{n+1}$ and $X=\PP^n =\PP(V)$. Fix a basis
  $\{v_1, \lds, v_{n+1} \}$ of $V$.  Let $J =\{j_1 < \cds < j_r\}$ be
  a subset of $\{1, \lds, n\}$. Define
  \begin{gather*}
    \begin{aligned}
      &  V_0:=\spam (v_1, \lds, v_{j_1 }),\\
      &  V_i := \spam (v_{j_1+1} , \lds, v_{j_{i+1}}),  \text { for } 1 \leq i < r,\\
      & V_r :=\spam (v_{j_r+1} , \lds, v_{n+1} ),
    \end{aligned}
    \\
    V_{< k} :=\underset{i<k}{\oplus} V_i, \quad
    V_{> k} :=\underset{i>k}{\oplus} V_i, \quad \text{for } k=0, \lds,
    r,
    \\
    \tilde{Z}_i = \{(x,y,\ell) \in \PP^n \times \PP^n \times \PP(V_i):
    x \in \PP(V_{<i} + \ell), y\in \PP(V_{>i} + \ell)\}, \\ \text{for
    } i=0, \lds, r.
  \end{gather*}
  Denote by
  $\pi_{12} : \PP^n \times \PP^n \times \PP(V_i) \lra \PP^n \times
  \PP^n$ the projection.  Then the map \begin{gather*} \rho_i : =
    \pi_{12} \restr{\tilde{Z}_i} : \tilde{Z}_i \lra Z_i:=
    \pi_{12}(\tilde{Z}_i)
  \end{gather*}
  is a modification.  Set
  \begin{gather*}
    \Ga_J:=\sum_{i=0}^r Z_i \in C_n (X\times X).
  \end{gather*}
  We have $\pi_1(Z_i) = X$ iff $i=r$ and $\pi_2(Z_i) = X$ iff $i=0$.
  Thus the meromorphic component of $\Ga_J$ is $Z_r$.
  Since \begin{gather*} \tilde{Z}_r = \{ (x,y,y) \in \PP^n \times
    \PP(V_r) \times \PP(V_r) :
    x \in \PP(V_{<r} + y )\},\\
    Z_r=\{(x,y) \in \PP^n \times \PP(V_r): x \in \PP(V_{<r} + y )\},
  \end{gather*}
  the meromorphic component $Z_r$ only depends on $V_{<r}$ and $V_r$
  and there are infinitely many cycles $b \in \aus$
  sharing the same connected component.
\end{remark}

\begin{remark}
  The fibres of the map $\Phi$ in \eqref{mappona} give an equivalence
  relation $\sim$ on $\aus$ and it would be nice to prove that the
  quotient of $\aus$ with respect to this equivalence relation has the
  structure of complex analytic space.  This is indeed the case when
  $X=\PP^n$.  In fact, as shown above, the meromorphic component of a
  cycle $\Ga_J$ depends only on $V_{<r}$ e $V_r$. Moreover $\Ga_J$
  coincides with the graph of the projection onto $\PP(V_r)$ with
  centre $\PP(V_{r})$.  To get the whole of $\aus$ we let
  $ \Gl(n+1,\C)$ act on the left and on the right on the various
  cycles $\Ga_J$. In this way we get the graphs of all the elements of
  $\PP(M_{n+1}(\C))$. Thus in this case
  $\aus/\sim\, = \PP(M_{n+1}(\C))$. Unfortunately dealing with the
  general case seems rather delicate.  The fibres of $\Phi$ can be of
  different dimensions, by the previous remark.  So \cite[Satz
  1(b)]{kaup} shows that in general the relation $\sim$ is not
  open. Therefore to prove that $\aus/\sim$ is a complex space one
  cannot apply directly the main theorem of \cite{kaup}, which says
  that the quotient of a seminormal complex space by an open analytic
  relation is a complex space.
\end{remark}

  \begin{remark}
    In a series of papers Neretin gave a new construction of
    compactifications of reductive groups and symmetric spaces. In
    particular he gave a compactification of $\PGL(n+1,\C)$ via
    so-called \emph{hinges}, see \cite{neretin,neretin2}. This
    compactfication is a semigroup and it coincides with the De
    Concini-Procesi compactification \cite{deconcini-procesi}. By
    Brion's results \cite{brion-completions} it also coincides with
    $\aus$ for $X=\PP^n$. It would be very interesting to see if also
    for a general $X$ the space $\aus$ or some compactification
    related to it is a semigroup.  This would be related to the
    philosophy put forward at pages 1 and 9-11 of
    \cite{neretin-libro}. We hope to come back to these questions in
    the future.
  \end{remark}

  Consider now the following action of $\ai $ on $X\times X$:
  \begin{equation*}
    g \cd (x, y) :=  ( x, g\cd y).
  \end{equation*}
  This action induces a corresponding action on $C_n (X\times X)$: for
  $\Ga \in C_n(X\times X)$ set
  \begin{equation}
    \label{azione-suC}
    g \cd \Ga : = ( \id_X \times g) _* \Ga.
  \end{equation}
  This action preserves $\aus$.

  \begin{teo}
    \label{nondominant}
    For $b \in \B$ the stabilizer $\ai _b$ for the action
    \eqref{azione-suC} has positive dimension.  Moreover
    $f_b (X) = \pi_2(Z_{b}) \subset X^{\ai_b}$.  In particular
    $f_b:X \dasharrow X$ is non-dominant.
  \end{teo}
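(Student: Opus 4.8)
The plan is to exploit the orbit--stabilizer relation for the holomorphic action \eqref{azione-suC} of the complex Lie group $\ai$ on $\aus$. First I would note that this action preserves both $\au$ and $\B$: for $f\in\ai$ one computes $g\cd\Ga_f=(\id_X\times g)_*\Ga_f=\Ga_{gf}$, so $g\cd\au=\au$, and since the action also preserves $\aus$ it preserves $\B=\aus-\au$. Hence for $b\in\B$ the whole orbit $\ai\cd b$ is contained in $\B$. Now $j$ is a holomorphic embedding with open dense image $\au$ in the irreducible space $\aus$, so $\dim\aus=\dim\au=\dim\ai$, while $\B$ is a proper analytic subset and therefore $\dim\B<\dim\ai$. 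Applying the orbit--stabilizer dimension formula to the orbit map $g\mapsto g\cd b$ gives $\dim(\ai\cd b)=\dim\ai-\dim\ai_b$, and since $\ai\cd b\subseteq\B$ we obtain $\dim\ai_b\geq\dim\ai-\dim\B>0$, which is the first assertion.

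For the inclusion $f_b(X)=\pi_2(Z_b)\subseteq X^{\ai_b}$, the first step is to show that every $g\in\ai_b$ fixes the meromorphic component $Z_b$ setwise. Writing $\y_b=\sum_i n_i Z_i$ and using that $\id_X\times g$ is a biholomorphism, one has $g\cd\y_b=\sum_i n_i\,(\id_X\times g)(Z_i)$, so $g\cd\y_b=\y_b$ forces $g$ to permute the irreducible components of $\y_b$ preserving multiplicities. Since $\pi_1((\id_X\times g)(Z))=\pi_1(Z)$, the image $(\id_X\times g)(Z_b)$ again has $\pi_1=X$ and multiplicity $1$, so by the uniqueness in Lemma \ref{cavallo}\ref{septimius} it equals $Z_b$.

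I would then transfer this to a pointwise statement. For $x\in X-A_b$ we have $\y_b\cap(\{x\}\times X)=\{(x,f_b(x))\}$ by \eqref{eq:6}, and $(x,f_b(x))\in Z_b$. Applying $\id_X\times g$ and using $(\id_X\times g)(|\y_b|)=|\y_b|$ gives $(x,g\cd f_b(x))\in\y_b\cap(\{x\}\times X)$, hence $g\cd f_b(x)=f_b(x)$. Thus $g$ fixes $f_b(X-A_b)$ pointwise. Since $X-A_b$ is dense in $X$ and the graph of $f_b\restr{X-A_b}$ is dense in $Z_b$ (Lemma \ref{lemma-1}), we have $\pi_2(Z_b)=\overline{f_b(X-A_b)}$, so continuity of $g$ yields $\pi_2(Z_b)\subseteq X^g$ for all $g\in\ai_b$, i.e. $\pi_2(Z_b)\subseteq X^{\ai_b}$. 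Finally, $\ai$ acts effectively on $X$, so any $g\neq\id$ in the positive-dimensional group $\ai_b$ satisfies $X^g\subsetneqq X$; hence $X^{\ai_b}$ is a proper analytic subset and $f_b(X)=\pi_2(Z_b)$ is non-dominant.

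The step I expect to be most delicate is the justification of the orbit--stabilizer dimension formula in this analytic setting: one must know that \eqref{azione-suC} is a genuine holomorphic action of $\ai$ on the cycle space and that its orbits are immersed complex submanifolds of dimension $\dim\ai-\dim\ai_b$. Once this, together with the strict inequality $\dim\B<\dim\aus$, is secured, the remainder is bookkeeping about how $g$ permutes the components of $\y_b$ and fixes the image of its meromorphic component.
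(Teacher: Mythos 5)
Your proposal is correct and follows essentially the same route as the paper: the positive-dimensionality of $\ai_b$ comes from the orbit of $b$ lying in the proper invariant analytic subset $\B$ of the irreducible space $\aus$, the stabilizer is shown to preserve the meromorphic component $Z_b$ via the uniqueness in Lemma \ref{cavallo}\ref{septimius}, and the inclusion $\pi_2(Z_b)\subset X^{\ai_b}$ is obtained by fixing $f_b(x)$ pointwise on a dense open set and passing to the closure of the graph. The only differences are cosmetic (you work with $X-A_b$ and $\y_b$ where the paper uses $X-\inde(f_b)$ and $Z_b$, and you spell out the component-permutation argument that the paper leaves implicit).
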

  \begin{proof}
    The map $j$ of \eqref{def-j} is equivariant with respect to the
    action of $\ai $ on itself by left multiplication and the action
    \eqref{azione-suC} on $C_n(X\times X)$:
    \begin{gather*}
      j (gh) = \Ga_{gh} = g \cd \Ga_h = g \cd j(h).
    \end{gather*}
    Thus $\au=j(\ai )$ is an orbit of $\ai $.  We know from
    Proposition \ref{lieber} that $\aus $ is irreducible and that
    $\B:= \aus - \au $ is a proper analytic subset of $\aus$.  Hence
    any irreducible component of $\B$ has dimension strictly less than
    $\dim\aus $.  Since $\B$ is invariant by the action, it follows
    that for $b\in\B$, $\dim \ai \cd b < \dim \au =\dim \ai$, so
    $\dim \ai _b >0$.

    Denote by $\y_b$ the cycle corresponding to $b$ and let $Z_b$ be
    the meromorphic component. If $g \in \ai$, then clearly
    $\pi_1 (g \cd \Ga_b) = \pi_1 (\Ga_b) $. Thus for $g \in \ai_b$,
    $g\cd Z_b = Z_b$.

    Let $Y:=\inde(f_b) \subset X$ be the indeterminacy locus of
    $f_b: X \mero X$.  If $x \in X \setminus Y$, then
    $(\{x\}\times X) \cap Z_b = \{(x, f_b(x))\}$.  If $h\in \ai_b$,
    then $h \cd (x, f(x)) = (x, hf(x)) \in Z_b$, so $hf(x) = f(x)$.
    This shows that $f_b(X \setminus Y) \subset X^{\ai_b}$.  Since
    $Z_b$ is the closure of $\{(x, y) \in (X-Y)\times X: y=f_b(x)\}$,
    we conclude that $\pi_2 (Z_b) \subset X^{\ai_b}$.

    Finally, since $\ai_b$ has positive dimension, it is not the
    trivial subgroup, so $X^{\ai_b}$ is a proper analytic subset of
    $X$. Therefore the image of $f_b$ is strictly smaller than $X$.
  \end{proof}

\begin{remark}
  A refinement of this theorem in the case of a reductive subgroup is
  given by Theorem \ref{nondominantG} below.
\end{remark}

Theorem \ref{main1} in the Introduction follows from the previous
theorem together with the following one.

\begin{teo}
  \label{convergenza}
  Let $X $ be a compact complex manifold in the class $\fuj$.  Let
  $\{b_j \}$ be a sequence in $ \aus$ converging to $ b \in \aus$.
  Then $f_{b_j} \to f_b$ uniformly on compact subsets of $X-A_b$.  In
  particular, if $\{g_j\}$ is a sequence in $\ai$, passing to a
  subsequence we can find $b\in \aus$ such that $g_j \to f_b$
  uniformly on compact subsets of $X-A_b$.
\end{teo}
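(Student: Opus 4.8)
The plan is to package everything into a single continuous ``evaluation'' map on $\Omega$ and then read off uniform convergence from its continuity via a tube-lemma argument. Concretely, let $\sigma:\Omega\ra\psi\meno(\Omega)$ be the inverse of the homeomorphism furnished by Lemma \ref{cavallo}\ref{trebis}, and set $F:=\pi_3\circ\sigma:\Omega\ra X$, where $\pi_3:\aus\times X\times X\ra X$ is the projection onto the last factor. Then $F$ is continuous, and for $(b,x)\in\Omega$ the point $\sigma(b,x)=(b,x,F(b,x))$ is the unique point of $|G_\aus|$ lying over $(b,x)$. I would first check that $F(b,x)=f_b(x)$ on all of $\Omega$: by \eqref{eq:6} the condition $(b,x)\in\Omega$ is equivalent to $x\in X-A_b$, and since $A_b=\inde(f_b)\cup\bigcup_i\pi_1(Z_i)$, the unique point of $\y_b$ over such an $x$ cannot lie on a non-meromorphic component $Z_i$ nor be an indeterminacy point; hence it lies on $Z_b$ and equals $(x,f_b(x))$. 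Thus $F$ restricts on each slice $\{b\}\times(X-A_b)$ to the holomorphic map $f_b\restr{X-A_b}$ (cf. Lemma \ref{cavallo}\ref{tre}).

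Next I would fix a distance $d$ on $X$ inducing its topology and a compact set $K\subset X-A_b$. Since $\{b\}\times K\subset\Omega$ by \eqref{eq:6} and $\Omega$ is open (Lemma \ref{cavallo}\ref{due}), the tube lemma, using compactness of $K$, produces an open neighbourhood $N$ of $b$ in $\aus$ with $N\times K\subset\Omega$. For every $b'\in N$ this gives $K\subset X-A_{b'}$, so $f_{b'}$ is defined and holomorphic on a neighbourhood of $K$ and $f_{b'}\restr{K}=F(b',\cdot)\restr{K}$. For $j$ large we have $b_j\in N$, and uniform convergence of $f_{b_j}$ to $f_b$ on $K$ then becomes a purely topological consequence of the continuity of $F$: if it failed there would be $\eps>0$, a subsequence and points $x_j\in K$ with $d(F(b_j,x_j),F(b,x_j))\geq\eps$; passing to a convergent subsequence $x_j\to x_\ast\in K$ and noting $(b_j,x_j)\to(b,x_\ast)$ inside the open set $\Omega$, continuity of $F$ forces $F(b_j,x_j)\ra F(b,x_\ast)$ and $F(b,x_j)\ra F(b,x_\ast)$, which contradicts the choice of $x_j$. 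This yields uniform convergence on compact subsets of $X-A_b$.

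For the ``in particular'' statement I would invoke compactness. Given $\{g_j\}\subset\ai$, set $b_j:=j(g_j)=\Ga_{g_j}\in\au$; since $\aus$ is a compact (metrizable) irreducible analytic space by Theorem \ref{C} and Proposition \ref{lieber}, after passing to a subsequence we may assume $b_j\ra b$ for some $b\in\aus$. As $g_j$ is a biholomorphism, $\y_{b_j}=\Ga_{g_j}$ is irreducible with meromorphic component equal to itself, so $f_{b_j}=g_j$ and $A_{b_j}=\vacuo$; the first part then applies verbatim and gives $g_j\ra f_b$ uniformly on compact subsets of $X-A_b$.

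The only genuinely delicate input is the continuity of the evaluation map $F$, and this has in effect already been secured by the homeomorphism statement in Lemma \ref{cavallo}\ref{trebis}; once $F$ is continuous, the remainder is the standard tube-lemma and uniform-continuity packaging on the compact set $K$. I expect the step needing the most care to be the verification that $F(b,x)$ really coincides with $f_b(x)$ throughout $\Omega$ — that is, that the unique intersection point always lands on the meromorphic component $Z_b$ — which is precisely where \eqref{eq:6} and the explicit description of $A_b$ are used.
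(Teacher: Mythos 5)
Your proposal is correct and follows essentially the same route as the paper: both arguments rest on the homeomorphism $\psi\restr{\psi\meno(\Omega)}$ from Lemma \ref{cavallo}, a tube-lemma neighbourhood $N\times K\subset\Omega$, the identification of the evaluation map with $f_{b'}$ on each slice via \eqref{eq:6}, and a standard continuity-plus-compactness argument for uniform convergence (the paper isolates that last step as a separate elementary lemma proved by a finite covering, whereas you argue by contradiction with sequential compactness, which is an immaterial variation). No gaps.
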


\begin{remark}
  In general the set $A_b$ is larger than the indeterminacy set of
  $f_b$ and the convergence holds only on $X-A_b$.  For example if
  $X=\PP^1$ and $g_j $ is the map $g_j(z)=j\cd z$, then $f_b$ maps
  every point of $\PP^1$ to $\infty$ and has no indeterminacy point,
  but convergence does not hold at $0\in A_b$.
\end{remark}

We start the proof with the following elementary observation.

\begin{lemma}
  Let $X$ and $Y$ be topological spaces and let $(Z,d)$ be a metric
  space.  Let $h: X\times Y \ra Z$ be a continuous map.  Let $\{x_n\}$
  be a sequence in $X$ converging to $\bar{x} \in X$. Set
  \begin{gather*}
    f_n(y):= h(x_n, y), \quad \bar{f}(y):=h(\bar{x}, y).
  \end{gather*}
  If $Y$ is compact, $f_n \to \bar{f}$ uniformly on $Y$.
\end{lemma}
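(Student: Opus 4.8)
The plan is to prove this final lemma by a standard uniform continuity argument exploiting the compactness of $Y$. The statement asserts that if $h:X\times Y\ra Z$ is continuous, $x_n\to\bar x$, and $Y$ is compact, then $f_n\to\bar f$ uniformly on $Y$, where $f_n(y)=h(x_n,y)$ and $\bar f(y)=h(\bar x,y)$. I would not attempt any clever trick here; the result is the topological analogue of the elementary fact that continuity plus compactness upgrades pointwise behaviour to uniform behaviour.

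First I would fix $\eps>0$ and aim to find $N$ such that $d(h(x_n,y),h(\bar x,y))<\eps$ for all $n\geq N$ and all $y\in Y$ simultaneously. For each $y\in Y$, continuity of $h$ at $(\bar x,y)$ gives an open neighbourhood of $(\bar x,y)$ on which $h$ differs from $h(\bar x,y)$ by less than $\eps/2$ in the metric $d$; by the definition of the product topology I may take this neighbourhood to be a product $U_y\times V_y$ with $U_y\ni\bar x$ open in $X$ and $V_y\ni y$ open in $Y$. Then I would cover $Y$ by the open sets $\{V_y\}_{y\in Y}$ and invoke compactness of $Y$ to extract a finite subcover $V_{y_1},\ldots,V_{y_m}$.

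Next I would set $U:=\bigcap_{k=1}^m U_{y_k}$, a finite intersection of neighbourhoods of $\bar x$, hence itself an open neighbourhood of $\bar x$ in $X$. Since $x_n\to\bar x$, there is $N$ with $x_n\in U$ for all $n\geq N$. Now for any $y\in Y$ and any $n\geq N$, pick an index $k$ with $y\in V_{y_k}$; then both $(x_n,y)$ and $(\bar x,y)$ lie in $U_{y_k}\times V_{y_k}$, so by the triangle inequality
\begin{gather*}
  d\bigl(h(x_n,y),h(\bar x,y)\bigr)\leq d\bigl(h(x_n,y),h(\bar x,y_k)\bigr)+d\bigl(h(\bar x,y_k),h(\bar x,y)\bigr)<\frac{\eps}{2}+\frac{\eps}{2}=\eps,
\end{gather*}
where I have used that $h(\bar x,y_k)$ is within $\eps/2$ of $h$ evaluated at any point of $U_{y_k}\times V_{y_k}$. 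Since the bound is independent of $y$, this yields $\sup_{y\in Y}d(f_n(y),\bar f(y))\leq\eps$ for $n\geq N$, which is exactly uniform convergence.

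I do not anticipate a genuine obstacle in this argument; the only point requiring any care is the bookkeeping in the choice of the product neighbourhoods and their intersection, and making sure the neighbourhood $U_{y_k}$ is chosen so that $\bar x\in U_{y_k}$ (so that $h(\bar x,y_k)$ serves as the common comparison value for both $h(x_n,y)$ and $h(\bar x,y)$). The essential ingredients are simply the definition of the product topology, compactness of $Y$ to pass from an open cover to a finite subcover, and the convergence $x_n\to\bar x$ to enter the finite intersection $U$ eventually.
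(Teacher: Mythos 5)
Your argument is correct and is essentially the same as the paper's: both choose, for each point of $Y$, a product neighbourhood of $(\bar{x},y)$ on which $h$ stays within $\eps/2$ of the value at the centre, extract a finite subcover of $Y$, intersect the corresponding neighbourhoods of $\bar{x}$, and conclude by the triangle inequality through the comparison value $h(\bar{x},y_k)$. No differences worth noting.
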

\begin{proof}
  Fix $\eps >0$. Given $y_0\in Y$, continuity of $h$ yields open
  neighbourhoods $U$ of $\bar{x}$ in $X$ and $V$ of $y_0$ in $Y$, such
  that $d(h(x, y), h(\bar{x}, y_0)) < \eps /2$ for any
  $(x,y) \in U\times V$.  Since $Y$ is compact we can cover it with a
  finite number of neighbourhoods like $V$, that is we can find a list
  $\{(U_i, V_i, y_i)\}_{i=1}^n$ such that $U_i$ is open in $X$, $V_i$
  is open in $Y$, $\bar{x} \in U_i$, $y_i \in V_i$, $\cup_i V_i = Y$
  and
  \begin{gather}
    \label{eq:1}
    d(h(x, y), h(\bar{x}, y_i)) < \frac{\eps}{2} , \quad \forall (x,y)
    \in U_i \times V_i.
  \end{gather}
  Then $W:=\cap _i U_i$ is a neighbourhood of $\bar{x}$, so there is
  $n_0$ such that for any $n\geq n_0$, $x_n \in W$.  If $y\in Y$,
  there is $i$ such that $y\in V_i$. Hence for $n\geq n_0$ using twice
  \eqref{eq:1} we get
  \begin{gather*}
    d (h(x_n, y), h(\bar{x}, y)) \leq d (h(x_n, y), h(\bar{x}, y_i)) +
    d (h(\bar{x}, y), h(\bar{x}, y_i)) < \eps.
  \end{gather*}
\end{proof}

\begin{proof}
  [Proof of Theorem \ref{convergenza}] Fix a compact subset
  $K \subset X-A_b$.  By \eqref {eq:6} this means that
  $\{b\} \times K \subset \Omega$, so there is an open subset
  $V \subset \aus$ such that $V \times K \subset \Omega$.  There is
  $n_0$ such that $b_j \in V$ for $n\geq n_0$.  Recall from Lemma
  \ref{cavallo} \ref{tre} that $\psi\restr{\psi\meno(\Omega)} $ is a
  homeomorphism. In particular we can invert
  $f:=\psi\restr{V\times K} $. Hence we have a well-defined map
  \begin{gather*}
    h:= \pi_3 \circ f\meno : V\times K \lra X.
  \end{gather*}
  By Lemma \ref{convergenza} $h(b_j, \cd ) \to h(b, \cd)$ uniformly on
  $X$ (with respect to any metric inducing the topology).  But if
  $n\geq n_0$, $\{b_j\} \times K \subset \Omega$, i.e.
  $K \subset X -A_{b_j}$.  Hence $h(b_j , \cd ) = f_{b_j}$ and
  $h(b , \cd ) = f_{b}$. We have proved that $f_{b_j} \to f_b$
  uniformly on $K$.
\end{proof}

\begin{remark}
  \label{ciuccia}
  It is important to notice that Theorem \ref{convergenza} does not
  hold without the hypothesis $X \in \fuj$.  Consider the following
  example already studied in \cite{ueno-class-C}.  Set
  $W:= \C^2 -\{0\}$ and choose $\alfa\in \C$ with $0<|\alfa|<1$.  Let
  $\alfa$ act on $W$ by the rule
  $\alfa\cd (x,y) := (\alfa x, \alfa y)$.  Then
  $H_\alfa : = W / \langle \alfa \rangle $ is a Hopf surface and
  $\Aut(H_\alfa) = \Gl(2,\C) / \langle \alfa \rangle$.  Set
  \begin{gather*}
    g:= \begin{pmatrix} 1 & 0 \\ 0 & \alfa
    \end{pmatrix},
  \end{gather*}
  and consider the sequence $\{g^n\} $ in $\Aut(H_\alfa)$.  Set
  $E_1 = \{ [x,y] \in H_\alfa : x=0\}$ and
  $E_2 = \{ [x,y] \in H_\alfa : y=0\}$. These are elliptic curves
  isomorphic to $\C / (\Zeta + \Zeta a)$ where
  $\exp(2\pi i a ) = \alfa$.  It is easy to check that for
  $p=[x,y] \not \in E_1$ we have
  $g^n (p) \to \phi(p) := [x,0] \in E_2$.  While for
  $p= [0,y] \in E_1$, $g^n (p) =p$.  So the limit exists for every
  $p\in H_\alfa$.  On the other hand the map
  $\phi: H_\alfa - E_1 \to E_2$ is not meromorphic.  In fact call
  $\Gamma$ its graph.  We claim that
  $\overline{\Ga} = \Ga \, \cup\, E_1\times E_2$.  It is clear that
  $\Gamma \subset H_\alfa \times E_2$ and that $\Gamma$ is closed in
  $(H-E_1) \times E_2$. Moreover if
  $([0,y], [u,0]) \in E_1 \times E_2 $, then
  \begin{gather*}
    ([\alfa^n u, y ] , \phi ([\alfa^n u, y ]) )= ([\alfa^n u, y ], [u,
    0] ) \to ([0,y], [u,0]).
  \end{gather*}
  So $E_1\times E_2 \subset \overline{\Gamma}$. This proves that
  indeed $\overline{\Ga} = \Ga \, \cup\, E_1\times E_2$.  Now we show
  that $\overline{\Gamma}$ is not analytic.  Call
  $\pi: W\times W \ra H_\alfa \times H_\alfa$ the projection.  Fix
  $p_0 =([0,y_0],[u_0,0]) \in E_1\times E_2$.  Let $U$ be a small
  neighbourhood of $(0,y_0,u_0,0)$ in $W\times W$ such that
  $\pi\restr{U}$ is a biholomorphism.  Then
  \begin{gather*}
    \pi\meno \overline{\Gamma} \cap U = (\{(0, y, u, 0)\}\cap U) \cup
    \bigcup_{n\in \Zeta} ( \{ (x, y, \alfa^n x, 0) \} \cap U),
  \end{gather*}
  which is not analytic.  One can also deduce that $\overline{\Ga}$ is
  not analytic from the fact that
  $E_1 \times E_2 \subset \overline{\Ga} - \Ga$ and
  $\dim E_1\times E_2 = \dim \Ga$.
\end{remark}

\begin{remark}
  \label{convergi}
  In the literature there are several notions of convergence for
  meromorphic maps, see for example \cite {iva,in}.  It would be
  interesting to compare the convergence in $\aus$ with these notions
  of convergence. We leave this for further inquiry.
\end{remark}

\section{Compactifications of reductive subgroups}
\label{ridut}

In this section we consider complex reductive subgroups of $\ai$.
Since we will only consider \emph{complex} reductive subgroups, we
will often refer to them simply as \emph{reductive subgroups} of
$\ai$.

Our goal is to construct compactifications of the connected reductive
subgroups of $\ai$ that act trivially on $\Alb X$.  We will take
advantage of Fujiki's deep work in \cite{fujiki-automorphisms}.  We
start by recalling some definitions introduced in that paper.

Let $G$ be a connected complex Lie group. A \emph{meromorphic
  structure} on $G$ is an analytic compactification $G^*$ (i.e. a
compact analytic space $G^*$ containing $G$ as a dense open subset)
such that the product map and the inversion extend as meromorphic maps
$G^* \times G^* \mero G^*$ and $G^* \mero G^*$.  Two such structures
$G^*$ and $G^{**}$ are \emph{equivalent} if $\id_G$ extends to a
bimeromorphic map $G^* \mero G^{**}$.  An equivalence class of
meromorphic structures is called a \emph{meromorphic group}.  We will
denote a meromorphic group by $G$ or $G^*$ or $(G,G^*)$.

If $G^*$ is a meromorphic structure on $G$, a subgroup $H\subset G$ is
\emph{meromorphic} if the closure of $H$ in $G^*$ is an analytic
subset.  If $G^{**}$ is another meromorphic structure which is
equivalent to $G^*$, then $H$ is a meromorphic subgroup with respect
to $G^*$ iff it is meromorphic with respect to $G^{**}$.  To prove the
last statement one uses Lemma \ref{chiusura}. Thus the notion of
meromorphic subgroup depends only on the ambient meromorphic group.

If $G$ is a linear algebraic group over $\C$, then it has a canonical
meromorphic structure given by taking a faithful representation of
$G \ra \Sl(V)$ and letting $G^*$ be the closure of $G$ inside
$\PP(\End V)$.  This structure is well-defined, i.e. does not depend
on the choice of the representation
\cite[Rmk. 2.3]{fujiki-automorphisms}. When $G$ is endowed with this
structure we say that it is \emph{meromorphically linear}.

If $G$ is a connected complex Lie group with a meromorphic structure
$G^*$ and $X$ is a complex space we say that an action
$\sigma : G \times X \lra X$ of $G$ on $X$ is \emph{meromorphic} if
$\sigma$ extends to a meromorphic map $G^* \times X \mero X$.

\begin{prop}
  \label{bimer}
  Let $(G,G^*)$ be a meromorphic group. Assume that $G$ acts on the
  compact complex spaces $X$ and $Y$ and that $f: X \mero Y$ is a
  $G$-equivariant bimeromorphic map. Then the action on $X$ is
  meromorphic iff the action on $Y$ is meromorphic.
\end{prop}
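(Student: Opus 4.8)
The plan is to reduce to one implication and then to argue entirely with graphs, using the bimeromorphy of $f$ to stay inside the analytic category. Since $f$ is bimeromorphic, its meromorphic inverse $f\meno : Y \mero X$ is again $G$-equivariant, so the hypotheses are symmetric in $X$ and $Y$ and it suffices to prove that if the action $\sigma_X : G\times X \ra X$ is meromorphic, then so is $\sigma_Y : G\times Y\ra Y$. Throughout I assume $X$ and $Y$ irreducible (as they are in the applications); since $G$ is connected, $G^*$ is the closure of the irreducible set $G$ and hence irreducible, so all the products below are reduced, irreducible and compact, and $G^*-G$ is a proper analytic subset.

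Assume then that $\sigma_X$ extends to a meromorphic map $G^*\times X \mero X$ with graph $\Ga_{\sigma_X}\subset (G^*\times X)\times X$, an irreducible analytic subset. The key structural input is the equivariance identity $f(g\cd x) = g\cd f(x)$, valid on a dense open subset of $G\times X$. I would introduce the meromorphic map
\[
  \Psi := \id_{G^*}\times f \times f : (G^*\times X)\times X \mero (G^*\times Y)\times Y,
\]
which is meromorphic because its graph is, after reordering factors, a product of graphs of meromorphic maps. Its indeterminacy set is contained in the proper analytic subset $A$ built from $G^*\times \inde(f)\times X$ and $G^*\times X\times \inde(f)$. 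A general point $((g,x),g\cd x)$ of $\Ga_{\sigma_X}$ is sent by $\Psi$ to $((g,f(x)),f(g\cd x)) = ((g,f(x)),g\cd f(x))$, i.e.\ to a point of the graph of $\sigma_Y$; writing $y=f(x)$ this reads $((g,y),g\cd y)$. Thus $\Psi$ carries a dense open subset of $\Ga_{\sigma_X}$ onto a dense subset of the closure of the graph of $\sigma_Y$.

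The main work, and the only genuinely delicate point, is that this image is analytic: composing the three meromorphic maps $f$, $\sigma_X$ and $\id\times f$ need not produce a meromorphic map in general. Here bimeromorphy saves the day through Lemma \ref{chiusura}. Choosing $g\in G$ and $x\in X$ with $x\notin \inde(f)$ and $g\cd x\notin \inde(f)$ — a dense open condition — shows $\Ga_{\sigma_X}\not\subset A$, so Lemma \ref{chiusura} applies with $W=\Ga_{\sigma_X}$ and yields that $\overline{\Psi(\Ga_{\sigma_X}-A)}$ is an analytic subset of $(G^*\times Y)\times Y$. By the previous paragraph this analytic set is precisely the closure of the graph of $\sigma_Y$, regarded as a holomorphic map on the dense open subset $G\times Y\subset G^*\times Y$, whose complement $(G^*-G)\times Y$ is a proper analytic subset.

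Finally I would invoke Lemma \ref{meromorfason}, with the lemma's $X$ equal to $G^*\times Y$, its $Y$ equal to $Y$, its $S$ equal to $(G^*-G)\times Y$, and its map equal to $\sigma_Y\restr{G\times Y}$: since the graph closure is analytic, $\sigma_Y$ extends to a meromorphic map $G^*\times Y \mero Y$, i.e.\ the action on $Y$ is meromorphic. Applying the same argument to $f\meno$ gives the reverse implication, completing the proof. The two ingredients I expect to require the most care are the verification $\Ga_{\sigma_X}\not\subset A$ (so that the composition is actually defined) and the identification of $\overline{\Psi(\Ga_{\sigma_X}-A)}$ with the graph of $\sigma_Y$; everything else is bookkeeping with graphs and the equivariance identity.
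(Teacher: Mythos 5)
Your proposal is correct and follows essentially the same route as the paper: both use the map $\id_{G^*}\times f\times f$ together with Lemma \ref{chiusura} applied to the graph of the extended action on $X$, identify the resulting analytic image with the closure of the graph of the action on $Y$ via the equivariance identity on a dense Zariski-open set, and conclude by symmetry. The only cosmetic difference is that the paper takes $A$ to be the complement of $G\times X^0\times X^0$ (with $X^0, Y^0$ the open sets where $f$ is a biholomorphism) rather than building it from $\inde(f)$, which changes nothing of substance.
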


\begin{proof}
  Let $X^0$ and $Y^0$ be Zariski open subsets such that
  $f: X^0 \lra Y^0$ is a biholomorphism. Equivariance is understood in
  the following sense: if $x\in X^0$ and $g\cd x \in X^0$, then
  $f(g\cd x) = g\cd f(x)$.  Denote by $\sigma: G\times X \ra X$ the
  action on $X$ and by $\tau: G\times Y \ra Y $ that on $Y$.  Set
  $F:=\id_{G^*}\times f\times f : G^* \times X^2 \mero G^*\times X^2$.
  Consider the set
  $\Ga':=\{(g,x_1,x_2) \in G\times X ^2: x_1, x_2\in X^0, x_2=g\cd
  x_1\}$.  Since $\Ga'=\Ga_\sigma \cap (G\times X^0 \times X^0)$, it
  is a Zariski open subset of $\Ga_\sigma$. It is clearly non-empty
  since $(1, x, x) \in \Ga'$ for any $x\in X^0$.  Therefore it is
  dense in $\Ga_\sigma$.  The same holds for
  $\Ga''=\Ga_\tau \cap (G\times Y^0 \times Y^0)$: this is a dense
  Zariski open subset of $\Ga_\tau$.  The map $F$ is defined on
  $\Ga'$.  The equivariance and the hypothesis on $X^0$ and $Y^0$
  imply that $F(\Ga') = \Ga''$.  Denote by $W'$ the closure of
  $\Ga_\sigma$ in $G^*\times X \times X$.  If the action of $G$ on $X$
  is meromorphic, $W'$ is an analytic subset of $G^*\times X\times X$
  by Lemma \ref{lemma-1}.  Since $\Ga_\sigma$ is closed in
  $G\times X \times X$, we have
  $W' \cap ( G\times X \times X) = \Ga_\sigma \cap ( G\times X \times
  X) $.  Let $A$ be the complement of $G\times X^0 \times X^0$ in
  $G^*\times X\times X$.  $A$ is an analytic subset and it contains
  $\inde(F)$.  The set $W'$ is irreducible and it is not contained in
  $A$.  So Lemma \ref {chiusura} implies that
  $W'':= \overline{ F(W'-A)}$ is an analytic subset of
  $G^*\times Y\times Y$.  But by the definition of $\Ga'$ we have
  $\Ga'=W'-A$.  So $W'' = \overline{\Ga''}$. But we know that $\Ga''$
  is dense in $\Ga_\tau$, so
  $\Ga'' \subset \Ga_\tau \subset \overline{\Ga''} $. This finally
  shows that $\overline{\Ga}_\tau = W''$ is analytic, i.e. the action
  on $Y$ is meromorphic.
\end{proof}

Assume that $X$ is a compact complex manifold.  Let $\Fu(X)$ denote
the irreducible component of the Douady space $\Dou (X\times X)$
containing the diagonal $\Delta$. We let $\Fu(X)_\red$ denote the
reduction of $\Fu(X)$.  We recall some fundamental results of
  Fujiki.

\begin{teo}
  [Fujiki]\label{fujiki} If $X \in \fuj$, then $\Fu(X)_\red$ is a
  meromorphic structure on $\ai$, called the \emph{natural}
  meromorphic structure.  Moreover there is an exact sequence of
  meromorphic groups
  \begin{gather*}
    0 \to L(X) \to \ai \stackrel{\alfa}{\lra} T(X)\to 0
  \end{gather*}
  where $L(X)$ is meromorphically linear and $T(X)$ is a torus.
\end{teo}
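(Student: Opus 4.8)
The plan is to treat the two assertions separately. For the meromorphic structure I would first build the compactification in the Douady space, in exact parallel with Proposition \ref{lieber}: the assignment $g\mapsto[\Ga_g]$ embeds $\ai$ holomorphically into $\Dou(X\times X)$ as an open subset, and by definition $\Fu(X)$ is the irreducible component through the diagonal $\Delta=[\Ga_{\id}]$. The Douady-space analogue of Theorem \ref{C} (Fujiki--Douady, Campana) makes $\Fu(X)$ compact and of class $\fuj$. Since $j(\ai)$ is a single two-sided translation orbit, hence open in the irreducible space $\Fu(X)$, it is dense, so $\Fu(X)_\red$ is an analytic compactification of $\ai$.

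Next I would extend the group operations. Inversion is immediate: the transposition $t(x,y)=(y,x)$ is a biholomorphism of $X\times X$ carrying $\Ga_g$ to $\Ga_{g\meno}$ and fixing $\Delta$, so $t_*$ is a biholomorphism of $\Dou(X\times X)$ preserving $\Fu(X)$ and restricting to a holomorphic extension of $g\mapsto g\meno$. For multiplication I would use composition of graphs: with coordinates $(x,y,z)$ on $X^3$, the correspondence $(Z_1,Z_2)\mapsto \pi_{13}\bigl(\pi_{23}\meno(Z_1)\cap \pi_{12}\meno(Z_2)\bigr)$ sends $(\Ga_g,\Ga_h)$ to $\Ga_{gh}$ and is holomorphic over the dense open locus $\au\times\au$. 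Because $\Fu(X)_\red$ lies in class $\fuj$, so does the threefold product $\Fu(X)_\red\times\Fu(X)_\red\times\Fu(X)_\red$, and the compactness of the relevant component (the mechanism behind Theorem \ref{C}) forces the closure of the multiplication graph to be analytic; Lemma \ref{meromorfason} then yields a meromorphic extension $\Fu(X)_\red\times\Fu(X)_\red\mero\Fu(X)_\red$. This equips $\ai$ with the asserted meromorphic structure, independent of the reduction.

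For the exact sequence I would produce the homomorphism from the Albanese. Since $\ai$ is connected it acts trivially on $H^{1,0}(X)$, hence every $g\in\ai$ induces a \emph{translation} of $\Alb X$; recording this translation defines a holomorphic homomorphism $\alfa:\ai\to\Alb X$ whose image $T(X)$ is a connected subgroup of a complex torus, hence a subtorus, and whose kernel $L(X)=\{g:\alb\circ g=\alb\}$ consists of the automorphisms acting trivially on $\Alb X$. As $T(X)$ is compact its meromorphic structure is trivial, so $\alfa$ is automatically a morphism of meromorphic groups, and the sequence is exact on the level of underlying groups.

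The main obstacle is the remaining claim that $L(X)$ is \emph{meromorphically linear}. This is the deep part of Fujiki's theorem, and the strategy is to exhibit a faithful finite-dimensional representation $L(X)\hookrightarrow\Sl(V)$: exploiting that $L(X)$ preserves the Albanese fibration fibrewise, one lets it act on a suitable finite-dimensional space of global sections, whose faithfulness rules out any residual torus quotient and forces $L(X)$ to be linear algebraic. Its canonical meromorphic structure is then the closure of $L(X)$ in $\PP(\End V)$, and a final application of Lemma \ref{chiusura} checks that this structure is compatible with the one inherited from $\Fu(X)_\red$, so that $0\to L(X)\to\ai\to T(X)\to 0$ is indeed an exact sequence of meromorphic groups.
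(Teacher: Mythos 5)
First, note that the paper does not prove this statement at all: it is quoted as a theorem of Fujiki, with the proof delegated to \cite[Prop.~2.2]{fujiki-automorphisms} and \cite[Thm.~5.5]{fujiki-automorphisms}, so there is no in-paper argument to compare yours with. What can be assessed is whether your reconstruction would actually establish the result, and it would not, for two reasons. The first gap is the extension of the multiplication. You assert that because $\Fu(X)_\red$ is in class $\fuj$, the compactness of the relevant component ``forces the closure of the multiplication graph to be analytic.'' Compactness of irreducible components of cycle or Douady spaces says nothing about closures of arbitrary locally closed analytic sets: Remark~\ref{ciuccia} exhibits a holomorphic map whose graph has non-analytic closure (there for a Hopf surface, but the point is that analyticity of such a closure is never free). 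To make this step work one must exhibit the composition of correspondences $(Z_1,Z_2)\mapsto \pi_{13}\bigl(\pi_{12}\meno(Z_1)\cap\pi_{23}\meno(Z_2)\bigr)$ as an analytic family over the locus where the intersection is proper and then invoke the universal property of the Douady (or cycle) space together with a genuine extension argument; this is precisely the content of Fujiki's Proposition~2.2 and cannot be obtained by citing compactness alone.

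The second and deeper gap concerns the exact sequence. You define $L(X)$ as the full kernel $\{g:\alb_X\circ g=\alb_X\}$ of the Jacobi morphism, but the paper's very next Proposition records that $L(X)$ is only a \emph{finite index} subgroup of $\ker\phi_*$ --- a statement that would be vacuous under your definition, so your $L(X)$ is not Fujiki's. More importantly, the assertion that $L(X)$ is \emph{meromorphically linear} (that the natural structure restricted to it is equivalent to the closure in $\PP(\End V)$ for some faithful representation) is the substance of Theorem~5.5 of \cite{fujiki-automorphisms}. Your paragraph on this point (``one lets it act on a suitable finite-dimensional space of global sections'') names no such space --- on a general manifold in $\fuj$ there is no evident ample bundle or canonical linearization --- so the heart of the theorem is asserted rather than proved. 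The remaining pieces of your outline (the embedding into $\Dou(X\times X)$, inversion via the transposition, $T(X)$ being a subtorus) are correct, but they are the easy parts.
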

See \cite[Prop. 2.2 p. 231]{fujiki-automorphisms} and
\cite[Thm. 5.5]{fujiki-automorphisms}.  If $H\subset \ai$, we say that
$H$ is a \emph{meromorphic subgroup with the natural structure} if it
is a meromorphic subgroup of $\Fu(X)_\red$, i.e. if $\bar{H}$ is an
analytic subset of $\Fu(X)_\red$.

Let $\Alb X $ be the Albanese torus of $X$.  Since $\Alb X$ is a
compact torus, the group $A(X):=\Aut^0(\Alb X)$ is simply the group of
translations of $\Alb X$.  If $x_0\in X$ is fixed, one defines an
Albanese map $\alb_X: X \ra \Alb X$ with $\alb(x_0) = 0$ and a
homomorphism
\begin{gather*}
  \Aut(X) \lra \Aut(\Alb X), \quad g \mapsto A_g
\end{gather*}
such that $\alb_X \circ g = A_g \circ \alb_X$ for every
$g \in \Aut(X)$ \cite[p. 101]{akhiezer-libro}.  The \emph{Jacobi
  morphism} $\phi_*: \ai \lra A(X)$ is defined as the restriction of
the morphism $g\mapsto A_g$ to the connected components of the
identity.

\begin{prop} [\protect{\cite[Thm. 5.5 (2) p.
    251]{fujiki-automorphisms}}] If $X\in \fuj$, then $L(X)$ is a
  finite index subgroup of $\ker \phi_*$.
\end{prop}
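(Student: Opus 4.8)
The plan is to establish the two halves of the assertion separately: first that $L(X) \subseteq \ker\phi_*$, and then that the index $[\ker\phi_* : L(X)]$ is finite. Throughout I will use that, in Fujiki's framework, the Jacobi morphism $\phi_* : \ai \to A(X)$ is a morphism of \emph{meromorphic} groups, the torus $A(X)$ carrying its tautological compact meromorphic structure; this meromorphicity is what makes the first step work.

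For the inclusion $L(X)\subseteq\ker\phi_*$ I would show that the restriction $\phi_*\restr{L(X)} : L(X) \to A(X)$ is trivial. Since $L(X)$ is meromorphically linear, a faithful representation $L(X)\hookrightarrow\Sl(W)$ realises its meromorphic structure as the closure $L(X)^*=\overline{L(X)}$ in $\PP(\End W)$, a projective variety birational to the rational variety $L(X)$; hence any resolution $\widetilde{L(X)^*}$ satisfies $H^0(\widetilde{L(X)^*},\Omega^1)=0$. Because $\phi_*$ is meromorphic, $\phi_*\restr{L(X)}$ extends to a meromorphic map $L(X)^*\mero A(X)$, which after resolving indeterminacy becomes a holomorphic map to $A(X)$. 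Pulling back the translation-invariant holomorphic $1$-forms that trivialise the cotangent bundle of $A(X)$ yields holomorphic $1$-forms on $\widetilde{L(X)^*}$, all of which vanish; thus the differential is identically zero and the map is constant, so the homomorphism $\phi_*\restr{L(X)}$ is trivial. It is essential here that the map be meromorphic: an arbitrary holomorphic homomorphism from, e.g., $\mathbb{G}_a$ to a torus need not be trivial, but such a map fails to extend meromorphically across the compactification, so it cannot arise.

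Once $L(X)\subseteq\ker\phi_*$, the morphism $\phi_*$ descends through $\alfa:\ai\to T(X)$ to a morphism of tori $\bar\phi_*:T(X)\to A(X)$, and $\ker\phi_*=\alfa\meno(\ker\bar\phi_*)$, so that $\ker\phi_*/L(X)\cong\ker\bar\phi_*$. The finite-index claim is therefore equivalent to showing that $\bar\phi_*$ has finite kernel, i.e. that the induced map between the two complex tori is an isogeny onto its image.

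Proving that $\bar\phi_*$ has finite kernel is the main obstacle. Equivalently one must rule out a positive-dimensional subtorus $S\subseteq(\ker\bar\phi_*)^0$: its preimage $H:=\alfa\meno(S)$ would be a connected meromorphic subgroup, an extension of the torus $S$ by $L(X)$, lying inside $\ker\phi_*$ and hence acting fibrewise on the Albanese map $\alb_X:X\to\Alb X$. The plan is to contradict the maximality of the linear part $L(X)$: using the universal property of the Albanese (every holomorphic map from $X$ to a torus factors through $\alb_X$) together with Fujiki's construction of $T(X)$ as isogenous to the image $\phi_*(\ai)\subseteq\Alb X$, one shows that $H$ is itself meromorphically linear, forcing $S$ to be trivial. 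This is precisely the content encapsulated in \cite[Thm. 5.5]{fujiki-automorphisms}, and I expect the delicate point to be exactly the identification of the quotient torus $T(X)$ with (an isogeny of) the Albanese image, rather than with some a priori larger torus.
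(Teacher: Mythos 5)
The paper does not actually prove this proposition: it is quoted verbatim from Fujiki, with the citation to \cite[Thm.~5.5(2)]{fujiki-automorphisms} standing in for the proof. So your attempt has to be measured against Fujiki's argument rather than anything in the text. Your first half is sound: granting that the Jacobi morphism is a morphism of meromorphic groups (which is part of the same theorem of Fujiki, so legitimate layering), the extension of $\phi_*\restr{L(X)}$ to the projective compactification $L(X)^*$ --- a variety birational to the rational variety $L(X)$ --- becomes, after resolution, a holomorphic map from a smooth projective rational variety to the torus $A(X)$, and such a map is constant because the source carries no holomorphic $1$-forms; hence $L(X)\subseteq\ker\phi_*$. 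Your remark that meromorphicity is essential (a holomorphic homomorphism $\C^*\to A(X)$ can be nontrivial but its graph does not close up analytically over $\PP^1$) is exactly the right point.

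The gap is in the second half. Reducing the finite-index claim to the nonexistence of a positive-dimensional subtorus $S\subseteq(\ker\bar\phi_*)^0$ is fine, as is forming $H=\alfa\meno(S)$. But the step ``one shows that $H$ is itself meromorphically linear'' is precisely the assertion $(\ker\phi_*)^0=L(X)$ that you are trying to prove, and the justification you offer --- ``Fujiki's construction of $T(X)$ as isogenous to the image $\phi_*(\ai)\subseteq\Alb X$'' --- is a restatement of the conclusion, since $\bar\phi_*:T(X)\to A(X)$ having finite kernel is equivalent to $T(X)$ being isogenous to its image in $\Alb X$. The argument is therefore circular at its crux. What is needed there is a genuinely geometric input relating the quotient torus $T(X)=\ai/L(X)$ to the Albanese of $X$: roughly, that a positive-dimensional compact subtorus of $T(X)$ necessarily moves points of $X$ in a way detected by $\alb_X$ through holomorphic $1$-forms (a Blanchard--Sommese-type argument on zeros of the corresponding vector fields), so it cannot sit inside $\ker\bar\phi_*$. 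Your proposal correctly names this as the delicate point but does not supply the argument that closes it, so as written the finite-index half remains unproved.
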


\begin{cor}\label{pinco}
  If $X \in \fuj$ and $G\subset \Aut^0(X)$ is a connected subgroup,
  then $G$ acts trivially on $\Alb X$ if and only if $G\subset L(X)$.
\end{cor}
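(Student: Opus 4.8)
The plan is to reformulate the condition ``$G$ acts trivially on $\Alb X$'' in terms of the kernel of the Jacobi morphism and then to reduce everything to a purely group-theoretic statement about the connected group $G$. By the very definition of $\phi_*$, an element $g\in\ai$ acts trivially on $\Alb X$ precisely when $A_g=\id_{\Alb X}$, i.e.\ when $g\in\ker\phi_*$. Hence $G$ acts trivially on $\Alb X$ if and only if $G\subseteq\ker\phi_*$, and the corollary becomes the assertion that, for a connected subgroup $G\subseteq\ai$, one has $G\subseteq\ker\phi_*$ if and only if $G\subseteq L(X)$. By the preceding Proposition $L(X)$ is a finite index subgroup of $\ker\phi_*$; in particular $L(X)\subseteq\ker\phi_*$.

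One implication is immediate: if $G\subseteq L(X)$, then $G\subseteq\ker\phi_*$, so $G$ acts trivially on $\Alb X$. This direction uses nothing but the inclusion $L(X)\subseteq\ker\phi_*$ and does not need connectedness of $G$.

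For the converse I would assume $G\subseteq\ker\phi_*$ and exploit that $G$ is connected. The idea is that a connected subgroup contained in $\ker\phi_*$ must already sit inside the identity component of $\ker\phi_*$, and that the finite index subgroup $L(X)$ contains that identity component. Concretely, $L(X)=\ker\alfa$ is the kernel of the homomorphism $\alfa:\ai\to T(X)$ of Theorem \ref{fujiki}, hence a closed subgroup of $\ai$ and a fortiori closed in $\ker\phi_*$. A closed subgroup of finite index is open, so $L(X)$ is open in $\ker\phi_*$; therefore $L(X)\cap G$ is an open subgroup of $G$ containing the identity, and connectedness of $G$ forces $L(X)\cap G=G$, that is $G\subseteq L(X)$.

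The only delicate point, and the step I would treat most carefully, is this last topological argument: finite index alone does not guarantee openness of a subgroup, so it is essential to record that $L(X)$ is closed (being a kernel) before passing from ``finite index'' to ``open'', after which connectedness of $G$ closes the argument at once.
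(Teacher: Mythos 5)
Your argument is correct, and it matches the route the paper intends: the paper states this corollary without proof as an immediate consequence of the preceding proposition, and the details you supply (triviality on $\Alb X$ $\Leftrightarrow$ containment in $\ker\phi_*$, then closedness $+$ finite index $\Rightarrow$ openness of $L(X)$ in $\ker\phi_*$, then connectedness of $G$) are exactly the standard way to fill it in. You are also right to flag that finite index alone would not suffice and that closedness of $L(X)=\ker\alfa$ is the point that makes the openness step legitimate.
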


\begin{cor}\label{Kahmero}
  If $X$ is K\"ahler and $K$ is a compact connected Lie group that
  acts holomorphically on $X$ in Hamiltonian way, then $G:=K^\C$ is a
  meromorphic subgroup of $\ai$.
\end{cor}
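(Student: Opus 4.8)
The plan is to reduce the statement to Corollary \ref{pinco} together with the analytic-closure result for reductive subgroups recalled in the Introduction, the only genuinely new input being that a Hamiltonian $K$ acts trivially on $\Alb X$. Since $X$ is K\"ahler it lies in $\fuj$ (take the identity of $X$ as bimeromorphic model), and $G=K^\C$ is a connected complex reductive subgroup of $\ai$: it is the connected integral subgroup of the complex Lie group $\ai$ with Lie algebra $\liek^\C=\liek+i\liek\subset\Lie(\ai)$, and it is reductive because $K$ is compact.

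First I would record the infinitesimal description of the Jacobi morphism $\phi_*\colon\ai\to A(X)$ (see \cite{akhiezer-libro}): its differential sends a holomorphic vector field $\xi^{1,0}\in H^0(X,T^{1,0}X)=\Lie(\ai)$ to the functional in $\Lie(A(X))=H^0(X,\Omega^1)^*$ given by $\alpha\mapsto\iota_{\xi^{1,0}}\alpha$, the contraction being a holomorphic function on the compact manifold $X$, hence a constant. Thus $G$ will act trivially on $\Alb X$ as soon as $\iota_{\xi^{1,0}}\alpha=0$ for every $\xi\in\liek$ and every holomorphic $1$-form $\alpha$: indeed then $d\phi_*$ vanishes on $\liek$, hence on $\liek^\C=\lieg$ by $\C$-linearity, and $G$ is connected.

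The heart of the argument, and the step I expect to be the main obstacle, is exactly this vanishing, where the Hamiltonian hypothesis is indispensable (translations on a complex torus show that holomorphy of the action alone is not enough). Write $\xi_X$ for the fundamental vector field of $\xi\in\liek$ and $\mu^\xi:=\sx\mu,\xi\xs$ for the corresponding component of the moment map, so that $\iota_{\xi_X}\om=d\mu^\xi$. Since $\alpha$ has type $(1,0)$ one has $\iota_{\xi_X}\alpha=\iota_{\xi^{1,0}}\alpha=:c$, a constant. Contracting the vanishing form $\alpha\wedge\om^n$ (degree $2n+1$) with $\xi_X$ gives the pointwise identity
\begin{gather*}
  c\,\om^n=(\iota_{\xi_X}\alpha)\,\om^n
  = n\,\alpha\wedge(\iota_{\xi_X}\om)\wedge\om^{n-1}
  = n\,\alpha\wedge d\mu^\xi\wedge\om^{n-1}.
\end{gather*}
On a compact K\"ahler manifold every holomorphic $1$-form is closed, so $d\alpha=0$, and $d\om^{n-1}=0$; hence $d(\mu^\xi\,\alpha\wedge\om^{n-1})=d\mu^\xi\wedge\alpha\wedge\om^{n-1}=-\alpha\wedge d\mu^\xi\wedge\om^{n-1}$, and Stokes' theorem gives $\int_X\alpha\wedge d\mu^\xi\wedge\om^{n-1}=0$. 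Integrating the displayed identity yields $c\int_X\om^n=0$, whence $c=0$, as required.

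Finally I would conclude formally. Having shown that $G$ acts trivially on $\Alb X$, Corollary \ref{pinco} gives $G\subset L(X)$. Since $L(X)$ is meromorphically linear by Theorem \ref{fujiki} and $G=K^\C$ is connected and reductive, its closure is analytic by the analytic-closure result for reductive subgroups acting trivially on $\Alb X$ recalled in the Introduction; as this property does not depend on the chosen meromorphic structure (via Lemma \ref{chiusura}, the structures $\aus$ and $\Fu(X)_\red$ being equivalent), $\overline{G}$ is analytic in $\Fu(X)_\red$, i.e. $G$ is a meromorphic subgroup of $\ai$ with the natural structure.
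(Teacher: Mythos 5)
Your proof is correct, and it follows the same overall skeleton as the paper's: show that $G=K^\C$ acts trivially on $\Alb X$, then conclude via Corollary \ref{pinco} and Fujiki's criterion (Theorem \ref{pallino}) that $G$ is a meromorphic subgroup in the natural structure. The difference lies entirely in how the key input is obtained. The paper simply cites \cite[Prop.~1]{alan-wurzy} (together with \cite[p.~93]{kobayashi-trans} for the fact that $K\subset\ai$ extends to $K^\C\subset\ai$) for the triviality of the action on the Albanese torus, whereas you prove it from scratch: you identify the differential of the Jacobi morphism with $\xi\mapsto(\alpha\mapsto\iota_{\xi^{1,0}}\alpha)$ and kill this constant by contracting the identically vanishing $(2n+1)$-form $\alpha\wedge\om^n$ with the fundamental vector field and integrating by parts against $\mu^\xi$, using that holomorphic $1$-forms on a compact K\"ahler manifold are closed. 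The computation is correct (the contraction identity, the sign in $d(\mu^\xi\,\alpha\wedge\om^{n-1})$, and the positivity of $\int_X\om^n$ all check out), and it makes transparent exactly where the Hamiltonian hypothesis enters, which the citation hides. What you lose is brevity and, very mildly, you still take for granted (as does the paper) that the integral subgroup of $\ai$ with Lie algebra $\liek^\C$ is a closed complex reductive subgroup isomorphic to the complexification of $K$; this is standard for compact subgroups of complex Lie groups and is cited, not proved, in the paper as well. Your closing reduction to the analytic-closure statement is exactly the content of Theorem \ref{pallino}, so no circularity arises.
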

\begin{proof}
  The assumption means that there are a K\"ahler form $\om$ and a
  momentum mapping $\mu: X\lra \liek^*$ such that $\om$ is
  $K$-invariant, $\mu$ is equivariant and
  $d\sx \mu, v\xs = i_{\xi_v}\om$, where $\scalo$ denotes the pairing
  of $\liek^*$ and $\liek$ and $\xi_v$ is the fundamental vector field
  corresponding to $v\in \liek$.  It is well-known that $K$ acts by
  biholomorphisms \cite[p. 93]{kobayashi-trans}, that the inclusion
  $K \subset \ai$ extends to an inclusion $G:=K^\C \subset \ai$ and
  that $G$ acts trivially on $\Alb X$, \cite[Prop. 1]{alan-wurzy}.
\end{proof}

\begin{teo}[Fujiki]
  \label{pallino}
  Let $X \in \fuj$ and let $G \subset \ai$ be a connected reductive
  subgroup.  Then $G$ is meromorphic (with the natural structure) if
  and only if it acts trivially on $\Alb X$.
\end{teo}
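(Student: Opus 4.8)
By Corollary \ref{pinco}, $G$ acts trivially on $\Alb X$ if and only if $G \subset L(X)$, so the whole statement reduces to proving that $G$ is a meromorphic subgroup of $\ai$ (with the natural structure $\Fu(X)_\red$) if and only if $G \subset L(X)$. I would establish the two implications separately, exploiting the exact sequence of Theorem \ref{fujiki}: $L(X) = \ker \alfa$ is meromorphically linear and $T(X) = \ai/L(X)$ is a torus.

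For sufficiency, assume $G \subset L(X)$. By Theorem \ref{fujiki} the natural meromorphic structure on $L(X)$ is (equivalent to) the meromorphically linear one, so I may fix a faithful representation $L(X) \hookrightarrow \Sl(V)$ and realize $\overline{L(X)}$ inside $\PP(\End V)$. The restriction $G \hookrightarrow \Sl(V)$ is a finite-dimensional holomorphic representation of the reductive group $G$, hence algebraic; therefore $G$ is a Zariski-closed algebraic subgroup of $\Sl(V)$ and its closure in $\PP(\End V)$ is a projective, in particular analytic, variety. Thus $G$ is a meromorphic subgroup of $L(X)$ for the meromorphically linear structure, hence also for the equivalent natural structure (meromorphy of a subgroup is independent of the chosen equivalent structure, via Lemma \ref{chiusura}). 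Since $\overline{L(X)}$ is closed in $\Fu(X)_\red$, the closure of $G$ in $\Fu(X)_\red$ coincides with its closure in $\overline{L(X)}$ and is therefore analytic; so $G$ is meromorphic with the natural structure.

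For necessity, assume $G$ is meromorphic, so that $\overline{G} \subset \Fu(X)_\red$ is a compact irreducible analytic set (irreducible because $G$ is connected) lying in class $\fuj$. Being a morphism of meromorphic groups, $\alfa : \ai \to T(X)$ extends to a meromorphic map $\Fu(X)_\red \mero T(X)$, and since $\overline{G}$ is irreducible and meets the domain of definition (it contains $G$, where $\alfa$ is holomorphic), the restriction $\overline{G} \mero T(X)$ is a meromorphic map extending $\alfa|_G$. I would now show this map is constant. The group $G$, being connected reductive, is a rational variety, so a smooth compact model $\tilde{G}$ bimeromorphic to $\overline{G}$ — on which, after resolving indeterminacy, the composite map to $T(X)$ becomes holomorphic — is bimeromorphic to projective space and hence satisfies $h^{1,0}(\tilde{G}) = H^0(\tilde{G}, \Omega^1) = 0$. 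This holomorphic map to $T(X) = \C^g/\Lambda$ pulls back the global holomorphic $1$-forms $dz_1, \dots, dz_g$ to elements of $H^0(\tilde{G}, \Omega^1) = 0$, so its differential vanishes identically and it is constant. Hence $\alfa|_G$ is the trivial homomorphism, i.e. $G \subset \ker \alfa = L(X)$.

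The main obstacle is the necessity direction, and specifically the step forcing $\overline{G} \mero T(X)$ to be constant: a nonconstant holomorphic homomorphism from a central $\C^*$-factor of $G$ into a compact torus does exist as a transcendental (non-algebraic) map, and the point is precisely that it cannot survive inside a meromorphic compactification. I capture this by the Hodge-theoretic vanishing $h^{1,0}(\tilde{G}) = 0$ coming from the rationality of reductive $G$, which rules out nonconstant maps to a torus; the underlying analytic phenomenon is parallel to the failure of meromorphy illustrated by the Hopf surface in Remark \ref{ciuccia}, where a spiralling graph accumulates on an entire fibre. One should also check that the $1$-form pullback and birational-invariance arguments remain valid for the possibly non-K\"ahler but class-$\fuj$ model $\tilde{G}$, which is fine since holomorphic $1$-forms are bimeromorphic invariants of compact complex manifolds.
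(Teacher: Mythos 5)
Your sufficiency direction (trivial action on $\Alb X$ implies meromorphic) is essentially the paper's own argument: reduce to $G\subset L(X)$ via Corollary \ref{pinco}, use that the natural structure on $L(X)$ is equivalent to the meromorphically linear one, observe that a connected complex reductive subgroup of $\Sl(V)$ is algebraic so that its closure in $\PP(\End V)$ is a projective variety, and transfer back through the equivalence of structures. The paper invokes \cite[Prop.~6.10]{fujiki-automorphisms} for the final transitivity step (``a meromorphic subgroup of a meromorphic subgroup is meromorphic''), which you replace by the remark that $\overline{L(X)}$ is closed in $\Fu(X)_\red$; that is fine.

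The converse is where the proposal breaks down. The paper does not prove it: it cites \cite[Lemma~3.8]{fujiki-automorphisms}. You attempt an independent proof, and the critical unjustified step is the assertion that a smooth model $\tilde G$ of $\overline G$ is bimeromorphic to projective space ``because $G$ is a rational variety''. All you know about $\overline G$ is that it is a compact analytic subspace of $\Fu(X)_\red$ containing the complex manifold underlying $G$ as a dense Zariski-open subset. Containing a rational quasi-projective variety as a dense Zariski-open set does not make a compact complex space bimeromorphic to a projective (or Moishezon) one: a bimeromorphism from $\overline G$ to an algebraic compactification $G^{*}$ of $G$ requires the closure in $\overline G\times G^{*}$ of the graph of $\id_G$ to be analytic, and that is exactly the kind of statement that can fail --- the paper's Remark \ref{ciuccia} exhibits a graph over a dense open set whose closure acquires an extra component $E_1\times E_2$ and is not analytic. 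You cite that example as an analogy, but it is in fact the obstruction to your own step. Moreover, identifying the Douady-space closure of $G$ with an algebraic compactification up to bimeromorphism is essentially the content of Fujiki's linearity results, which become available only once one knows $G\subset L(X)$ --- the very conclusion you are after. Without $h^{1,0}(\tilde G)=0$ the argument collapses: a nontrivial homomorphism from a central $\C^{*}$ of $G$ to $T(X)$ pulls the invariant $1$-forms of the torus back to $dz/z$, and whether such a form extends holomorphically to $\tilde G$ depends precisely on the unproven global structure of the compactification. (The rank-one case is genuinely fine, since a compact curve containing $\C^{*}$ densely has $\PP^1$ as normalization; but reducing to it would require the central $\C^{*}$-factors to be meromorphic subgroups themselves, which is again circular.) The safe course is the paper's: quote Fujiki's Lemma~3.8 for this implication, or else supply an actual proof that $\overline G$ is Moishezon.
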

\begin{proof}
  One implication is proved in \cite[Lemma 3.8]{fujiki-automorphisms}.
  For the other assume that $G$ acts trivially on $\Alb X$.  By
  Corollary \ref{pinco} $G\subset L(X)$.  By Theorem \ref{fujiki}
  $L(X)$ is a meromorphic subgroup of $\Fu(X)_\red$ and the
  meromorphic structure induced from $\Fu(X)_\red$ (i.e. the natural
  structure) is equivalent to the linear one.  Since $G$ is reductive,
  it is an algebraic subgroup of $L(X)$.  Hence it is a meromorphic
  subgroup of $L(X)$ with the natural structure and thus it is itself
  a meromorphic subgroup of $\Autz(X)$ with the natural structure.
  See \cite[Prop. 6.10]{fujiki-automorphisms}.
\end{proof}

\begin{prop}
  \label{fubamero}
  If $X$ is a compact complex manifold, then $\Fu(X)_\red$ is
  $\ai$-equivariantly bimeromorphic to $\aus$.
\end{prop}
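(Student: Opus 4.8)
The plan is to realize the desired map as the \emph{Douady--Barlet cycle map}, which sends a compact analytic subspace to its fundamental cycle. First I would invoke Barlet's construction (see \cite{barlet-magnusson-vol-1}): assigning to a subspace $Z\subset X\times X$ the $n$-cycle formed by its $n$-dimensional components counted with their multiplicities defines a holomorphic map from the reduced Douady space to the cycle space. Since $C_n(X\times X)$ is reduced, this map factors through the reduction, and restricting it to the component through the diagonal gives a holomorphic map $\gamma\colon \Fu(X)_\red \to C_n(X\times X)$. (The generic member of $\Fu(X)$ is a graph, of pure dimension $n$, the diagonal $\Delta$ itself being $n$-dimensional, so the fundamental $n$-cycle is the relevant object; an equivalent route is to apply Theorem \ref{barlet1} to the universal subspace over a normalization of $\Fu(X)_\red$, where the main point is again the holomorphy of the assignment.)

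Next I would identify $\gamma$ on the open stratum. For $f\in\ai$ the graph $\Ga_f$ is a smooth, hence reduced, subvariety of pure dimension $n$, so its fundamental cycle is $\Ga_f$ with multiplicity one; that is, $\gamma(f)=\Ga_f=j(f)$. Thus $\gamma$ maps the copy of $\ai$ sitting inside $\Fu(X)_\red$ biholomorphically onto $\au=j(\ai)$. Because $\Fu(X)$ is irreducible with $\ai$ dense and $\gamma$ is continuous, one gets $\gamma(\Fu(X)_\red)\subset\overline{\au}=\aus$, so in fact $\gamma\colon\Fu(X)_\red\to\aus$.

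To conclude that $\gamma$ is bimeromorphic I would use that $\au$ is a dense Zariski-open subset of $\aus$ by Proposition \ref{lieber}, and that likewise $\ai$ is a dense Zariski-open subset of $\Fu(X)_\red$ (the locus of reduced graphs of automorphisms, exactly as in the proof of Proposition \ref{lieber}). Since $\gamma$ restricts to the identity biholomorphism $\ai \to \au$ between these dense Zariski-open subsets, it is by definition a bimeromorphic map; when $X\in\fuj$, so that both spaces are compact and irreducible, one may instead deduce this from Lemma \ref{bimeromorfi}, the fibres of $\gamma$ over points of $\au$ being singletons.

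Finally, for equivariance I would check that both actions descend from the geometric action $g\cdot(x,y)=(x,g\cdot y)$ on $X\times X$: on $\Fu(X)$ an automorphism $g$ acts by $Z\mapsto(\id_X\times g)(Z)$, and on $\aus$ it acts by \eqref{azione-suC}. Since forming the fundamental cycle commutes with pushforward along the biholomorphism $\id_X\times g$, we have $\gamma\bigl((\id_X\times g)(Z)\bigr)=(\id_X\times g)_*\gamma(Z)$, so $\gamma$ is $\ai$-equivariant. I expect the only genuine obstacle to be the first step, namely establishing that the fundamental-cycle assignment is honestly holomorphic on $\Fu(X)_\red$ (the Douady--Barlet morphism, including the behaviour where the fibre dimension of the universal subspace jumps); all remaining steps are formal once that tool is in hand.
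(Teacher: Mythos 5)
Your proposal is correct and follows essentially the same route as the paper: the paper also invokes the Douady--Barlet morphism (citing \cite[Thm. 8 p. 121]{barlet-482} for its existence and holomorphy, which is exactly the one nontrivial input you flag), observes that it extends $\id_{\ai}$ under the two embeddings of $\ai$ and is obviously equivariant, and concludes by Lemma \ref{bimeromorfi}. Your write-up merely supplies more detail on the identification over the open stratum and on equivariance.
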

\begin{proof}
  The morphism from Douady space to cycle space restricts to a
  surjective holomorphic map $f : \Fu(X)_\red \lra \aus$, see
  \cite[Thm. 8 p. 121]{barlet-482}.  This map is obviously
  $\ai$-equivariant. The complex space $\ai$ embeds in both
  $\Fu(X)_\red$ and $\aus$. If we consider these embeddings as
  identifications, the map $f$ extends $\id_{\ai}$. In particular $f$
  is 1-1 over $\ai$. By Lemma \ref{bimeromorfi} $f$ is bimeromorphic.
\end{proof}

\begin{prop}
  \label{fuba}
  If $G$ is a meromorphic subgroup with the natural structure, then the
  closure of $G$ in $\aus$ is an analytic subset.
\end{prop}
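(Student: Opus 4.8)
The plan is to transport the analyticity hypothesis from the Douady side to the cycle side through the comparison map of Proposition \ref{fubamero}. That proposition supplies a surjective holomorphic map $f : \Fu(X)_\red \lra \aus$ which extends $\id_{\ai}$ under the canonical embeddings of $\ai$ into both spaces. Write $H$ for the closure of $G$ in $\Fu(X)_\red$. By hypothesis $G$ is a meromorphic subgroup with the natural structure, which means precisely that $H$ is an analytic subset of $\Fu(X)_\red$. By Theorem \ref{fujiki} the space $\Fu(X)_\red$ is a meromorphic structure on $\ai$, in particular a compact analytic space, so the closed subset $H$ is itself compact.

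The main step is to push $H$ forward by $f$. Since $f$ extends $\id_{\ai}$ and $G \subset \ai$, we have $f(G) = G$. The restriction $f\restr{H}$ is holomorphic with compact source, hence proper, so Remmert Proper Mapping Theorem shows that $f(H)$ is an analytic subset of $\aus$. It then remains only to check that $f(H)$ is precisely the closure of $G$ in $\aus$.

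This last identification I would prove by two inclusions, all closures now understood in $\aus$. Continuity of $f$ gives $f(H) \subset \overline{f(G)} = \overline{G}$. Conversely $f(H)$, being the continuous image of the compact set $H$, is compact and hence closed in the Hausdorff space $\aus$; as it contains $f(G) = G$, it contains $\overline{G}$ as well. Combining the two inclusions yields $f(H) = \overline{G}$, which is therefore analytic, as desired.

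I do not anticipate a serious obstacle here: the statement is essentially a formal consequence of Proposition \ref{fubamero} together with Remmert's theorem. The only point requiring care is the final identification $f(H) = \overline{G}$, where one must use both the compactness of $H$ (to guarantee that $f(H)$ is closed rather than merely dense) and the fact that $f$ fixes $G$ pointwise. It is worth noting that neither the surjectivity nor the bimeromorphy of $f$ actually enters the argument; only its continuity together with $f\restr{\ai} = \id_{\ai}$ is needed.
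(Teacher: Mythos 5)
Your argument is correct and is essentially the paper's own proof: both push the analytic closure of $G$ in $\Fu(X)_\red$ forward through the comparison map of Proposition \ref{fubamero} and invoke Remmert's Proper Mapping Theorem. You merely spell out the final identification $f(H)=\overline{G}$, which the paper states without detail.
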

\begin{proof}
  Consider again the morphism from Douady space to cycle space
  $f : \Fu(X)_\red \lra \aus$ as in Proposition \ref{fubamero}.
  Denote by $W$ the closure of $G$ in $\Fu(X)_\red$. By assumption $W$
  is an analytic subset.  By Remmert Proper Mapping Theorem $f(W)$ is
  an analytic subset of $\aus $. But it coincides with the closure of
  $G$ in $\aus$.
\end{proof}

\begin{cor} If $X$ is K\"ahler and $K$ is a compact connected Lie
  group that acts holomorphically on $X$ in Hamiltonian way, then
  $G:=K^\C$ has analytic closure in $\aus$.
\end{cor}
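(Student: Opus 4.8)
The plan is to obtain this corollary as an immediate combination of Corollary \ref{Kahmero} and Proposition \ref{fuba}, so that the only real work is to check that the hypotheses of the two statements line up. First I would record that a K\"ahler manifold belongs to the Fujiki class $\fuj$: one simply takes $Y=X$ and $h=\id$ in Definition \ref{defclassc}. This guarantees that the whole apparatus of \S\ref{ridut} applies to $X$, in particular Fujiki's Theorem \ref{fujiki}, which endows $\ai$ with its natural meromorphic structure $\Fu(X)_\red$, and Proposition \ref{fuba}.

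Next, since $K$ is compact, connected, and acts holomorphically on $X$ in a Hamiltonian way, Corollary \ref{Kahmero} applies directly and yields that $G:=K^\C$ is a meromorphic subgroup of $\ai$. The point to handle carefully here is purely terminological: the meromorphic structure on $\ai$ that is implicitly understood is the natural one $\Fu(X)_\red$ of Theorem \ref{fujiki}, so that ``$G$ is a meromorphic subgroup'' means precisely that the closure $\overline{G}$ is an analytic subset of $\Fu(X)_\red$. As an alternative route that avoids leaning on the precise formulation of Corollary \ref{Kahmero}, one could argue directly: $G=K^\C$ is connected and reductive, and by the facts quoted in the proof of Corollary \ref{Kahmero} it acts trivially on $\Alb X$; Theorem \ref{pallino} then gives that $G$ is meromorphic with the natural structure.

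Finally I would invoke Proposition \ref{fuba} verbatim, applied to $G=K^\C$: a subgroup that is meromorphic with the natural structure has analytic closure in $\aus$. This delivers exactly the assertion that $\overline{G}$ is an analytic subset of $\aus$, and the proof is complete.

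I do not expect a genuine obstacle in this argument; it is essentially a repackaging of the preceding results. The single item deserving attention is the bookkeeping of meromorphic structures, namely making sure that the notion of meromorphic subgroup produced by Corollary \ref{Kahmero} is the very one (relative to $\Fu(X)_\red$) consumed by Proposition \ref{fuba}. The transition between the two compactifications $\Fu(X)_\red$ and $\aus$ is already taken care of by the $\ai$-equivariant bimeromorphism of Proposition \ref{fubamero}, which is what underlies Proposition \ref{fuba}, so no further analysis of cycle space versus Douady space is required.
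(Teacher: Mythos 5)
Your proposal is correct and is essentially the paper's own argument: the paper proves this corollary in one line by citing Corollary \ref{Kahmero} (so $G$ is a meromorphic subgroup with the natural structure) and then letting Proposition \ref{fuba}, which immediately precedes the statement, convert that into analytic closure in $\aus$. Your extra care about which meromorphic structure is meant, and the alternative route via Theorem \ref{pallino}, are both consistent with what the paper does implicitly.
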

\begin{proof}
  By Corollary \ref{Kahmero} $G$ is meromorphic.
\end{proof}

The next result is a refinement of Theorem \ref{nondominant}.

\begin{teo}
  \label{nondominantG}
  Assume that $X \in \fuj$ and that $G\subset \ai$ is a meromorphic
  subgroup (in the natural structure).  Let $\bar{G} $ denote the
  closure of $G$ in $\aus$ and set $ \partial G:=\bar{G} -G$.  Then
  $ \partial G \subset \partial \aus$. Morever for $b \in \partial G$,
  the stabilizer $G _b$ for the action \eqref{azione-suC} has positive
  dimension and $f_b(X) \subset X^{G_b}$.
\end{teo}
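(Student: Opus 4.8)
The plan is to transpose the proof of Theorem~\ref{nondominant}, replacing $\ai$ by $G$ and $\aus$ by $\bar G$. By Proposition~\ref{fuba} the closure $\bar G$ is an analytic subset of $\aus$, and since $G$ is connected (hence irreducible as a complex space) $\bar G$ is irreducible. Recall that the embedding $j$ is equivariant, $j(gh)=g\cdot j(h)$, so that $\au$ is a single $\ai$-orbit; restricting the action \eqref{azione-suC} to $G$, the subset $G\subset\au$ is precisely the $G$-orbit of $\Delta=j(\id)$. Hence $g\cdot G=G$ for all $g\in G$, and as each $g$ acts as a homeomorphism of $\aus$ we get $g\cdot\bar G=\overline{g\cdot G}=\bar G$; consequently $\partial G=\bar G-G$ is $G$-invariant.

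First I would prove $\partial G\subset\partial\aus$, which amounts to $\bar G\cap\au=G$, i.e.\ to $G$ being closed in $\au\cong\ai$; this is the crux of the argument. When $G$ is reductive it is an algebraic, hence Zariski closed, subgroup of the meromorphically linear group $L(X)$, so it is closed in $\ai$ and $\bar G\cap\au=G$ is immediate. In general one uses that a meromorphic subgroup is dense and \emph{open} in its closure $\bar G$: then $G$ is open in $\bar G\cap\au$, the latter is the closure of $G$ in $\ai$ and hence a connected closed subgroup, and an open subgroup of a connected group is the whole group, giving $\bar G\cap\au=G$. Either way $G$ is open in the irreducible $\bar G$, so $\dim_\C\bar G=\dim_\C G$; and since $\partial\aus=\B$ is analytic (Proposition~\ref{lieber}) we obtain $\partial G=\bar G-(\bar G\cap\au)=\bar G\cap\partial\aus$, which is contained in $\partial\aus$ and is itself analytic.

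The positivity of $\dim G_b$ then follows as in Theorem~\ref{nondominant}. The set $\partial G=\bar G\cap\partial\aus$ is a proper analytic subset of the irreducible $\bar G$, so each of its components has dimension strictly smaller than $\dim_\C\bar G=\dim_\C G$. For $b\in\partial G$ the orbit $G\cdot b$ lies in the invariant set $\partial G$, whence $\dim(G\cdot b)\le\dim\partial G<\dim G$; as $\dim(G\cdot b)=\dim G-\dim G_b$, we conclude $\dim G_b>0$.

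It remains to show $f_b(X)\subset X^{G_b}$, and here I would argue exactly as in the last part of Theorem~\ref{nondominant}, using $G_b=G\cap\ai_b$. Since $g\in G_b$ fixes $b$ and the action \eqref{azione-suC} preserves the first projection, $g$ carries the meromorphic component $Z_b$ to a component lying over all of $X$, which by the uniqueness in Lemma~\ref{cavallo} must again be $Z_b$; thus $g\cdot Z_b=Z_b$. For $x\notin\inde(f_b)$ one has $(\{x\}\times X)\cap Z_b=\{(x,f_b(x))\}$, and $g\cdot(x,f_b(x))=(x,g\cdot f_b(x))\in Z_b$ forces $g\cdot f_b(x)=f_b(x)$. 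Hence $f_b$ sends $X-\inde(f_b)$ into $X^{G_b}$, and since $Z_b$ is the closure of this graph, $f_b(X)=\pi_2(Z_b)\subset X^{G_b}$. The main obstacle throughout is the closedness of $G$ in $\ai$ established in the second paragraph: it is exactly what confines the whole boundary $\partial G$ to $\partial\aus$, and everything else is a faithful copy of the argument for Theorem~\ref{nondominant}.
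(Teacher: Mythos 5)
Your proof is correct and takes essentially the same route as the paper: analyticity of $\bar{G}$ (Proposition \ref{fuba}) forces $G$ to be closed in $\au$, hence $\partial G\subset\B$; the dimension count on the $G$-invariant proper analytic subset $\partial G$ of the irreducible $\bar{G}$ gives $\dim G_b>0$; and the fixed-point argument is that of Theorem \ref{nondominant}. The only cosmetic difference is that the paper finishes by citing Theorem \ref{nondominant} together with the inclusion $G_b\subset\ai_b$ (so $f_b(X)\subset X^{\ai_b}\subset X^{G_b}$), whereas you re-run that argument verbatim with $G_b$ in place of $\ai_b$.
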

\begin{proof}
  Let $j$ be the map defined in \eqref{def-j} and consider the action
  of $\ai$ on $C_n(X\times X)$ defined in \eqref{azione-suC}.  As
  usual we identify elements of $\ai $ with their image through $j$.
  So we consider $G\subset \ai=\au \subset \aus$.  By Proposition
  \ref{fuba} $\bar{G}$ is an analytic subset of $\aus$.  In particular
  $G$ is closed in $\au$, so $ \partial G \subset\partial \aus$.  To
  prove the second assertion, observe that $G $ is an open orbit of
  itself in $\bar{G}$.  By Proposition \ref{lieber} $\bar{G} $ is
  irreducible and $\partial G $ is a proper analytic subset of
  $\bar{G}$.  Hence any irreducible component of $\partial G$ has
  dimension strictly less than $\dim G $.  Since $\partial G$ is
  invariant by the action, it follows that for $b\in\partial G$,
  $\dim G \cd b < \dim G$, so $\dim G _b >0$.  Observing that
  $G_b \subset \ai_b$ and applying Theorem \ref{nondominant} concludes
  the proof.
\end{proof}

If $X$ is K\"ahler, we can say something on the geometry of $\bar{G}$.
(Compare Theorem \ref{main2}.)
\begin{teo}
  \label{proj}
  If $X$ is a K\"ahler manifold and $G \subset \ai$ is a connected
  reductive subgroup, that acts trivially on $\Alb X$, then the
  closure of ${G}$ inside $\aus$ is a projective variety.
\end{teo}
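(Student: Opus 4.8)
The plan is to prove that the closure $\overline{G}$ of $G$ in $\aus$ is a compact complex space which is simultaneously \emph{Moishezon} and \emph{K\"ahler}, and then to invoke the projectivity criterion that a compact K\"ahler space which is Moishezon is projective. The three ingredients will be produced from the results already assembled in this section together with the K\"ahler hypothesis on $X$. To begin, $\overline{G}$ is a compact analytic space: since $G$ is connected reductive and acts trivially on $\Alb X$, Theorem \ref{pallino} shows that $G$ is a meromorphic subgroup in the natural structure, and Proposition \ref{fuba} then gives that $\overline{G}$ is an analytic subset of $\aus$; being closed in the compact space $\aus$, it is compact.

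Next I would establish the Moishezon property. By Corollary \ref{pinco} we have $G \subset L(X)$, and by Theorem \ref{fujiki} the subgroup $L(X)$ is meromorphically linear, with natural structure equivalent to the linear structure obtained by taking a faithful representation $L(X)\ra \Sl(V)$ and closing up $L(X)$ inside $\PP(\End V)$. As $G$ is reductive it is an algebraic subgroup of $L(X)$, so its closure $G'$ inside $\PP(\End V)$ is a projective variety. Proposition \ref{fubamero} provides an $\ai$-equivariant bimeromorphism $\Fu(X)_\red \mero \aus$, and the natural and linear structures on $L(X)$ are equivalent; composing these bimeromorphisms and using Lemma \ref{chiusura} to control the behaviour of the closures (exactly as in the discussion showing that the notion of meromorphic subgroup is independent of the equivalent structure chosen), one identifies $\overline{G}$ bimeromorphically with $G'$. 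Since the Moishezon property is a bimeromorphic invariant of compact complex spaces, $\overline{G}$ is Moishezon.

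Finally comes the K\"ahler property, where the hypothesis that $X$ is K\"ahler enters decisively. As $X$ is K\"ahler, so is $X\times X$, and by the theory of Fujiki and Varouchas the irreducible components of the cycle space $C_n(X\times X)$ of a compact K\"ahler manifold are compact K\"ahler spaces; in particular $\aus$ is K\"ahler. An analytic subspace of a K\"ahler space is again K\"ahler, so $\overline{G}$ is K\"ahler. Being compact, Moishezon and K\"ahler, $\overline{G}$ is therefore projective.

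The main obstacle I anticipate is precisely this K\"ahler input: one must correctly invoke the nontrivial statement that the cycle space of a compact K\"ahler manifold is K\"ahler, and then apply the appropriate version of Moishezon's projectivity criterion for possibly singular reduced spaces, since neither $\aus$ nor $\overline{G}$ need be smooth. By contrast, the Moishezon step is a formal consequence of meromorphic linearity and the bimeromorphic invariance already built into the earlier propositions.
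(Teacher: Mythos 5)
Your overall skeleton (analytic closure, K\"ahler via Varouchas, Moishezon, then ``Moishezon plus K\"ahler implies projective'') coincides with the paper's, and the first, second and last ingredients are handled exactly as there. Where you genuinely diverge is the Moishezon step. The paper does \emph{not} argue through meromorphic linearity at that point: it takes a $G$-equivariant resolution $\pi: Z \ra \overline{G}$, shows $Z$ is a compact K\"ahler $G$-almost homogeneous manifold, proves that $G$ acts trivially on $\Alb Z$ (the semisimple part for free, the $\C^*$-factors by producing fixed points from the analytic closures of their orbits and quoting Sommese), and then invokes Huckleberry--Wurzbacher and Oeljeklaus to get $b_1(Z)=0$ and $Z$ projective; Moishezonness of $\overline{G}$ follows by bimeromorphic invariance. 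Your route --- identify $\overline{G}$ bimeromorphically with the projective closure $G'$ of $G$ in $\PP(\End V)$ --- is more direct and avoids the resolution and the whole Albanese/fixed-point analysis, at the price of leaning harder on the precise content of ``the natural structure on $L(X)$ is equivalent to the linear one.''

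The one place where your argument is genuinely incomplete is exactly that identification. Lemma \ref{chiusura}, which you cite, only yields that the image of $\overline{G}-A$ under the relevant bimeromorphic map has \emph{analytic} closure; it does not by itself give that the induced map $\overline{G}\mero G'$ is bimeromorphic, and a bimeromorphic map of ambient spaces does not in general restrict to a bimeromorphic map between a subvariety and (the closure of) its image. Here the gap can be closed: the equivalence between the two structures on $L(X)$ and its inverse both extend $\id_{L(X)}$, so their indeterminacy loci miss $G$ and the graphs have single-point fibres over every point of $G$; likewise the morphism $\Fu(X)_\red\ra\aus$ of Proposition \ref{fubamero} is injective over $\ai$. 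Feeding this into Lemma \ref{bimeromorfi}, applied to the component of the graph dominating $\overline{G}$, shows that the induced maps between the closures are generically one-to-one, hence bimeromorphic, and your conclusion that $\overline{G}$ is Moishezon then follows. So the proposal is viable and genuinely shorter than the paper's argument, but as written the decisive step is asserted rather than proved, and the lemma you invoke is not the one that does the work.
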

\begin{proof}
  By Corollary \ref{pinco} $G \subset L(X)$. By Theorem \ref{pallino}
  $\bar{G}$ is an analytic subset of $\aus$. By a result of Varouchas
  \cite{varouchas-image} $\aus$ is a K\"ahler space, so the same is
  true of $\bar{G}$. Let $\pi: Z\ra \bar{G}$ be a $G$-resolution of
  $\bar{G}$ (see e.g. \cite[p. 150]{kollar-resolution}).  Then $\pi$
  is a projective, hence a K\"ahler morphism \cite[Prop. 4.6
  (4)]{bing}.  Since $Z$ is compact, it follows from \cite[Prop. 4.6
  (2)]{bing} that it is K\"ahler. Thus $Z$ is a K\"ahler $G$-almost
  homogeneous manifold. We claim that $G$ acts trivially on $\Alb(Z)$.
  Indeed $G$ acts on $\Alb Z$ and being connected it acts by
  translations.  Now up to a finite cover $G = T \rtimes S$ with
  $T=(\C^*)^r$ and $S$ semisimple and connected.  Any morphism
  $S \ra \Alb Z$ is trivial, so $S$ acts trivially.  Each
  $\C^*$-factor of $T$ is algebraic in $G$ and hence is a meromorphic
  subgroup of $G$. As such $\C^*$ acts meromorphically on $X$. By
  \cite[Prop. 2.2]{fujiki-automorphisms} it acts meromorphically also
  on $\Fu(X)$ and on $\Fu(X)_\red$.  Using Propositions \ref{bimer}
  and \ref{fubamero} we conclude that the action of $\C^*$ on $Z$ is
  meromorphic.  Hence every orbit has analytic closure \cite[Lemma 2.4
  (1)]{fujiki-automorphisms}.  Fix $z \in Z$. The closure of
  $\C^*\cd z$ contains a closed orbit, i.e. a fixed point.  So fixed
  points exists, hence $\C^*$ acts trivially on $\Alb Z$
  \cite{sommese-vector}.  By \cite[Prop. 2]{alan-wurzy} and
  \cite{oeljeklaus} we get that $b_1(Z) = 0$ and $Z$ is projective.
  It follows that $\bar{G}$ is Moishezon, since it is bimeromorphic to
  the projective manifold $Z$, see \cite[p. 305]{petermod}. But
  $\bar{G}$ is also K\"ahler. Being Moishezon and K\"ahler $\bar{G}$
  is in fact projective by \cite[p. 310]{petermod}.
  
\end{proof}

\begin{remark}
  It would be interesting to know if $\aus$ is projective for any
  $X\in \fuj$, without the K\"ahlerness assumption.
\end{remark}

\section{The action on the set of measures}
\label{misure}

If $X$ is a compact manifold, denote by $\misu(X)$ the vector space of
finite signed Borel measures on $X$ endowed with the weak topology.
Denote by $\proba(X) \subset \misu(X)$ the set of Borel probability
measures on $X$.

The following theorem is a generalization of the so-called Furstenberg
lemma, which corresponds to the case $X=\PP^n$, see
\cite{furstenberg-Borel}, \cite[IV]{zimmer-ENS},\cite[Lemma
3.2.1]{zimmer-book}

\begin{teo}\label{fursty-analitico}
  Let $X$ be a complex manifold in the class $\fuj$.  Let
  $\mu, \nu\in \proba(X)$ and let $\{g_n\} $ be a sequence in $\ai$,
  such that $g_n\cd \mu \weak \nu$.  Then either $\{g_n\}$ has compact
  closure in $\ai$ or $\nu$ is supported on a proper analytic subset
  of $X$.
\end{teo}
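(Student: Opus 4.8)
The plan is to prove the dichotomy by contraposition: assume $\{g_n\}$ does \emph{not} have compact closure in $\ai$, and show that then $\nu$ must be supported on a proper analytic subset of $X$. The non-compactness means $\{g_n\}$ is a divergent sequence, so I can invoke the machinery of Section \ref{autx0}. Passing to a subsequence, Theorem \ref{convergenza} produces a point $b\in\aus$ with $g_n\to f_b$ uniformly on compact subsets of $X-A_b$, where $A_b$ is a proper analytic subset. Moreover, since the closure of $\{g_n\}$ is non-compact, the limit point $b$ lies in $\B=\aus-\au$ rather than in $\au$; Theorem \ref{nondominant} then guarantees that $f_b$ is non-dominant, i.e. $f_b(X)$ is contained in a proper analytic subset $\Sigma:=\pi_2(Z_b)\subset X$. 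This $\Sigma$ is the candidate support for $\nu$.

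The heart of the argument is to transfer the convergence $g_n\to f_b$ on the level of measures to the weak convergence $g_n\cd\mu\weak\nu$ we are given. First I would fix a test function: let $\varphi\in C(X)$ and consider $\int_X \varphi\, d(g_n\cd\mu)=\int_X \varphi(g_n\cd x)\,d\mu(x)$. The plan is to split this integral over $A_b$ and its complement. Because $A_b$ is a proper analytic subset, it has measure zero for the smooth Lebesgue class, but $\mu$ is an arbitrary probability measure and could charge $A_b$; this is the first technical point to watch. On $X-A_b$ the pointwise (indeed locally uniform) convergence $g_n(x)\to f_b(x)$ holds, so $\varphi(g_n\cd x)\to\varphi(f_b(x))$ pointwise there. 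By dominated convergence (the integrand is bounded by $\sup|\varphi|$), the contribution from any compact subset of $X-A_b$ converges to $\int \varphi\circ f_b\,d\mu$. Combining with the identification of the limit $\nu$, I expect to conclude that $\nu=(f_b)_*(\mu\restr{X-A_b})$ plus possibly a contribution concentrated on the analytic set where convergence fails.

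The decisive structural input is that $f_b(X-A_b)\subset\Sigma=\pi_2(Z_b)$, which by Theorem \ref{nondominant} is a proper analytic subset. Hence the pushforward $(f_b)_*(\mu\restr{X-A_b})$ is supported on $\Sigma$. The remaining worry is the mass that $\mu$ places on $A_b$ and whether the limit measure $\nu$ receives any contribution there that escapes $\Sigma$. The clean way around this is to argue directly that $\supp\nu\subset\Sigma$: for any $\varphi\in C(X)$ vanishing on a neighbourhood of $\Sigma$, I want to show $\int\varphi\,d\nu=0$. Since $\Sigma$ is closed and $f_b(X-A_b)\subset\Sigma$, such a $\varphi$ satisfies $\varphi\circ f_b\equiv 0$ on $X-A_b$; the difficulty is controlling $\int_{A_b}\varphi(g_n\cd x)\,d\mu(x)$.

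I expect the main obstacle to be exactly this control over the indeterminacy/bad set $A_b$ under the fixed measure $\mu$. The natural remedy is to exploit the freedom in choosing the subsequence and the analytic subset more carefully: one can enlarge or vary $A_b$, or iterate the compactness argument on the sequence $\{g_n\}$ restricted to $A_b$, using that $A_b$ itself is a lower-dimensional analytic set so that an inductive argument on $\dim X$ becomes available. Alternatively, and more in the spirit of Furstenberg's original proof, I would consider the images $g_n\cd\Sigma'$ for suitable analytic subsets and use that the $g_n$ are biholomorphisms, so that $g_n\cd\mu$ cannot accumulate mass outside $\overline{f_b(X)}$ in the limit. The cleanest route is probably to show that for every open $U\supset\Sigma$ one has $\liminf_n (g_n\cd\mu)(U)=1$, i.e. the mass escapes into every neighbourhood of $\Sigma$ because $g_n(K)\subset U$ eventually for each compact $K\subset X-A_b$, and then handle the residual $\mu$-mass on $A_b$ by the same compactification applied to the action restricted to the analytic set $A_b\in\fuj$. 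Carrying out this localization near $A_b$ rigorously is the step I expect to require the most care.
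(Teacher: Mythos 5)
You have reproduced the paper's setup correctly: extract a subsequence converging to some $b\in\B$, invoke Theorems \ref{convergenza} and \ref{nondominant}, split $\mu$ into the part carried by $A_b$ and the part carried by $X-A_b$, and handle the latter by dominated convergence. That half is sound. But you have correctly located, and then not closed, the essential gap: the fate of the mass $\mu(A_b)$. None of the remedies you sketch works as stated. The claim that $g_n\cd\mu$ ``cannot accumulate mass outside $\overline{f_b(X)}$'' is false: if $\mu(A_b)>0$ the limit measure can genuinely charge points far from $\pi_2(Z_b)$ (already for $X=\PP^1$, $g_n(z)=n\cd z$ and $\mu$ with an atom at $0$, the atom stays at $0$ while $f_b\equiv\infty$), so in general $\supp(\nu)\not\subset\overline{f_b(X)}$ and your proposed statement $\liminf_n(g_n\cd\mu)(U)=1$ for every neighbourhood $U$ of $\Sigma$ fails. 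Likewise ``the same compactification applied to the action restricted to $A_b$'' does not parse: the $g_n$ are automorphisms of $X$ that do not preserve $A_b$, so there is no induced action on $A_b$ to which an induction on $\dim X$ could be applied.

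The paper closes the gap by applying the compactness of cycle space a second time, now in dimension $a_j=\dim A_j$ rather than $n$: for each irreducible component $A_j$ of $A_b$, the cycles $g_n\cd A_j$ all lie in a single irreducible component of $C_{a_j}(X)$, which is compact by Theorem \ref{C}; hence after a further subsequence $g_n\cd A_j\to\A_j$ as cycles, and convergence of cycles implies convergence of the supports in the Hausdorff topology of closed subsets. Writing $\mu=\mu_1+\mu_2$ with $\mu_1$ concentrated on $A_b$ and $\mu_2$ on $X-A_b$, any weak limit of $g_n\cd\mu_1$ is then supported on the analytic set $\A=\cup_j\A_j$ (test against $u\in C(X)$ with $\supp(u)\cap\A=\vacuo$: such $u$ vanishes on $g_n\cd A_b$ for large $n$), while the limit of $g_n\cd\mu_2$ is supported on $X^{\ai_b}$ by exactly the dominated-convergence argument you describe. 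The conclusion is $\supp(\nu)\subset\A\cup X^{\ai_b}$, a proper analytic subset --- not containment in $\overline{f_b(X)}$. This second use of Theorem \ref{C} is the missing idea in your proposal.
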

\begin{proof}
  If $\{g_n\}$ is divergent in $\ai$, we can extract a subsequence
  (that we still denote by $\{g_n\}$) converging to some $b\in \B$.
  By Theorems \ref{convergenza} and \ref{nondominant} we have
  \begin{enumerate}
  \item [a)] $g_n \to f_b$ uniformly on compact subset of $X -A_b$;
  \item [c)] $f_b (X-A_b) \subset A':=X^{\ai_b} \subsetneq X$.
  \end{enumerate}
  Let $A_j$ be the irreducible components of $A_b$ and set
  $a_j:=\dim A_j$. For any fixed $j$ the cycles $g_n \cd A_j$ belong -
  for any $n$ - to the same irreducible component of
  $C_{a_j}(X)$. These components are compact by Theorem \ref{C}, so by
  passing to a subsequence we can assume that $g_n \cd A_j \to \A_j$
  for any $j$ and for some $\A_j \in C_{a_j}(X)$. The convergence as
  cycles implies the analogous convergence as closed subset of the
  metric space $X$. \cite[Cor. 2.7.13
  p. 424]{barlet-magnusson-vol-1}. Hence, writing $\A:=\cup_j \A_j$,
  we have
  \begin{enumerate}
  \item [c)] $g_n\cd A \to \A$ in the Hausdorff topology of closed
    subsets.
  \end{enumerate}
  Write $\mu= \mu_1 + \mu_2$ with $\mu_1 (X-A) = \mu_2(A) = 0$. Since
  $\proba(X)$ is compact in the weak topology, up to passing to a
  subsequence we can assume that $g_n\cd \mu_1 \weak \nu_1$ and
  $g_n\cd \mu_2 \weak \nu_2$. Hence $\nu_1 + \nu_2 = \nu$.  We claim
  that
  \begin{enumerate}
  \item [d)] $\supp(\nu_1) \subset \A$;
  \item [e)] $\supp(\nu_2) \subset A'$.
  \end{enumerate}
  To prove (d) fix $u \in C(X)$ such that $\supp(u) \cap \A = \vacuo$.
  Then there is $\eps>0$ such that $\supp(u) \cap (\A)_\eps =
  \vacuo$. So $\supp(u) \cap (g_n \cd A) =\vacuo$ for large $n$.
  Now
  \begin{gather*}
    \int_X u\, d\nu_1 = \lim_{n\to \infty} \int _X u\, d (g_n \mu_1) =
    \lim_{n\to \infty} \int _X u(g_n \cd x)\, d \mu_1(x) ,
    \\
\text{and} \quad    \int _X u(g_n \cd x)\, d \mu_1(x) = \int _A u(g_n \cd x)\, d
    \mu_1(x),
  \end{gather*}
  since $\mu_1$ is concentrated on $A$.  For large $n$ the last
  integral vanishes, since $u$ vanishes on $g_n\cd A$.  This proves
  (d).

  To prove (e) fix $u\in C(X)$ with $\supp(u)\cap A' = \vacuo$.  As
  before
  \begin{gather*}
    \int_X u\, d\nu_2 = \lim_{n\to \infty} \int _X u(g_n \cd x)\, d
    \mu_2(x) .
  \end{gather*}
  By (a) we have $u(g_n\cd x) \to u (f_b(x))$ pointwise on $X-A$,
  hence $\mu_2$-a.e.  Since $u\in L^\infty$ we can apply Lebesgue
  Dominated Convergence Theorem to get
  \begin{gather*}
    \int_X u\, d\nu_2 =\int_X u(f_b(x)) \, d\mu_2 (x).
  \end{gather*}
  But $f_b(X-A) \subset A'$ by (b). Since $u\equiv 0 $ on $A'$, we
  conclude that $\int_X u\, d\nu_2 = 0$.  So (e) also is proven.  (d)
  and (e) together clearly imply that $\supp(\nu) \subset \A \cup A'$,
  so the theorem is proved.
\end{proof}

The following was already known in the special case $X=\PP^n$, see
\cite[Cor. 3.2.2, p. 39]{zimmer-book}.

\begin{cor}\label{corzimmer}
  If $\nu \in \pb$, then
  \begin{itemize}
  \item [i)] either $\nu$ is not supported on a proper analytic
    subset, in which case $\ai_{\nu}$ is compact;
  \item [ii)] or there is a proper irreducible analytic subset $Y$ of
    $X$ such that
    \begin{enumerate}
    \item [a)] $\nu(Y) >0$,
    \item [b)] the orbit
      $ O:= \ai_\nu \cd Y = \{ g\cd Y | g\in \ai_\nu\} $ is finite; in
      particular a finite subgroup of $\ai_\nu$ leaves $Y$ invariant.
    \end{enumerate}
  \end{itemize}
\end{cor}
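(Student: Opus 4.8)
The plan is to deduce Corollary \ref{corzimmer} directly from Theorem \ref{fursty-analitico} by a careful dichotomy on the group $\ai_\nu$, the stabilizer of $\nu$ in $\ai$. First I would apply Theorem \ref{fursty-analitico} with $\mu=\nu$ and any sequence $\{g_n\}\subset \ai_\nu$: by definition of the stabilizer we have $g_n\cd\nu=\nu$ for all $n$, so $g_n\cd\nu\weak\nu$ trivially, and the theorem yields that \emph{either} $\{g_n\}$ has compact closure \emph{or} $\nu$ is supported on a proper analytic subset. The key observation is that if $\ai_\nu$ were non-compact we could choose $\{g_n\}$ divergent, forcing $\supp(\nu)$ into a proper analytic subset. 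Contrapositively, if $\nu$ is \emph{not} supported on any proper analytic subset, then every sequence in $\ai_\nu$ has compact closure, i.e.\ $\ai_\nu$ is compact; this gives case (i).

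For case (ii) I would assume $\nu$ \emph{is} supported on a proper analytic subset $B\subsetneq X$ and extract the desired irreducible $Y$. Decompose $B$ into irreducible components; since $\nu(B)=1>0$, at least one component $Y$ has $\nu(Y)>0$, giving (a). The substance is in (b): controlling the orbit $O=\ai_\nu\cd Y$. Here I would use that $\ai_\nu$ permutes the analytic subsets on which $\nu$ is concentrated. Concretely, for each $g\in\ai_\nu$ the measure $g\cd\nu=\nu$ is supported on $g\cd B$ as well as on $B$, so $\nu$ lives on $B\cap (g\cd B)$; iterating over all $g$, the positive-mass components must be permuted among themselves. The finitely many irreducible components of $B$ of a given dimension carrying positive mass form a finite set that $\ai_\nu$ permutes, and I would argue that $Y$ can be chosen among a minimal such collection so that its $\ai_\nu$-orbit is finite.

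The main obstacle I anticipate is making rigorous the claim that the orbit $O=\ai_\nu\cd Y$ is finite. The difficulty is that a priori $\ai_\nu\cd Y$ could be an infinite family of translates of $Y$, each still carrying positive $\nu$-mass; one must rule this out using that $\nu$ is a finite measure. The clean way is a mass argument: if $g_1\cd Y,\dots,g_k\cd Y$ are distinct translates each of positive measure, their pairwise intersections are proper analytic subsets of each (hence lower-dimensional), so after subtracting overlaps one obtains disjoint pieces of total mass at most $1$; choosing $Y$ to be a component of minimal dimension among those with positive mass ensures the intersections $g_i\cd Y\cap g_j\cd Y$ have strictly smaller dimension and therefore carry no mass from the minimal-dimension stratum, making the distinct translates essentially disjoint. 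Since $\nu(g_i\cd Y)=\nu(Y)>0$ and the total mass is $1$, only finitely many distinct translates can occur, which is precisely the finiteness of $O$; the stabilizer of $Y$ in $\ai_\nu$ then has finite index, so a finite subgroup leaves $Y$ invariant. The remaining steps — invariance of the positive-mass stratum under $\ai_\nu$ and the reduction to minimal dimension — are routine once this counting principle is in place.
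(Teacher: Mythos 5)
Your overall strategy coincides with the paper's: part (i) is obtained exactly as you describe, by feeding a divergent sequence of $\ai_\nu$ into Theorem \ref{fursty-analitico}, and part (ii) rests on the same two ingredients, namely a minimal-dimension choice of $Y$ and the counting bound $k\cdot\nu(Y)\leq 1$ for $k$ pairwise essentially disjoint translates. However, there is a genuine gap in how you set up the minimality. You choose $Y$ of minimal dimension \emph{among the irreducible components of a fixed analytic set $B$ carrying $\nu$}, and then assert that $g_1\cd Y\cap g_2\cd Y$, being of dimension $<\dim Y$, ``therefore carries no mass''. That implication is not justified by your choice of $Y$: a lower-dimensional irreducible analytic subset of positive mass need not be a component of $B$, so your minimality hypothesis says nothing about it. Concretely, take $X=\PP^2$ and $\nu=\frac{1}{2}\lambda_L+\frac{1}{2}\delta_p$, where $\lambda_L$ is the uniform measure on a line $L$ and $p\in L$; then $B=L$ is a legitimate choice, its only component is $Y=L$, and yet the point $\{p\}$, of dimension $0<\dim Y$, carries mass $\frac{1}{2}$. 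So ``strictly smaller dimension'' does not imply ``zero mass'' for your $Y$, and the pairwise $\nu$-disjointness underlying the bound $k\cdot\nu(Y)\leq 1$ is not established. The paper avoids this by taking $Y$ of minimal dimension among \emph{all} proper irreducible analytic subsets of positive $\nu$-measure, not just among the components of $B$: with that choice, any positive-mass irreducible component $Z$ of $g_1\cd Y\cap g_2\cd Y$ would itself be a positive-mass proper irreducible analytic subset with $\dim Z<\dim Y$, contradicting minimality, so distinct translates really do overlap in $\nu$-null sets and the counting goes through.

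A secondary, less consequential issue: your intermediate claim that $\ai_\nu$ ``permutes the positive-mass components of $B$ among themselves'' is unjustified as stated --- for $g\in\ai_\nu$ the set $g\cd Y$ is again irreducible with $\nu(g\cd Y)=\nu(Y)>0$, but it need not be a component of $B$ (it could sit strictly inside a larger component or meet $B$ in a smaller set). Fortunately this claim is not needed: the argument only uses $\nu(g\cd Y)=\nu(Y)$ and the pairwise null intersections. With the corrected choice of $Y$ your proof becomes exactly the paper's proof.
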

\begin{proof}
  If $\nu$ is not supported on a proper analytic subset, the previous
  theorem implies that $\ai_\nu$ is compact. Thus (i) is clear.  If
  $\nu$ is supported on a proper analytic subset, then there are
  proper irreducible analytic subsets with $\nu(Y) >0$. Take $Y$ to be
  one of minimal dimension. If $g_1 \cd Y $ and $g_2 \cd Y$ are
  distinct elements of $O$, then $\nu (g_1 \cd Y \cap g_2 \cd Y)
  =0$. Otherwise some irreducible component $Z$ of this intersection
  has positive measure and $\dim Z < \dim Y$. Moreover since
  $g_ i\in \ai_\nu$ we have $\nu (g_1 \cd Y ) = \nu (g_2 \cd
  Y)$. Since $\nu(X) = 1$ the orbit must be finite. The rest is clear.
\end{proof}

\begin{remark}
  We remark that in fact one might expect a better result: linear
  subspaces of $\PP^n$ can be characterized as fixed sets of subgroups
  of $\operatorname{PGL}(n+1,\C)$. So one might ask if the support of
  a measure with non-compact stabilizer is in fact contained in the
  fixed set of a proper subgroup of $\ai$. We leave this point for
  further inquiry.
\end{remark}

  Another application concernes the construction of Hersch and
  Bour\-gui\-gnon-Li-Yau that we now recall briefly, see \cite[\S\S
  5-6]{bgs} for more details.  Let $X$ be a compact K\"ahler manifold
  and let $K$ be a compact connected Lie group acting almost
  effectively on $X$ with momentum mapping
  \begin{gather*}
    \mu : M \ra \liek^*.
  \end{gather*}
  If $v\in \liek$, set $\mu^v :=\sx \mu, v\xs $.  Then $\mu$ is
  $K$-equivariant and $d\mu^v = i_{v_M}\om$.  The action of $K$
  extends to a holomorphic action of the complexification $G:=K^\C$.
  Define $\fun : \pb \ra \liek^*$ by the formula
  \begin{gather*}
    \fun (\nu) : = \int_M \mu(x) d\nu(x).
  \end{gather*}
  As explained in \cite{bgs} this map is a momentum mapping for the
  action of $K$ on $\pb$, in an appropriate sense.

  Let $\convo$ denote the convex hull of $\mu(M) \subset \liek^*$ and
  let $\intec $ denote the interior of $\convo$ as a subset of
  $\liek^*$.  Finally set
  \begin{gather*}
    \bly_\nu : G \lra \liek^*, \quad \bly_\nu(a): = \fun (a\cd \nu).
  \end{gather*}
  The following  should be compared to Theorem 6.14 in \cite{bgs}.

\begin{teo}\label{propria}
  Fix $\nu \in \pb$ and assume that $\nu(A) = 0$ for any proper
  analytic subset $A$ of $X$.  Then $F_\nu(G) = \into$ and
  $F_\nu : G\ra \into$ is a fibration with compact connected fibres.
\end{teo}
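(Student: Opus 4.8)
The plan is to recognise $F_\nu$ as the restriction to the orbit $G\cd\nu\subset\pb$ of the momentum map $\fun$, i.e. $F_\nu=\fun\comp(a\mapsto a\cd\nu)$, and to run the standard gradient-map / Kempf--Ness machinery; the one genuinely new ingredient, which makes everything go through, is that the hypothesis on $\nu$ forces the stabiliser to be compact.

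First I would record the basic convexity identity. Writing $\xi_v$ for the fundamental vector field of $v\in\liek$ and $\mu^v:=\sx\mu,v\xs$, the momentum condition $d\mu^v=i_{\xi_v}\om$ gives $\grad\mu^v=J\xi_v$, whence $\frac{d}{dt}\mu^v(\exp(itv)\cd x)=\|\xi_v(\exp(itv)\cd x)\|^2$. Integrating against $\nu$,
\begin{gather*}
  \frac{d}{dt}\sx F_\nu(\exp(itv)),v\xs=\int_X\|\xi_v(\exp(itv)\cd x)\|^2\,d\nu(x)\geq 0 .
\end{gather*}
Since $v\neq0$ makes $\{\xi_v=0\}$ a proper analytic subset (the action is almost effective) and $\nu$ charges no such subset, this derivative is strictly positive for $v\neq0$; equivalently the Kempf--Ness function $\kappa_\nu$ on $G/K$ is strictly geodesically convex. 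This is the engine of the proof.

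I would then identify the image. As $F_\nu(a)$ is the barycentre of the probability measure $\mu_*(a\cd\nu)$ supported in $\mu(X)$, it lies in $\convo=\overline{\into}$. To exclude the boundary, a value $F_\nu(a)\in\partial\convo$ would sit on a supporting hyperplane $\sx\cdot,v\xs=\max_X\mu^v$ with $v\neq0$, forcing $a\cd\nu$ to concentrate on $\{\mu^v=\max_X\mu^v\}\subset\{\xi_v=0\}$, a proper analytic subset; but $a\cd\nu$ charges no such set, a contradiction, so $F_\nu(G)\subset\into$. For surjectivity fix $p\in\into$ and minimise $\kappa_\nu^p:=\kappa_\nu-\sx p,\cd\xs$ on $G/K$: along each geodesic ray $t\mapsto\exp(itv)$ one has $\sx F_\nu(\exp(itv)),v\xs\to\max_X\mu^v$ (mass is swept onto the maximum set, off a $\nu$-null analytic set), so $\frac{d}{dt}\kappa_\nu^p\to\max_X\mu^v-\sx p,v\xs>0$ because $p$ is interior. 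With convexity this makes $\kappa_\nu^p$ coercive, a minimiser $a$ exists, and there $\sx F_\nu(a)-p,v\xs=0$ for all $v$, i.e. $F_\nu(a)=p$.

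The coercivity step is the main obstacle and the place where the hypothesis is used: a minimising sequence could in principle escape to infinity in $G/K$. Here I would invoke Corollary \ref{corzimmer}: as $\nu$ is supported on no proper analytic subset, $\ai_\nu$ is compact, hence so is $G_\nu=G\cap\ai_\nu$; in particular $\exp(i\liek)\cap G_\nu=\{e\}$, so no geodesic direction of $G/K$ stabilises $\nu$ and the growth estimate holds along every ray. Compactness of $G_\nu$ also makes the orbit map $a\mapsto a\cd\nu$, and hence $F_\nu$, proper. Finally, strict convexity makes $dF_\nu$ non-degenerate in the $\exp(i\liek)$-directions, which together with the coadjoint directions coming from the $K$-equivariance $F_\nu(ka)=\Ad^*(k)F_\nu(a)$ shows $F_\nu$ is a submersion; a proper submersion is a locally trivial fibration by Ehresmann, with compact fibres. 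Connectedness of the fibres is then automatic from the homotopy exact sequence, since the base $\into$ is convex (so simply connected) and $G$ is connected.
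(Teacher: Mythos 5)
Your overall strategy is the Kempf--Ness/convexity route: strict convexity of the integrated momentum along geodesics of $G/K$, coercivity, and a minimisation argument. This is essentially the method of \cite{bgs}, which the paper deliberately bypasses: it takes the submersion statement and the properness of $\bly_{\nuo}$ for a \emph{smooth} measure $\nuo$ as known from \cite{bgs} (Theorems 6.4 and 6.14 there), and its only new step is to transfer properness from $\nuo$ to $\nu$ by sending a divergent sequence $g_n$ to a boundary point $b\in\partial G$ of the cycle-space compactification, using the uniform convergence $g_n\to f_b$ off $A_b$ (Theorem \ref{convergenza}), dominated convergence for both $\nuo$ and $\nu$ (this is where the hypothesis $\nu(A_b)=0$ enters), and the exposed-faces theorem of \cite{bgh-israel-p} to produce the supporting functional $v$. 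So your route is genuinely different, and parts of it are correct and even self-contained: the positivity of $\int_X\|\xi_v\|^2\,d\nu$ because $\{\xi_v=0\}$ is a proper analytic set, and the resulting surjectivity of $dF_\nu$ in the $i\liek$-directions, do give the submersion property directly.

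The genuine gap is the properness step, which is the crux of the theorem. The claim that compactness of $G_\nu$ makes the orbit map $a\mapsto a\cd\nu$, and hence $F_\nu$, proper is false: for $G=\C^*$ acting on $\PP^1$ with $\nu$ the Fubini--Study measure, $G_\nu=S^1$ is compact, yet $n\cd\nu\weak\delta_\infty$, so the orbit map $G\to\pb$ is not proper (note that $F_\nu:G\ra\into$ \emph{is} proper in this example, but only because the limit barycentre $\mu(\infty)$ lies on $\partial\convo$ --- which is exactly the statement that has to be proved in general). The correct argument is hiding in your coercivity computation, but two points must be supplied: (a) the limit $\sx F_\nu(\exp(itv)),v\xs\to\max_X\mu^v$ requires that $\nu$-almost every point flows under $\exp(itv)$ into the \emph{global} maximum component of $\mu^v$; this uses the Morse--Bott stratification of \cite{bgs} together with the connectedness theorem guaranteeing that local maxima of $\mu^v$ are global, and is not a one-line consequence of the hypothesis; (b) both coercivity and properness need this ray-wise statement made uniform over a divergent sequence $g_n=k_n\exp(it_nv_n)$, via the Cartan decomposition, a subsequence with $v_n\to v$ on the unit sphere, monotonicity of the slope in $t$, and continuity of $v\mapsto\max_X\mu^v$. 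With (a) and (b) supplied your proof closes, but as written the properness of $F_\nu:G\ra\into$ --- the one statement the paper's new machinery is designed to deliver --- is not established.
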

\begin{proof}
  By Corollary \ref {corzimmer} (i) the stabilizer $\ai_\nu$ is
  compact, so also $G_\nu$ is compact.  Therefore Theorem 6.4 in
  \cite{bgs} implies that the map $F_\nu$ is a smooth submersion onto
  its image, which is an open subset of $\intec$.  To conclude it is
  enough to check that $F_\nu$ is proper as a map $G \ra \intec$ (see
  \cite[p. 1140]{bgs} for details).  Let $\{g_n\}$ be a diverging
  sequence in $G$. Since $\convo$ is compact, we can assume that
  $F_\nu(g_n) \to \xi \in \convo$.  We have to prove that
  $\xi \in \partial \convo$.  If $\bar{G}$ denotes the closure of $G$
  in $\aus$ (which is compact), we can also assume $g_n \to b $ for
  some $b\in \partial G$. Let $\nuo$ be a fixed smooth probability
  measure, i.e. a measure given by a smooth volume form which vanishes
  nowhere.  By Theorem 6.14 of \cite{bgs} (see also Definition 5.27 in
  that paper) the map $\bly_{\nuo} : G \lra \Omega(\mu)$,
  $ \bly_\nuo(a): = \fun (a\cd \nuo)$ is proper.  Therefore up to
  passing to a subsequence we can assume that $\bly_{\nuo } (g_n) $
  converges to some point $ \theta \in \partial E (\mu)$.  And by
  Theorem 0.3 in \cite{bgh-israel-p}, the convex body $E(\mu)$ has the
  property that all its faces $\overline{\Omega(\mu)}=\convo$ are
  exposed (see \cite[p. 426]{bgh-israel-p} for the
  definitions). Therefore there exists a $v\in \liek$, such that
  $v\neq 0$ and
  $\langle \theta,v \rangle= \max_{\convo} \sx \cd , v \xs$.  On the
  other hand Theorem \ref{convergenza} we have pointwise convergence
  $g_n \to f_b(x)$ on $X-A_b$. Since $\nuo (A_b)=0$ and
  $\mu^v := \sx \cd, v\xs$ is bounded, the dominated convergence
  theorem yields
  \begin{gather*}
    \sx \bly_{\nuo} (g_n), v\xs = \int_{X} \mu^v(g_n \cd x) d\nuo (x)
    \to \int_{X} \mu^v(f_b( x)) d\nuo (x).
  \end{gather*}
  On the other hand
  $\sx \bly_{\nuo} (g_n), v\xs \to \sx \theta , v\xs$.  Thus
  \begin{gather*}
    \int_{X} \mu^v(f_b( x)) d\nuo (x) = \sx \theta, v\xs =
    \max_{\convo} \sx \cd , v \xs = \max_X \mu^v.
  \end{gather*}
  This shows that the equality $\mu^v \circ f_b = \max _X\mu^v$ holds
  $\nuo$-almost everywhere on $X-A_b$. Since this function is
  continuous, the equality holds in fact everywhere on $X-A_b$.
  But since $\nu(A_b) =0$ by assumption, we can redo this computation
  with $\nu$ instead of $\nuo$:
  \begin{gather*}
    \sx \bly_{\nu} (g_n), v\xs q = \int_{X-A_b} \mu^v(g_n \cd x) d\nu
    (x) \to \int_{X-A_b} \mu^v(f_b( x)) = \max_X\mu^v.
  \end{gather*}
  Summing up we get
  $\sx \xi , v \xs = \max_\mu^v = \max _{\convo} \sx \cd , v
  \xs$. Therefore $\xi$ (just as $\theta$) lies in the face
  $F_v (\convo)$. In particular $\xi \in \partial \convo$.
\end{proof}

\def\cprime{$'$}


\begin{thebibliography}{10}

\bibitem{akhiezer-libro} D.~N. Akhiezer.  \newblock {\em Lie group
    actions in complex analysis}.  \newblock Aspects of Mathematics,
  E27. Friedr. Vieweg \& Sohn, Braunschweig, 1995.

\bibitem{andreotti-norguet-67} A.~Andreotti and F.~Norguet.  \newblock
  La convexit\'e holomorphe dans l'espace analytique des cycles d'une
  vari\'et\'e alg\'ebrique.  \newblock {\em Ann. Scuola
    Norm. Sup. Pisa}, (3), 21, 1967, 31--82.
\bibitem{barlet-482} D.~Barlet.  \newblock Espace analytique
  r\'{e}duit des cycles analytiques complexes compacts d'un espace
  analytique complexe de dimension finie.  \newblock pages
  1--158. Lecture Notes in Math., Vol. 482, 1975.

\bibitem{barlet-magnusson-vol-1} D.~{Barlet} and J.~{Magn\'usson}.
  \newblock {\em {Cycles analytiques complexes I}: {Th\'eor\`emes de
      pr\'eparation des cycles .}}  \newblock Paris: Soci\'et\'e
  Math\'ematique de France (SMF), 2014.

\bibitem{barlet-varouchas} D.~{Barlet} and J.~{V}arouchas.  \newblock
  {\em Fonctions holomorphes sur l'espace des cycles}. \newblock
  Bulletin de la S.M.F., 117(3):327-341, 1989.

\bibitem{bgAmer} L.~Biliotti and A.~Ghigi.  \newblock
  Satake-{F}urstenberg compactifications, the moment map and
  {$\lambda_1$}.  \newblock {\em Amer. J. Math.}, 135(1):237--274,
  2013.

  
\bibitem{bgs} L.~Biliotti and A.~Ghigi.  \newblock Stability of
  measures on {K}\"ahler manifolds.  \newblock {\em Adv. Math.},
  307:1108--1150, 2017.

\bibitem{bgh-israel-p} L.~Biliotti, A.~Ghigi, and P.~Heinzner.
  \newblock Invariant convex sets in polar representations.  \newblock
  {\em Israel J. Math.}, 213:423--441,2016.

\bibitem{bing} J.~Bingener.  \newblock On deformations of K\"ahler
  spaces. I.  \newblock {\em Math. Z.} 182 (1983), no. 4, 505--535.


  
\bibitem{bly} J.-P. Bourguignon, P.~Li, and S.-T. Yau.
  \newblock Upper bound for the first eigenvalue of algebraic
  submanifolds.  \newblock {\em Comment. Math. Helv.}, 69(2):199--207,
  1994.

  
\bibitem{brion-completions} M.~Brion.  \newblock Group completions via
  {H}ilbert schemes.  \newblock {\em J. Algebraic Geom.},
  12(4):605--626, 2003.

\bibitem{campana-algebricite} F.~Campana.  \newblock Alg\'ebricit\'e
  et compacit\'e dans l'espace des cycles d'un espace analytique
  complexe.  \newblock {\em Math. Ann.}, 251(1):7--18, 1980.

\bibitem{open-demailly} J.P.~Demailly.  \newblock Complex Analytic and
  Differential Geometry.  \newblock  \texttt{http://www-fourier.ujf-grenoble.fr/}\texttt{$\tilde{\phantom{e}}$demailly},
  2012.

\bibitem{deconcini-procesi}
C.~De~Concini and C.~Procesi.
\newblock Complete symmetric varieties.
\newblock In {\em Invariant theory ({M}ontecatini, 1982)}, volume 996 of {\em
  Lecture Notes in Math.}, pages 1--44. Springer, Berlin, 1983.


  
\bibitem{douady-modules} A.~Douady.  \newblock Le probl\`eme des
  modules pour les sous-espaces analytiques compacts d'un espace
  analytique donn\'e.  \newblock {\em Ann. Inst. Fourier (Grenoble)},
  16(fasc. 1):1--95, 1966.

\bibitem{fritzsche-grauert} K.~Fritzsche and H.~Grauert.  \newblock
  {\em From holomorphic functions to complex manifolds}.  \newblock
  Springer-Verlag, New York, 2002.

\bibitem{fujiki-automorphisms} A.~Fujiki.  \newblock On automorphism
  groups of compact {K}\"ahler manifolds.  \newblock {\em
    Invent. Math.}, 44(3):225--258, 1978.

\bibitem{fujiki-closedness} A.~Fujiki.  \newblock Closedness of the
  {D}ouady spaces of compact {K}\"ahler spaces.  \newblock {\em
    Publ. Res. Inst. Math. Sci.}, 14(1):1--52, 1978/79.

\bibitem{fujiki-Douady} A.~Fujiki.  \newblock On the {D}ouady space of
  a compact complex space in the category {${\mathscr C}$}.  \newblock
  {\em Nagoya Math. J.}, 85:189--211, 1982.

\bibitem{fulton} W.~Fulton.  \newblock {\em Intersection theory}.
  \newblock Springer-Verlag, Berlin, second edition, 1998.

\bibitem{furstenberg-Borel} H.~Furstenberg.  \newblock A note on
  {B}orel's density theorem.  \newblock {\em Proc. Amer. Math. Soc.},
  55(1):209--212, 1976.

  \bibitem{grauert-remmert-cas} H.~Grauert and R.~Remmert.  \newblock
  {\em Coherent analytic sheaves}.  \newblock Springer-Verlag,
  Berlin, 1984.
\bibitem{hl} F.R.~Harvey and H.B.~Lawson.  \newblock Finite volume
  flows and Morse theory.  \newblock {\em Ann. of Math.} (2) 153
  (2001), no. 1, 1-25.

  
\bibitem{heinzner-huckleberry-Inventiones} P.~Heinzner and
  A.~Huckleberry.  \newblock K\"ahlerian potentials and convexity
  properties of the moment map.  \newblock {\em Invent. Math.},
  126(1):65--84, 1996.
  
\bibitem{heinzner-loose} P.~Heinzner and F.~Loose.  \newblock
  Reduction of complex {H}amiltonian {$G$}-spaces.  \newblock {\em
    Geom. Funct. Anal.}, 4(3):288--297, 1994.

\bibitem{heinzner-schwarz-Cartan} P.~Heinzner and G.~W. Schwarz.
  \newblock Cartan decomposition of the moment map.  \newblock {\em
    Math. Ann.}, 337(1):197--232, 2007.
 
\bibitem{alan-wurzy} A.T.~Huckleberry and T.~Wurzbacher.  \newblock
  Multiplicity-free complex manifolds.  \newblock {\em Math. Annalen},
  286(1-3):261-280, 1990.

  
\bibitem {iva} S.~Ivashkovich. \newblock On convergence properties of
  meromorphic function and mappings (Russian). \newblock In: {\em
    Complex Analysis in Modern Mathematics, B. Shabat memorial
    volume}. \newblock FAZIS, Moscow, 2001, 133-151. \newblock See
  also \texttt{arXiv:math/9804009}.
  
\bibitem{in} S.~Ivashkovich and F.~Neji.  \newblock Weak normality of
  families of meromorphic mappings and bubbling in higher
  dimensions. \newblock {\em Ann. Sc. Norm. Super. Pisa Cl. Sci.} (5)
  14 (2015), no. 3, 841--880.
  
\bibitem{kaup} B.~Kaup.  \newblock \"Uber offene analytische
  \"Aquivalenzrelationen auf komplexe R\"aume.  \newblock {\em
    Math. Annalen}, 183:6-16, 1969.

\bibitem{kobayashi-trans}
S.~Kobayashi.
\newblock {\em Transformation groups in differential geometry}.
\newblock Springer-Verlag, New York, 1972.
\newblock Ergebnisse der Mathematik und ihrer Grenzgebiete, Band 70.

\bibitem{kollar-resolution}
J.~Koll\'ar.
\newblock {\em Lectures on Resolution of Singularities}.
\newblock Princeton University Press, Princeton, 2007.



  
\bibitem{latschev} J.~Latschev.  \newblock Gradient flows of
  Morse-Bott functions.  \newblock {\em Math. Annalen},
  318(4):731-759, 2000.

\bibitem{lieberman-automorphisms} D.~I. Lieberman.  \newblock
  Compactness of the {C}how scheme: applications to automorphisms and
  deformations of {K}\"ahler manifolds.  \newblock In {\em Fonctions
    de plusieurs variables complexes, {III} ({S}\'em.  {F}ran\c cois
    {N}orguet, 1975--1977)}, volume 670 of {\em Lecture Notes in
    Math.}, pages 140--186. Springer, Berlin, 1978.


\bibitem{neretin2} Y.~A. Neretin. \newblock Geometry of
  {${\rm GL}_n({\Bbb C})$} at infinity: hinges, complete
  collineations, projective compactifications, and universal
  boundary.  \newblock In {\em The orbit method in geometry and
  physics}, pages 297--327, Birkh\"{a}user Boston, Boston, MA, 2003.
  
\bibitem{neretin}
Y.~A. Neretin.
\newblock Hinges and the {S}tudy-{S}emple-{S}atake-{F}urstenberg-{D}e
  {C}oncini-{P}rocesi-{O}shima boundary.
\newblock In {\em Kirillov's seminar on representation theory}, volume 181 of
  {\em Amer. Math. Soc. Transl. Ser. 2}, pages 165--230. Amer. Math. Soc.,
  Providence, RI, 1998.

\bibitem {neretin-libro} Y.~A. Neretin. \newblock \newblock {\em
    Categories of symmetries and infinite-dimensional
    groups}. \newblock Oxford University Press, 1996.
  
 \bibitem{noguchi-ochiai} J.~Noguchi and T.~Ochiai.  \newblock {\em
     Geometric function theory in several complex variables}, volume~80
   of {\em Translations of Mathematical Monographs}.  \newblock
   American Mathematical Society, Providence, RI, 1990.  \newblock
   Translated from the Japanese by Noguchi.

\bibitem{oeljeklaus} E.~Oeljeklaus.  \newblock Fasthomogene
  K\"ahlermannigfaltigkeiten mit verschwindender erster
  Bettizahl. \newblock{\em Manuscripta Mathematica}, 7(2):175-183,
  1972.

\bibitem{petermod} T.~{Peternell}. \newblock{Modifications.}
  \newblock In {\em {Several complex variables VII. Sheaf-theoretical
      methods in complex analysis}}, pages 285-317. Berlin:
  Springer-Verlag, 1994.

\bibitem{peternell-differential} T.~{Peternell} and R.~{Remmert}.
  \newblock {Differential calculus, holomorphic maps and linear
    structures on complex spaces.}  \newblock In {\em {Several complex
      variables VII. Sheaf-theoretical methods in complex analysis}},
  pages 97--144. Berlin: Springer-Verlag, 1994.

\bibitem{sommese-vector}
A.~J. Sommese.
\newblock Holomorphic vector-fields on compact {K}aehler manifolds.
\newblock {\em Math. Ann.}, 210:75--82, 1974.

  
\bibitem{ueno-class-C} K.~Ueno.  \newblock Introduction to the theory
  of compact complex spaces in the class {${\mathscr C}$}.  \newblock
  In {\em Algebraic varieties and analytic varieties ({T}okyo, 1981)},
  volume~1 of {\em Adv. Stud. Pure Math.}, pages
  219--230. North-Holland, Amsterdam, 1983.

\bibitem{varouchas-image} J.~Varouchas.  \newblock Sur l'image d'une
  vari\'et\'e k\"ahl\'erienne compacte.  \newblock In {\em Fonctions
    de Plusieurs Variables Complexes V}, pp. 245--259, LNM 1188, 1986.

\bibitem{zimmer-ENS} R.~J. Zimmer.  \newblock Induced and amenable
  ergodic actions of {L}ie groups.  \newblock {\em Ann. Sci. \'Ecole
    Norm. Sup. (4)}, 11(3):407--428, 1978.

\bibitem{zimmer-book} R.~J. Zimmer.  \newblock {\em Ergodic theory and
    semisimple groups}, volume~81 of {\em Monographs in Mathematics}.
  \newblock Birkh\"auser Verlag, Basel, 1984.




\end{thebibliography}
\end{document}